\documentclass[12pt]{article}
\usepackage[a4paper,margin=2.5cm]{geometry}

\usepackage{amsmath}
\usepackage{amsthm}
\usepackage{amssymb}

\usepackage{tikz}
\usetikzlibrary{arrows.meta, decorations.markings, calc}

\tikzset{
  mid arrow/.style={
    postaction={decorate,
      decoration={markings,
        mark=at position #1 with {\arrow[scale=1.3]{Stealth}}
      }
    }
  },
  mid arrow/.default=0.55
}

\usepackage[labelsep=newline]{caption}
\usepackage{subcaption}

\usepackage[hidelinks]{hyperref}
\hypersetup{
  pdftitle={Whitham Deformations of Nodal Spectral Curves of the Sinh-Gordon Equation},
  pdfauthor={Ross Ogilvie},
  pdfsubject={Mathematics Subject Classification (2020): 14H15, 14H70, 37K20, 53A10, 30F30, 14H20},
  pdfkeywords={spectral curves, Whitham deformations, nodal curves, isoperiodic sets, sinh-Gordon equation, Willmore energy},
}

\newcommand{\labelthis}[1]{\stepcounter{equation}\tag{\theequation}\label{#1}}

\numberwithin{equation}{section}

\newtheorem{theorem}{Theorem}[section]
\newtheorem{corollary}[theorem]{Corollary}
\newtheorem{lemma}[theorem]{Lemma}

\theoremstyle{definition}
\newtheorem{definition}[theorem]{Definition}
\newtheorem{remark}[theorem]{Remark}
\newtheorem{example}[theorem]{Example}

\DeclareMathOperator{\Real}{Re}
\DeclareMathOperator{\Imag}{Im}
\DeclareMathOperator*{\Res}{Res}

\DeclareMathOperator{\Hom}{Hom}

\DeclareMathOperator{\GL}{GL}

\DeclareMathOperator{\Perm}{Perm}

\DeclareMathOperator{\Gr}{Gr}
\newcommand{\CP}{\bbC\bbP}
\newcommand{\RP}{\bbR\bbP}

\DeclareMathOperator{\Per}{Per}
\DeclareMathOperator{\APer}{APer}
\DeclareMathOperator{\BPer}{BPer}
\DeclareMathOperator{\CA}{CA}
\newcommand{\willmore}{\calW}

\newcommand{\ci}{i}
\newcommand{\dd}{\mathrm{d}}

\newcommand{\bbC}{\mathbb{C}}

\newcommand{\bbF}{\mathbb{F}}

\newcommand{\bbN}{\mathbb{N}}

\newcommand{\bbP}{\mathbb{P}}
\newcommand{\bbQ}{\mathbb{Q}}
\newcommand{\bbR}{\mathbb{R}}
\newcommand{\bbS}{\mathbb{S}}

\newcommand{\bbZ}{\mathbb{Z}}

\newcommand{\calB}{\mathcal{B}}

\newcommand{\calF}{\mathcal{F}}

\newcommand{\calH}{\mathcal{H}}

\newcommand{\calN}{\mathcal{N}}

\newcommand{\calP}{\mathcal{P}}

\newcommand{\calS}{\mathcal{S}}

\newcommand{\calU}{\mathcal{U}}

\newcommand{\calW}{\mathcal{W}}

\parskip = \baselineskip

\newcommand{\refinv}[2]{#2 of~\cite{Hasse2026a}}
\newcommand{\eqrefinv}[2]{(#2) of~\cite{Hasse2026a}}
\newcommand{\numinv}[2]{#2}

\title{Whitham Deformations of Nodal Spectral Curves of the Sinh-Gordon Equation}
\author{Ross Ogilvie}
\date{}

\begin{document}
\maketitle
\begin{NoHyper}\renewcommand{\thefootnote}{}\footnotetext{Date: \today.\\Mathematics Subject Classification (2020): Primary 14H15; Secondary 14H70, 37K20, 53A10, 30F30, 14H20.}\end{NoHyper}
\begin{abstract}
Spectral curves of constant mean curvature tori in Euclidean three-space are real hyperelliptic curves carrying two meromorphic differentials with integral periods.
Deformations that preserve the periods, called Whitham deformations, are well understood for nonsingular curves, where the isoperiodic sets are manifolds.
We extend Whitham deformations to curves with ordinary double points that are fixed by the real involution.
At such curves the isoperiodic sets remain manifolds and are transverse to the curves with double points, so Whitham deformations can change the genus of the spectral curve.
We describe how the double points open, and when the derivative of the normalized Willmore energy vanishes at a nodal curve.
By inserting double points, we show that every value of a discrete invariant of the isoperiodic sets found in previous work occurs.
\end{abstract}

\section{Introduction}

Opening a double point is a natural way to raise the genus of a hyperelliptic curve.
The metric of a constant mean curvature (CMC) torus in $\bbR^3$ is given by a doubly periodic solution of the sinh-Gordon equation, an integrable equation, and such solutions are encoded by spectral curves: real hyperelliptic curves carrying two meromorphic differentials with periods in $2\pi\ci\bbZ$~\cite{Pinkall1989,Hitchin1990,Bobenko1991}.
For these spectral curves, opening double points suggests a way to produce new examples from known ones: insert a double point into a curve of lower genus, and open it while preserving the periods.
This idea has been used to prove the existence of CMC tori of every spectral genus~\cite{Ercolani1993,Jaggy1994,Kewlin2015thesis,Carberry2016a}, and to study moduli spaces of periodic solutions of integrable equations~\cite{Grinevich1995,Grinevich1997arxiv,Kilian2015,Heller2021a}.
We show that the period-preserving deformations of these curves, called Whitham deformations, in the form developed in~\cite{Hasse2026a}, pass smoothly through curves with any number of double points on the unit circle (Theorem~\ref{thm:Whitham deformation node}).
The isoperiodic sets are manifolds at these curves and transverse to them, so the double points open along Whitham deformations and the genus changes.

The best one could hope for is that the isoperiodic sets extend as manifolds transversally through the set of curves with double points.
For nonsingular curves the isoperiodic sets were constructed in~\cite{Hasse2026a}, where we generalized Whitham deformations to all nonsingular real hyperelliptic curves of CMC type.
There one preserves the periods of the plane $\calB_a$ of differentials with purely imaginary periods, which makes sense for every such curve, not only for the spectral curves of tori.
The isoperiodic sets $\calH^g_0(a)$ are then manifolds of dimension $3$, and the isoperiodic sets $\calH^g_0(a,\tau)$ that additionally preserve a complex structure on $\calB_a$ are manifolds of dimension $1$.

We extend this to nodal curves by enlarging $\calH^g_0$ to a set $\calU^g$ that contains the curves with double points on $\bbS^1$, in which those with $n$ double points form a stratum of codimension $n$, and over which $\calB_a$ and its periods are smooth (Section~\ref{sec:extending singular}).
The main difficulty is that at a nodal curve the Whitham equations no longer determine the tangent directions to the isoperiodic set, because they do not see the periods along the cycles through the double points.
We compute the derivatives of these nodal periods (Lemma~\ref{lem:nodal B period derivative}), which add conditions at each node to the Whitham equations (Lemma~\ref{lem:whitham kernel node}).
Under a nondegeneracy condition at the nodes, this best case holds: the isoperiodic sets $\calU^g(a)$ and $\calU^g(a,\tau)$ are submanifolds of dimensions $3$ and $1$, transverse to the strata (Theorem~\ref{thm:Whitham deformation node}).
We also describe how each double point opens, into a conjugate pair of branch points or into two branch points on $\bbS^1$, and find that the possible patterns of opening are rigid.


In the doubly periodic case, the isoperiodic sets of~\cite{Hasse2026a} carry a discrete invariant, a triple of integers $(n_1,n_2,n_3)$, and a continuous invariant $\tau$.
These invariants extend to nodal curves, and we use this to show that every permissible triple $(n_1,n_2,n_3)$ occurs (Theorem~\ref{thm:all triples}).
The proof inserts one node at a time into a doubly periodic curve and opens it along the isoperiodic set, an induction on the genus in the spirit of Jaggy's.

The questions that opening double points has been used to answer are of three kinds.
The first is existence: for example, are there constant mean curvature tori of every spectral genus?
Ercolani, Kn\"orrer and Trubowitz~\cite{Ercolani1993} answered it for even genus, and Jaggy~\cite{Jaggy1994} for every $g\geq2$, by the inverse function theorem near a curve with one double point on $\bbS^1$.
For CMC tori in $\bbS^3$, Carberry and Schmidt~\cite{Carberry2016a} settle existence by an induction on the genus that opens a double point on $\bbS^1$ at each step.
The second kind of question is the shape of the moduli space, which includes finding the minima of natural functionals on it.
For the nonlinear Schr\"odinger equation, Grinevich and Schmidt~\cite{Grinevich1995} describe the moduli space of periodic solutions by starting from the rational curve of the constant potentials, which has infinitely many double points, and opening finitely many of them with period-preserving flows.
They apply the same method to closed curves in $\bbR^3$ through the filament equation~\cite{Grinevich1997arxiv}.
For equivariant constant mean curvature tori in $\bbS^3$, Kilian, Schmidt and Schmitt~\cite{Kilian2015} show that the moduli space is connected, by deforming flat tori of spectral genus $0$ into Delaunay tori of spectral genus $1$ through the opening of a double point.
Heller and Ndiaye~\cite{Heller2021a} construct families of constrained Willmore tori by opening double points on the spectral curve of a homogeneous torus, and show that for conformal classes near that of the square torus they minimize the Willmore energy~\cite{Heller2021b}; see also the survey of Heller and Pedit~\cite{Heller2018a}.
Hauswirth, Kilian and Schmidt~\cite[Section~3.4]{Hauswirth2017arxiv} describe the addition and removal of double points for spectral data with a single differential.
The third kind of question is the closure of the spectral curves of CMC tori among all spectral curves, which lies in $\calS^g$~\cite{Carberry2016}.
For $g=2$ it equals $\calS^2$, whose boundary, which contains curves with double points, is described by Carberry, Kilian, Klein and Schmidt~\cite{Carberry2023arxiv}.
For CMC tori in $\bbS^3$ the closure is the whole space of spectral curves, which is the density result of~\cite{Carberry2016a} mentioned above.
In the setting of constant mean curvature tori, we describe the whole isoperiodic set through a nodal curve, rather than a single deformation that opens it, and we allow any number of double points.
In this paper we will not consider the further conditions at a Sym point that are needed to obtain a CMC torus, as opposed to a doubly periodic solution of the sinh-Gordon equation.

This paper arose from trying to understand the limits of spectral curves of genus $2$ as double points formed on $\bbS^1$, as part of a project continuing from~\cite{Knopf2025}.
Like~\cite{Carberry2019}, it aims to provide general tools for subsequent work.
In particular, it takes up the question, raised in~\cite{Carberry2020}, of connecting the spaces of spectral curves of different genera.
Section~\ref{sec:background} recalls what we need from~\cite{Hasse2026a}.
Section~\ref{sec:extending singular} extends $\calB_a$ and the period map to singular curves, shows by an example that smoothness can fail where two nodes collide, and proves smoothness at curves with nodes on $\bbS^1$.
Section~\ref{sec:isoperiodic nodes} treats the isoperiodic sets, the opening of nodes, and $\willmore$ at nodal curves.
Section~\ref{sec:invariants nodes} treats the invariants at nodal curves.

During the preparation of this work, Claude (Anthropic) was used in the scaffolding, checking, and editing of this article, as well as the creation of figures.

\subsection{Background}
\label{sec:background}

We recall the setup and results of~\cite{Hasse2026a} that we use, in the same notation, and refer there for proofs.
For $m\in\bbN_0$ let $\calP^m_\bbR$ be the space of polynomials $p$ of degree at most $m$ that satisfy the reality condition
\begin{equation}
\label{eq:reality condition}
\lambda^m\,\overline{p(\bar{\lambda}^{-1})} = p(\lambda).
\end{equation}
For $a\in\calP^{2g}_\bbR$ the function $\lambda^{-g}a(\lambda)$ is real on $\bbS^1$.
A polynomial $a\in\calP^{2g}_\bbR$ without a root at $\lambda=0$ defines a hyperelliptic curve $\Sigma_a$ by $\nu^2=\lambda a(\lambda)$, compactified over $\lambda=0,\infty$.
It carries the hyperelliptic involution $\sigma(\lambda,\nu)=(\lambda,-\nu)$ and the real involution $\rho(\lambda,\nu)=(\bar{\lambda}^{-1},\bar{\lambda}^{-(g+1)}\bar{\nu})$, which fixes the points over $\bbS^1$.
The space $\calH^g$ is the set of $a\in\bbC^{2g}[\lambda]$ that satisfy the following conditions (Definition~\refinv{def:Hg}{1.1}):
\begin{itemize}
\item[(H1)]
\emph{Reality condition:} $a\in\calP^{2g}_\bbR$.
\item[(H2)]
\emph{Positivity condition:} $\lambda^{-g}a(\lambda)\geq0$ for all $\lambda\in\bbS^1$.
\item[(H3)]
\emph{Monic condition:} $a(0)=1$.
\end{itemize}
Condition (H3) is a normalization: scaling by $\bbR^\times$ and the rotation $a(\lambda)\mapsto e^{g\ci\phi}a(e^{-\ci\phi}\lambda)$ give isomorphic curves, and after imposing (H3) only the rotations by $g$-th roots of unity remain.
We write $\calH^g_0$ for the subset of $a\in\calH^g$ with $2g$ distinct roots, that is, with nonsingular curve $\Sigma_a$.
It is open in the $(2g-1)$-dimensional affine space $\{a\in\calP^{2g}_\bbR\mid a(0)=1\}$.
The roots of $a\in\calH^g_0$ come in pairs $\alpha,\bar{\alpha}^{-1}$, which are exchanged by $\rho$.
The cycles $A$ on $\Sigma_a$ with $\rho_\ast A=-A$ are called $A$-cycles; they form a subgroup of $H_1(\Sigma_a,\bbZ)$ of rank $g$.
We only use canonical bases of homology $A_1,\dots,A_g,B_1,\dots,B_g$ whose $A_j$ are $A$-cycles.

For $b\in\calP^{g+1}_\bbR$ we consider the differential $\Theta_b$ on $\Sigma_a$, which has the following symmetries under $\sigma$ and $\rho$:
\begin{equation}
\label{def:theta}
\Theta_b = \frac{b(\lambda)}{\lambda}\,\frac{\dd\lambda}{\nu},
\qquad
\sigma^\ast\Theta_b=-\Theta_b,
\qquad
\rho^\ast\Theta_b=-\overline{\Theta_b}.
\end{equation}
Its only poles are residue-free double poles over $\lambda=0,\infty$.
The space $\calB_a$ consists of those $b\in\calP^{g+1}_\bbR$ for which $\Theta_b$ has purely imaginary periods (Definition~\refinv{def:calBa}{1.2}).
As the $A$-periods of $\Theta_b$ are real, and the real parts of its $B$-periods are half-integer linear combinations of its $A$-periods, this is equivalent to the vanishing of the $A$-periods.
By Lemma~\refinv{lem:differential omega}{1.3}, the map $\calB_a\to\bbC$, $b\mapsto b(0)$, is an isomorphism of real vector spaces.
For $\omega\in\bbC$ we write $b^\omega$ for the element of $\calB_a$ with $b^\omega(0)=\omega$, and $J$ for the complex structure $Jb^\omega=b^{\ci\omega}$ on $\calB_a$.
A root common to two linearly independent elements of $\calB_a$ is common to all of $\calB_a$, and we call it a common root of $\calB_a$.
The subset of $\calH^g_0$ where $\calB_a$ has a common root on $\bbS^1$ is denoted $\calS^g$~\cite{Carberry2016}.

To compare homology on nearby curves, fix $a\in\calH^g_0$ and pairwise disjoint balls $B(\alpha,\varepsilon)$ around its roots.
The polynomials in $\calH^g_0$ with exactly one root in each ball form a \emph{distinct root neighborhood} $U$ of $a$.
Over the complement of the balls, the curves $\Sigma_{\tilde{a}}$, $\tilde{a}\in U$, are identified by projection to $\lambda$, so a basis of homology on $\Sigma_a$ is transported to all of $U$.
The period map $\Per_a(b)=(\int_{A_1}\Theta_b,\dots,\int_{B_g}\Theta_b)$ has the two halves (Definition~\refinv{def:period map}{1.4})
\begin{equation}
\label{eq:APer BPer}
\APer_a(b)=\left(\int_{A_1}\Theta_b,\dots,\int_{A_g}\Theta_b\right)\in\bbR^g,
\qquad
\BPer_a(b)=-\ci\left(\int_{B_1}\Theta_b,\dots,\int_{B_g}\Theta_b\right)\in\bbC^g.
\end{equation}
Thus $\calB_a=\ker\APer_a$, and $\BPer_a$ takes values in $\bbR^g$ on $\calB_a$.
Let $\calB \subset \calH^g_0\times\calP^{g+1}_\bbR$ be the kernel of $\APer$, that is, the union of the planes $\{a\}\times\calB_a$.
Lemma~\refinv{lem:differential period}{1.5} states that $\Per$ is smooth in $a$ and that $\calB$ is a trivial rank $2$ vector bundle over $\calH^g_0$.
It also states that $\BPer_a|_{\calB_a}\colon\calB_a\to\bbR^g$ has full rank.
Therefore, for $g\geq2$, the image $\BPer_a[\calB_a]$ is a plane, that is, a point of $\Gr(2,\bbR^g)$, and carries the complex structure transported from $J$.
A real plane $V\subset\bbR^g$ with complex structure $J$ corresponds to the complex line $V^\bbC=\{v-\ci Jv\mid v\in V\}$, so we obtain a point $\BPer_a[\calB_a]^\bbC \in \CP^{g-1}\setminus\RP^{g-1}$.

The isoperiodic sets $\calH^g_0(a)$ and $\calH^g_0(a,\tau)$ are the connected components containing $a$ of the level sets of $\tilde{a}\mapsto\BPer_{\tilde{a}}[\calB_{\tilde{a}}]$ and $\tilde{a}\mapsto\BPer_{\tilde{a}}[\calB_{\tilde{a}}]^\bbC$ on $\calH^g_0$ (Definition~\refinv{def:isoperiodic sets}{2.1}).
They are studied through a basis $(b_1,b_2)$ of $\calB_a$.
The periods of every $b\in\calB_a$ are real linear combinations of those of $b_1$ and $b_2$, so fixing the periods of $\Theta_{b_1}$ and $\Theta_{b_2}$ fixes the plane $\BPer_a[\calB_a]$.
Conversely, for $g\geq2$, if $\BPer_{\tilde{a}}[\calB_{\tilde{a}}]=\BPer_a[\calB_a]$ then, as $\BPer_{\tilde{a}}|_{\calB_{\tilde{a}}}$ is injective, $\calB_{\tilde{a}}$ has a basis whose $B$-periods agree with those of $b_1$ and $b_2$, and their $A$-periods all vanish.
Hence $\calH^g_0(a)$ is the projection to $\calH^g_0$ of the level set through $(a,b_1,b_2)$ of the map
\begin{align}
\label{eq:period map}
\Phi:\calH^g_0\times(\calP^{g+1}_{\bbR})^2&\to\Hom(H_1(\Sigma_a,\bbZ),\bbC^2),&
(a,b_1,b_2)&\mapsto\left(C\mapsto\int_C \Theta_{b_1},\int_C \Theta_{b_2}\right).
\end{align}
Lemma~\refinv{lem:whitham kernel}{2.2} describes the kernel of $\dd\Phi$ in three parts, a format that Lemma~\ref{lem:whitham kernel node} below mirrors for nodal curves.
Part~(i) says that $(\dot{a},\dot{b}_1,\dot{b}_2)$ lies in the kernel if and only if $\dot{\Theta}_{b_k}=\dd(\ci c_k/\nu)$ for some $c_k\in\calP^{g+1}_\bbR$, which is equivalent to the \emph{Whitham equations}
\begin{equation}
\label{eq:whitham ab}
2a\dot{b}_k - \dot{a}b_k = \ci(2\lambda a c_k' - (\lambda a' + a)c_k),
\qquad k=1,2.
\end{equation}
Antisymmetrizing in $k$ gives the \emph{compatibility equation}
\begin{equation}
\label{eq:whitham Q}
b_2c_1 - b_1c_2 = aQ,
\end{equation}
for some $Q\in\calP^2_\bbR$.
The polynomials $c_1$, $c_2$ and $Q$ are uniquely determined by $(\dot{a},\dot{b}_1,\dot{b}_2)$.
Part~(ii) says that if $g\geq1$ and $b_1$ and $b_2$ have no common root, then $(\dot{a},\dot{b}_1,\dot{b}_2)\mapsto Q$ is an isomorphism of the kernel onto $\calP^2_\bbR$.
Part~(iii) says that if $g\geq1$ and $b_1$ and $b_2$ have exactly one common root $\beta$, of first order, then the kernel is still $3$-dimensional, but $Q$ maps it onto $\{Q\in\calP^2_\bbR\mid Q(\beta)=0\}$.

Using this description of the kernel to control the rank of $\Phi$, Theorem~\refinv{thm:Whitham deformation}{2.3} shows that for $g\geq2$, if $\calB_a$ has at most one common root, of first order, then $\calH^g_0(a)$ is locally a three-dimensional submanifold of $\calH^g_0$.
Along $\calH^g_0(a)$, take a frame $(b_1,b_2)$ of $\calB$ with constant periods and let $\tau=b_2(0)/b_1(0)$.
Then $\calH^g_0(a,\tau)$ is the subset of $\calH^g_0(a)$ on which $\tau$ is constant, and comparing constant coefficients in~\eqref{eq:whitham ab} and~\eqref{eq:whitham Q} gives
\begin{equation}
\label{eq:dot tau Q}
\dot{\tau}
= \frac{\ci a(0)}{2b_1(0)^2}Q(0).
\end{equation}
Under the same hypotheses, $\calH^g_0(a,\tau)$ is locally a one-dimensional submanifold of $\calH^g_0$.
By~\eqref{eq:dot tau Q}, its tangent line consists of the vectors with $Q\in\bbR\lambda$.

If the $b\in\calB_a$ for which $\Theta_b$ has periods in $2\pi\ci\bbZ$ form a lattice of rank $2$, then $\Sigma_a$ is the spectral curve of a doubly periodic solution of the sinh-Gordon equation, that is, of a CMC immersion of the plane with doubly periodic metric.
For a basis $(b_1,b_2)$ of this lattice, the Willmore energy of a fundamental domain is $2\ci\Res_{\lambda=0}q_{b_2}\,\Theta_{b_1}$, where $q_{b_2}$ is a local primitive of $\Theta_{b_2}$~\cite[Theorem~5.7]{Knopf2025}.
For any $b_1,b_2\in\calB_a$ this residue is real, by the reciprocity law and the real involution.
Dividing by the signed area of the parallelogram in $\bbR^g$ spanned by $\BPer_a(b_1)$ and $\BPer_a(b_2)$ gives the normalized Willmore energy (Definition~\refinv{def:willmore}{2.4})
\begin{equation}
\label{eq:willmore}
\willmore(a)
= \frac{2\ci\Res_{\lambda=0}q_{b_2}\,\Theta_{b_1}}{\operatorname{Area}(\BPer_a(b_1),\BPer_a(b_2))}
= \frac{8\ci\bigl(b_1(0)b_2'(0) - b_1'(0)b_2(0)\bigr)}{\operatorname{Area}(\BPer_a(b_1),\BPer_a(b_2))},
\end{equation}
which does not depend on the basis $(b_1,b_2)$ of $\calB_a$, since a change of basis multiplies the numerator and the denominator by its determinant.
Along $\calH^g_0(a)$, with a frame $(b_1,b_2)$ of constant periods, the area is constant and $\dot{\willmore}\operatorname{Area} = 4Q'(0)$.
Theorem~\refinv{thm:Willmore derivative}{2.5} states that the derivative $\dot{\willmore}$ along $\calH^g_0(a,\tau)$ vanishes at $a$ if and only if $\calB_a$ has a common root $\beta \in \bbS^1$, and in that case $\ddot{\willmore}$ vanishes if and only if $\frac{\dd}{\dd\lambda}\frac{b_2}{b_1}$ vanishes at $\beta$.
For an oriented basis $(b_1,b_2)$ of $\calB_a$, the function $f = b_2/b_1$ is real on $\bbS^1$ and has $f(0)=\tau$.
It is the winding number function of~\cite[Section~3]{Carberry2016}, up to taking the reciprocal.

\section{Extending the Period Map to Singular Curves}
\label{sec:extending singular}

In this section we extend the definition of $b^\omega$, $\calB_a$, and the period map $\Per_a$ to parts of $\calH^g \setminus \calH^g_0$.
Lemma~\ref{lem:b_omega continuous} shows that $\omega \mapsto b^\omega$ extends continuously to all of $\calH^g$, and that $b^\omega$ vanishes at the singularities of $\Sigma_a$.
Example~\ref{ex:colliding nodes} shows, however, that the period map need not be smooth where two nodes on $\bbS^1$ come together.
We therefore restrict subsequent statements to curves with nodes (ordinary double points) on $\bbS^1$.
The main technical device is to enlarge $\calH^g_0$ to a set $\calU^g$ that also contains these nodal curves, in which the curves with $n$ nodes on $\bbS^1$ form a stratum of codimension $n$.
After setting up homology at the nodes, with $A$- and $B$-cycles that vary smoothly as the nodes open, Lemma~\ref{lem:b_omega smooth node} shows that $\calB$ is a smooth vector bundle over $\calU^g$ and that $\BPer$ extends smoothly, with full rank as in Lemma~\refinv{lem:differential period}{1.5}.
We close by noting that the results of~\cite[Section~2]{Hasse2026a} on isoperiodic sets hold on $\calU^g_0$.
We begin with the continuity result, which holds for all singular curves.
The argument is a generalization of~\cite[Lemma~3.4]{Knopf2025}.

\begin{lemma}
\label{lem:b_omega continuous}
The function $(a,\omega) \mapsto b^\omega$ extends to a continuous function on $a \in \calP^{2g}_\bbR$ with $a(0)=1$ and $\omega \in \bbC$.
Further, $b^{\omega}$ has a root of order at least $\lfloor \frac{m}{2} \rfloor$ at a root of $a$ of order $m$. 
\end{lemma}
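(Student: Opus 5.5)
We will prove the lemma by characterizing $b^\omega$ through a condition that makes sense for every $a\in\calP^{2g}_\bbR$ with $a(0)=1$. We then obtain continuity from a compactness argument, as in~\cite[Lemma~3.4]{Knopf2025}.

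\emph{Step 1 (a closed characterization).} For $a\in\calH^g_0$, the condition that $\Theta_b$ has vanishing $A$-periods can be rephrased without homology. The $A$-cycles are generated by lifts of loops in $\bbC^\times$ that are symmetric under $\lambda\mapsto\bar\lambda^{-1}$. Equivalently, the integrals of $\frac{b(\lambda)}{\lambda}\frac{\dd\lambda}{\sqrt{\lambda a(\lambda)}}$ between suitable roots vanish. Rather than work with paths, we use the closed graph $\Gamma=\overline{\{(a,b)\mid a\in\calH^g_0,\ b\in\calB_a\}}$. Continuity then reduces to two statements. The first is a uniform bound: $|b^\omega|\le C|\omega|$ for $a$ in a compact set. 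The second is uniqueness: for each $a$ and $\omega$ there is a unique $b$ with $(a,b)\in\Gamma$ and $b(0)=\omega$. Given both, a standard subsequence argument yields continuity.

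\emph{Step 2 (bound).} Suppose $b_n=b^{\omega_n}_{a_n}$ with $|\omega_n|\le1$ and $\|b_n\|\to\infty$. Normalize to $\hat b_n=b_n/\|b_n\|$ and pass to a limit $\hat b$ with $\hat b(0)=0$ and $\|\hat b\|=1$. By the reality condition, $\hat b$ also has vanishing top coefficient, so $\hat b=\lambda\hat c$ with $\hat c\in\calP^{g-1}_\bbR$. Thus $\Theta_{\hat b}$ is holomorphic on the limit curve, in the sense of the normalization $\tilde\Sigma$ together with the limiting residue data, and its $A$-periods vanish. The $A$-periods are real and determine the real parts of all periods, so all real parts vanish. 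By Riemann's bilinear relations (positivity of $\Imag\int\eta\wedge\bar\eta$) such a holomorphic differential is zero.

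This is the step I expect to be the main obstacle. On a singular limit, the vanishing $A$-cycles around collapsing roots must be handled: near a root of order $m$ the limiting differential is $\lambda^{-1}\hat b\,\dd\lambda/\sqrt{\lambda a}$. One must show that it is either integrable, or else that vanishing of the shrinking $A$-periods forces the would-be residues at the nodes to be zero. I would do this by a local computation. A small loop around $k$ colliding branch points encircles them, the period over it is $2\pi\ci$ times a residue at the limit point, and it is an $A$-cycle, hence zero in the limit. With these residues zero, the bilinear argument applies on the normalization.

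\emph{Step 3 (uniqueness and vanishing at roots).} The same local residue argument gives the root statement. At a root $\alpha$ of $a$ of order $m$, the limiting differential $\frac{b(\lambda)}{\lambda}\frac{\dd\lambda}{\nu}$ behaves like $(\lambda-\alpha)^{k-m/2}\dd\lambda$, where $k$ is the order of $b$ at $\alpha$. The loops encircling the merging branch points near $\alpha$, and their $\rho$-symmetrizations, are $A$-cycles, or combinations with zero $A$-type integrals. Their periods tend to zero, and these periods control the Laurent coefficients of $(\lambda-\alpha)^{k-m/2}$ on the normalization up to the pole order. For the $m=2$ case, this kills the residue, so $b(\alpha)=0$. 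For general $m$, the expansions of the nearby $a_n$ produce enough independent vanishing cycles, namely $\lfloor m/2\rfloor$ of them, to force $k\ge\lfloor m/2\rfloor$. I would verify this by rescaling $\lambda-\alpha=\epsilon z$ and reducing to a limit polynomial in $z$. Uniqueness follows as in Step 2: the difference of two limits has $b(0)=0$, and it is killed by the bilinear argument on the normalization.
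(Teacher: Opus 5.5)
\textbf{There is a genuine gap.} Steps 2 and 3 both rest on the claim that the periods of $\Theta_{b_n}$ over the small loops around a collapsing cluster of roots vanish because these loops are $A$-cycles. This fails in general.

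It holds for a cluster converging to a point of $\bbS^1$ near which $\lambda^{-g}a>0$. But the lemma is stated for every $a\in\calP^{2g}_\bbR$ with $a(0)=1$, so it includes, for instance, a double root $\alpha\notin\bbS^1$, paired with $\bar\alpha^{-1}$. There $\rho$ carries the loop $L$ around the two merging roots to the loop around the partner cluster. So only $L-\rho_\ast L$ is an $A$-cycle, and its vanishing period says only that $\Real\int_L\Theta_{b_n}=0$. That is no new information, since all periods of $\Theta_{b_n}$ are purely imaginary by definition. In the limit the residue at the points over $\alpha$ is therefore forced to be real, not zero.

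The same happens at every even-order cluster off $\bbS^1$. There the shrinking cycles give at best $m-1$ real conditions, against the $m$ real parameters in $b(\alpha),\dots,b^{(m/2-1)}(\alpha)$. The limit curve itself cannot exclude this either: a differential of the third kind with real residues and purely imaginary periods need not vanish. Think of $r\,\dd\log\frac{z-p}{z-q}$ on $\CP^1$, whose real part is exact. So neither your bilinear-relations argument in Step~2 nor the vanishing $b^\omega(\alpha)=0$ in Step~3 goes through. Separately, the rescaling count of $\lfloor m/2\rfloor$ independent vanishing cycles for higher-order roots on $\bbS^1$ is only asserted. It would need a multi-scale analysis whenever the rescaled roots cluster again.

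\textbf{The missing ingredient} is a uniform bound that sees more than the periods of shrinking cycles. The paper gets it from the single-valued harmonic function $h_n$ with $\dd h_n=\Real\Theta_{b_n}$, normalized by $\sigma^\ast h_n=-h_n$.
\begin{itemize}
\item Since $a(0)=1$, small discs about $\lambda=0,\infty$ contain no roots of the $a_n$.
\item By the maximum principle, $|h_n|$ on the rest of $\Sigma_{a_n}$ is bounded by its values on the boundary circles. The Laurent expansion controls these in terms of $\omega$ and $\|b_n\|$.
\item Any limit of $h_n$ (or of $h_n/\|b_n\|$) is then a bounded harmonic function on punctured neighbourhoods of the points of the normalization over the roots of $a$.
\item The removable singularity theorem makes $\Theta$ holomorphic there. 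This works for roots of any order, on or off $\bbS^1$, with no rescaling.
\end{itemize}
This single argument gives the order bound $\lfloor m/2\rfloor$ and the uniqueness of the limit. It also gives the contradiction when $\|b_n\|\to\infty$: the limit is harmonic on the compact normalization and odd under $\sigma$, hence zero.

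If you prefer to stay with periods, you would need the cycles passing between the merging roots, not just those around them. Their periods pick up a term $2r\log(1/\epsilon_n)$, where $\epsilon_n$ is the separation of the roots. Requiring their real parts to vanish does force a real residue $r$ to be zero, but pushing this to higher-order clusters is considerably more delicate.

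There is also a smaller scope issue: your $\Gamma$ is a closure over $\calH^g_0$, so it reaches only $\calH^g$, not all of $\{a\in\calP^{2g}_\bbR\mid a(0)=1\}$.
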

\begin{proof}
For all $a \in \calP^{2g}_\bbR$ with $a(0)=1$ and $2g$ distinct roots and for all $\omega \in \bbC$ there exists a harmonic function $h^\omega$ on $\Sigma_a^\circ$ with $\dd h^\omega = \Real \Theta_{b^\omega}$, because the periods of $\Theta_{b^\omega}$ are purely imaginary.
The additive constant of $h^\omega$ is further fixed by the condition $\sigma^\ast h^\omega = - h^\omega$.
Any $a \in \calP^{2g}_\bbR$ with $a(0) = 1$ is the limit of a sequence $(a_n)_{n \in \bbN}$ where $a_n$ additionally has $2g$ distinct roots. 
Write $b^\omega_n$ and $h^\omega_n$ for $b^\omega$ and $h^\omega$ at $a_n$.
Choose a norm on $\calP^{g+1}_\bbR$; then we may pass to a convergent subsequence of $b^\omega_n/\|b^\omega_n\|$ with nontrivial limit $\tilde{b}^\omega$.
Away from the roots of $a$ and $\lambda = 0,\infty$, every point $\lambda$ has a compact neighborhood on which the harmonic functions $h^\omega_n/\|b^\omega_n\|$ converge uniformly to $\tilde{h}^\omega$.
Moreover, away from the roots of $a$ and $\lambda = 0, \infty$ we know that $\dd\tilde{h}^\omega = \Real \Theta_{\tilde{b}^\omega}$.
Next, because $a(0)=1$, there are small discs around $\lambda = 0,\infty$ that contain no roots of the $a_n$.
By the strong maximum principle, $|h^\omega_n|$ is bounded on the complement of these discs by its values on the boundaries of the discs.
Therefore $\tilde{h}^\omega$ is bounded outside these discs, and in particular we may apply the removable singularity theorem at the roots of $a$ to conclude that this limit is a harmonic function on the normalization of $\Sigma_a^\circ$.

We next show by contradiction that $\|b^\omega_n\|$ is bounded.
Suppose that $\|b^\omega_n\| \to \infty$.
In a small disc around $\lambda=0$ (a similar argument holds around $\lambda = \infty$) we have the Laurent series expansion $h^\omega_n \sim - \Real (2\nu^{-1}(\omega + \dots))$.
Hence $h^\omega_n / \|b^\omega_n\|$ converges to a harmonic function on this small disc, as $\omega/\|b^\omega_n\| \to 0$ by assumption.
Therefore $h^\omega_n/\|b^\omega_n\|$ converges to a harmonic function on all of the normalization of $\Sigma_a$. 
Hence it is constant, and by the hyperelliptic symmetry $\sigma$ it is zero.
Then the meromorphic differential $\Theta_{\tilde{b}^\omega}$ is also zero, and hence $\tilde{b}^\omega = 0$, a contradiction.
Therefore $\|b^\omega_n\|$ is bounded, and we may pass to a convergent subsequence of $b^\omega_n$ itself.

Suppose that two subsequences of $(b^\omega_n)$ had different limits.
We would have two harmonic functions on the normalization of $\Sigma_a^\circ$.
By the above Laurent expansion, their difference is also harmonic at $\lambda = 0,\infty$, since the singular part there is determined by $\omega$ alone.
It follows that this difference is everywhere zero.
In other words, the sequence $(b^\omega_n)$ has a unique accumulation point.
A bounded sequence with a unique accumulation point is convergent.
Thus we have constructed the unique limit of $b^\omega_n$ and thereby extended $\calB_a$ to all $a \in \calP^{2g}_\bbR$ with $a(0)=1$.
Continuity at $a$ with distinct roots is Lemma~\refinv{lem:differential period}{1.5}; at general $a$ it follows from the above by a diagonal argument, since polynomials with distinct roots are dense.

In addition, this argument shows that $\Theta_{b^\omega}$ is a meromorphic differential on the normalization of $\Sigma_a$ with poles only over $\lambda = 0,\infty$.
This forces $b^\omega$ to have roots at the roots of $a$ to the stated orders.
\end{proof}

Next we consider whether $b^\omega$ and the period map are smooth, and hence whether the planes $\calB_a$ form a smooth vector bundle over $\calH^g$.
This turns out not to be possible in the same generality as the above lemma, as the following example shows.

\begin{example}
\label{ex:colliding nodes}
Consider $a(\lambda) = (\lambda^2 - 2c\lambda + 1)^2$ for $c>0$.
Since $\lambda^{-2}a(\lambda) = (\lambda + \lambda^{-1} - 2c)^2$, this is an element of $\calH^2$.
For $c < 1$, we have two nodes at $e^{\pm\ci\varphi}$, where $c = \cos\varphi$ and $0 < \varphi < \frac{\pi}{2}$.
In the limit $c \to 1$ they come together, giving $a(\lambda) = (\lambda-1)^4$.
The normalization of $\Sigma_a$ has genus $0$, and we parametrize it by $\zeta \in \CP^1$ using $\lambda = \zeta^2$ and $\nu = \zeta(\zeta^4 - 2c\zeta^2 + 1)$.
By Lemma~\ref{lem:b_omega continuous}, $b^\omega$ is divisible by $\lambda^2 - 2c\lambda + 1$, and so
\[
b^\omega(\lambda) = (\lambda^2 - 2c\lambda + 1)(\omega + \bar{\omega}\lambda),
\qquad
\Theta_{b^\omega} 
= 2\left(\omega\zeta^{-2} + \bar{\omega}\right)\dd\zeta
= \dd \left( 2(\bar{\omega}\zeta - \omega\zeta^{-1}) \right).
\]
In this example $b^\omega$ is a smooth function of $c$, including at the limit $c = 1$, where it has a root of order $2 = \lfloor \frac{4}{2} \rfloor$ in accordance with Lemma~\ref{lem:b_omega continuous}.
The $B$-cycle at the node $\lambda = e^{\ci\varphi}$ is a path from $\zeta = -e^{\ci\varphi/2}$ to $\zeta = e^{\ci\varphi/2}$, and similarly for $\lambda = e^{-\ci\varphi}$.
Thus, up to the signs fixed by orientations,
\[
\BPer_a(b^\omega)
= -8\left( \Imag\omega \cos\tfrac{\varphi}{2} - \Real\omega \sin\tfrac{\varphi}{2}, \Imag\omega \cos\tfrac{\varphi}{2} + \Real\omega \sin\tfrac{\varphi}{2} \right).
\]
In terms of the coefficients of $a$ we have $\cos\frac{\varphi}{2} = \sqrt{(1+c)/2}$, which is smooth at $c=1$, but $\sin\frac{\varphi}{2} = \sqrt{(1-c)/2}$, which is not.
Hence $\BPer_a(b^\omega)$ is continuous but not smooth at $c = 1$.
Moreover, the two components agree in the limit, so the limit of $\BPer_a|_{\calB_a}$ does not have full rank.
This is also observed in the proof of~\cite[Theorem~3.5]{Knopf2025}.
The example demonstrates that any generalization of Lemma~\refinv{lem:differential period}{1.5} to singular curves requires additional conditions.
\end{example}

Other singular curves can behave worse still: for equivariant non-conformal harmonic tori in $\bbS^3$, the space of spectral curves spirals around a curve at which branch points come together off $\bbS^1$~\cite{Carberry2020}.
Cautioned by these examples, we restrict ourselves to curves whose only singularities are nodes on $\bbS^1$.
In order to employ the inverse function theorem, it is useful to enlarge this set slightly to an open set within $\{ a \in \calP^{2g}_\bbR \mid a(0) = 1\}$, by replacing condition (H2) with the following condition (H2').

\begin{definition}
\label{def:U}
Let $\calU^g$ be the set of $a \in \bbC^{2g}[\lambda]$ that satisfy conditions (H1), (H3), and
\begin{itemize}
\item[(H2')] 
$\lambda^{-g}a(\lambda) > 0$ for some $\lambda \in \bbS^1$.
Double roots of $a$ are allowed on $\bbS^1$, but all other roots are simple. 
Any unimodular double root of $a$ is a local minimum of $\lambda^{-g}a(\lambda)$. 
\end{itemize}
For $n\geq0$, let $\calH^g_n$ be the subset of $\calH^g$ where $a$ has exactly $n$ distinct double roots on $\bbS^1$ and otherwise simple roots.
Likewise, let $\calU^g_n$ be the subset of $\calU^g$ with exactly $n$ double roots on $\bbS^1$.
Each $\calH^g_n$ is contained in $\calU^g_n$, which in addition contains polynomials with simple roots on $\bbS^1$.
\end{definition}

In particular, every $a\in\calU^g_0$ has only simple roots, some of which may lie on $\bbS^1$.
We can extend all the results of Section~\refinv{sec:differentials}{1.2} to $\calU^g_0$ with minimal modifications.
The real-valued function $\lambda^{-g}a(\lambda)$ on the circle must change sign an even number of times, so the number of simple roots on $\bbS^1$ is even.
Consider a maximal arc on the unit circle where $\lambda^{-g}a(\lambda)$ is negative.
Its endpoints are simple roots of $a$; we say these roots are a pair, akin to $\alpha,\bar{\alpha}^{-1}$.
If $\tilde{\gamma}$ is a lift of this arc, then the real involution $\rho$ acts on it as $\sigma$.
Hence $A = \tilde{\gamma} - \sigma_\ast\tilde{\gamma}$ obeys $\rho_\ast A = - A$, so it is an $A$-cycle, and the corresponding period of a differential of the form~\eqref{def:theta} is real.
Thus we still have a notion of a canonical basis with $A$- and $B$-cycles, and the arguments of Lemmas~\numinv{lem:differential omega}{1.3} and~\refinv{lem:differential period}{1.5} apply.

Recall that the $A$-cycles of an adapted canonical basis (Definition~\refinv{def:adapted basis}{3.1}) are lifts $\tilde{\gamma}-\sigma_\ast\tilde{\gamma}$ of paths $\gamma$ in the $\lambda$-plane that are invariant under $\lambda\mapsto\bar{\lambda}^{-1}$ and join a pair of roots.
The cycles of the previous paragraph are of exactly this form.
The proof of Lemma~\refinv{lem:CA adapted basis}{3.2}, that two adapted canonical bases differ by an element of $\CA(g)$, goes through with one modification.
If there are roots of $a$ on $\bbS^1$, then there are multiple possible paths over the unit circle in the construction of $\tilde{\gamma}'$.
It does not matter which path is used, only that the same path is retraced on the return journey.

We are now ready to handle nodal curves.
By a node we always mean an ordinary double point.
A double root $\alpha\in\bbS^1$ of $a$ corresponds to a node of $\Sigma_a$ over $\lambda=\alpha$, and we denote both by $\alpha$.
Suppose that two branch points come together at a node on $\bbS^1$.
The cycle around them shrinks to the node, and we call it the \emph{neck cycle}.
It is an $A$-cycle, and hence this $A$-period of $\Theta_b$ is zero for $b \in \calB_a$.
For general $b \in \calP^{g+1}_\bbR$, $\Theta_b$ pulled back to the normalization may have a simple pole over the node, with opposite residues at the two points over it.
The neck cycle surrounds this pole, and thus the $A$-period is $2\pi\ci$ times the residue.
This gives another proof of the fact that $b^\omega$ has a root at any double root of $a$ on $\bbS^1$, a special case of Lemma~\ref{lem:b_omega continuous}.
We define the \emph{$B$-cycle} to be the path in the normalization of $\Sigma_a$ that begins and ends at the two points over the node, and the \emph{$B$-period} to be the integral along this path.
It is well known that these are the appropriate definitions for homology on nodal curves, but we will see this demonstrated in Lemma~\ref{lem:b_omega smooth node}.

We next give a family of $A$- and $B$-cycles that are well-behaved in the presence of nodes.
Let $a \in \calU^g$ have double roots $\alpha_1,\dots,\alpha_n \in \bbS^1$.
Fix a ball $B(\alpha_j,3\varepsilon)$ around each double root $\alpha_j$ of $a$ and a ball of radius $\varepsilon$ around each simple root of $a$, with $\varepsilon$ small enough that all these balls are pairwise disjoint.
Let $U\subset\calU^g$ be a neighborhood of $a$ such that each ball $B(\alpha_j,\varepsilon)$ contains exactly two roots $\alpha$ and $\alpha^*$ of $\tilde{a}\in U$, counted with multiplicity, and each ball around a simple root of $a$ contains exactly one root of $\tilde{a}$.
Since $\tilde{a}$ has $2g$ roots, it has no roots in $B(\alpha_j,3\varepsilon)\setminus B(\alpha_j,\varepsilon)$.
A lift of $\partial B(\alpha_j,2\varepsilon)$ to $\Sigma_{\tilde{a}}$ is an $A$-cycle, and for $\tilde{a} = a$ it is the neck cycle of the node.
For $\alpha\neq\alpha^*$ this cycle is homologous to $\tilde{\gamma}-\sigma_\ast\tilde{\gamma}$, where $\gamma$ is either the radial segment or the arc of $\bbS^1$ from $\alpha$ to $\alpha^*$.
Thus it can be an element of an adapted canonical basis.
Fix a direction that is neither tangent nor radial to $\bbS^1$ at any point of $\bbS^1 \cap B(\alpha_j,\varepsilon)$.
Consider the chord of $B(\alpha_j,3\varepsilon)$ through $\frac{1}{2}(\alpha + \alpha^*)$ in this direction (see Figure~\ref{fig:chord}).
Because the chord is defined using only symmetric functions of the two roots, it is a smooth function of the coefficients of $\tilde{a}$, including when the two roots come together.
Moreover, this chord either passes through the node, when there is one, or separates $\alpha$ and $\alpha^*$ when they are distinct.
The lift of this chord to $\Sigma_{\tilde{a}}$ has intersection number $\pm 1$ with this $A$-cycle, and we choose the orientation to make it $1$.
It can be completed to a cycle on $\Sigma_{\tilde{a}}$ that avoids the $\varepsilon$-balls around all other roots of $a$, and so varies smoothly with $\tilde{a}$.
We therefore choose a canonical basis of homology that varies smoothly on $U$ and has $A$- and $B$-cycles of the above form at the double roots of $a$.
This extends the notion of distinct root neighborhoods to all of $\calU^g$.

\begin{figure}
\centering
\newcommand{\chordpanel}[2]{%
  \coordinate (C) at (-3,0);
  \coordinate (r1) at ($(C)+(#1)$);
  \coordinate (r2) at ($(C)+(#2)$);
  \coordinate (m) at ($(r1)!0.5!(r2)$);
  \begin{scope}
    \clip (0,0) circle (1.3);
    \fill[gray!12] (C) circle (3);
    \draw[thick, gray] (C) circle (3);
  \end{scope}
  \begin{scope}
    \clip (0,0) circle (1.25);
    \draw[thick, dashed] (m) -- ($(m)+(55:2)$);
    \draw[thick] ($(m)+(55:-2)$) -- (m);
  \end{scope}
  \draw (0,0) circle (1);
  \node[gray, above left] at ($(C)+(24:3)$) {$\bbS^1$};
}
\newcommand{\rootsegment}{%
  \draw[gray!50] (r1) -- (r2);
}
\newcommand{\rootcross}[1]{%
  \draw[thick] ($(#1)+(-0.07,-0.07)$) -- ($(#1)+(0.07,0.07)$);
  \draw[thick] ($(#1)+(-0.07, 0.07)$) -- ($(#1)+(0.07,-0.07)$);
}
\begin{subfigure}{0.32\textwidth}
\centering
\begin{tikzpicture}[scale=1.5]
  \chordpanel{-6:2.55}{-8:3.53}
  \rootsegment \rootcross{r1} \rootcross{r2}
  \node[above left]  at (r1) {$\alpha$};
  \node[above right] at (r2) {$\alpha^*$};
\end{tikzpicture}
\caption{Conjugate pair}
\end{subfigure}
\hfill
\begin{subfigure}{0.32\textwidth}
\centering
\begin{tikzpicture}[scale=1.5]
  \chordpanel{0:3}{0:3}
  \rootcross{r1}
  \node[below right] at (r1) {$\alpha_j$};
\end{tikzpicture}
\caption{Double root on $\bbS^1$}
\end{subfigure}
\hfill
\begin{subfigure}{0.32\textwidth}
\centering
\begin{tikzpicture}[scale=1.5]
  \chordpanel{15:3}{-8:3}
  \rootsegment \rootcross{r1} \rootcross{r2}
  \node[left=3pt] at (r1) {$\alpha$};
  \node[right=3pt] at (r2) {$\alpha^*$};
\end{tikzpicture}
\caption{Two roots on $\bbS^1$}
\end{subfigure}
\caption{
The chord defining the $B$-cycle through a double root, near the ball $B(\alpha_j,\varepsilon)$ that contains the branch points.
The chord continues to the boundary of $B(\alpha_j,3\varepsilon)$.
The shaded region is the inside of the unit disc.
The branch cut (faint gray line) is the segment between the branch points, so that the chord changes sheets (solid vs. dashed) at the midpoint $\frac{1}{2}(\alpha+\alpha^*)$.
The $B$-cycle has intersection $+1$ with the $A$-cycle, which is the lift of $\partial B(\alpha_j,2\varepsilon)$.
}
\label{fig:chord}
\end{figure}

Several of the arguments below pass to the normalization of a nodal curve, and we now set up notation for this.
Consider a factorization $a = p^2 \tilde{a}$ with $p \in \calP^{n}_\bbR$, where the roots of $\tilde{a}$ are not necessarily simple.
Then the (partial) normalization $\Sigma_{\tilde{a}} \to \Sigma_a$ is the lift of the projection to $\lambda$.
Since $\lambda^{-n}p(\lambda)^2 = |p(\lambda)|^2 \geq 0$ on $\bbS^1$, the polynomial $\tilde{a} \in \calP^{2\hat{g}}_\bbR$ satisfies conditions (H1) and (H2) for $\hat{g} = g - n$, but not necessarily (H3).
Only real scalars preserve the reality condition, so we cannot in general rescale $p$ to achieve $\tilde{a}(0) = 1$.
Instead we restore (H3) by rotation, defining $\hat{a} \in \calH^{\hat{g}}$ by
\begin{equation}
\label{eq:normalization}
\hat{a}(\lambda) = e^{\hat{g}\ci\phi}\,\tilde{a}(e^{-\ci\phi}\lambda) \text{ for } e^{\ci\phi} = \tilde{a}(0)^{-1/\hat{g}},
\end{equation}
for a choice of the root.
Given $a$ and $\hat{a}$, there is a natural isomorphism between $\calB_a$ and $\calB_{\hat{a}}$.
Lemma~\ref{lem:b_omega continuous} shows that the polynomial $b^\omega$ is divisible by $p$.
Removing this factor $b = p \tilde{b}$ and rotating gives
\[
\hat{b}(\lambda)
= e^{(\hat{g}+1)\ci \phi/2} \,\tilde{b}(e^{-\ci \phi} \lambda),
\]
so that $\Theta_{\hat{b}}$ is the pullback of $\Theta_b$ under the normalization map and rotation $\lambda \mapsto e^{-\ci\phi}\lambda$.
Since $p(0)^2 = \tilde{a}(0)^{-1} = e^{\hat{g}\ci\phi}$, we have $\hat{b}(0) = \pm e^{\ci\phi/2}\, b(0)$, where the choice of sign, in essence, is the choice of $\pm \hat{\nu}$.
Hence the isomorphism $\calB_a \to \calB_{\hat{a}}$ sends $\omega$ to $e^{\ci\phi/2}\omega$, and in particular is complex linear with respect to $J$.
In the most common case, where $p = \ci \alpha^{-1/2}(\lambda - \alpha)$ for a single node $\alpha \in \bbS^1$, we have $p(0)^2 = -\alpha = \tilde{a}(0)^{-1}$, $e^{\ci\phi} = (-\alpha)^{1/\hat{g}}$, and $\hat{a}(\lambda) = -\alpha \tilde{a}(e^{-\ci\phi}\lambda)$.

\begin{remark}
\label{rem:normalized differentials}
The proof of Lemma~\ref{lem:b_omega smooth node} below uses the normalized holomorphic differentials $\Omega_j$ on a family of algebraic curves.
A standard reference from the complex analytic perspective is~\cite{Fay1973}, and from the algebraic geometric perspective one has~\cite{Serre1988}.
We collect some facts from~\cite{Hu2020} that we will use; we rely on this source because it explicitly handles multiple nodes, whereas~\cite[Proposition~3.7]{Fay1973} is stated only for one node and the subsequent example on page~54 with multiple nodes has rational normalization.
These holomorphic differentials are normalized by $\int_{A_k}\Omega_j = \delta_{jk}$ for $j, k = 1,\dots,g$.
The holomorphic differentials are uniquely determined by their $A$-periods, and vary analytically in the family~\cite[Theorem~1.1 and Remark~2.4]{Hu2020}.
This gives a normalized basis of holomorphic differentials on the nearby curves~\cite[Section~4.2]{Hu2020}.
Suppose that $a$ has double roots at $\alpha_1,\dots,\alpha_n \in \bbS^1$ and write the differential $\Omega_j = u_j\,\frac{\dd\lambda}{\nu}$ with $u_j\in\bbC^{g-1}[\lambda]$.
On the normalization of $\Sigma_a$, for $j\leq n$ the differential $\Omega_j$ is of the third kind, with simple poles only at the two points over $\alpha_j$ and residues $\pm\frac{1}{2\pi\ci}$ there.
For $j>n$ it is holomorphic on the normalization.
The pullback of the $A_j$-period gives the residue, hence $u_j(\alpha_k) = 0$ for $k\leq n$ with $k\neq j$, and $u_j(\alpha_j)\neq0$ for $j\leq n$.
\end{remark}

\begin{lemma}
\label{lem:b_omega smooth node}
At all $a \in \calU^g$ the function $\omega \mapsto b^{\omega}$ is smooth and $\calB = \calU^g \times \calB_a$ (the bundle with fibers $\calB_a$ over $\calU^g$) is a smooth rank $2$ vector bundle.
The period map $\BPer$ extends to a smooth function on $\calB$.
Further, $\BPer_a|_{\calB_a} : \calB_a \to \bbR^g$ has full rank and in particular is injective for $g \geq 2$.
\end{lemma}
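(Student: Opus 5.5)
The plan is to write $b^\omega$ as the Riemann bilinear correction of a fixed polynomial by normalized holomorphic differentials, and then use the analytic dependence of those differentials from Remark~\ref{rem:normalized differentials}. Fix $a\in\calU^g$ with double roots $\alpha_1,\dots,\alpha_n\in\bbS^1$. Take a distinct root neighborhood $U$ with the smoothly varying canonical basis constructed above, whose $A$-cycles at the $\alpha_j$ are the lifts of $\partial B(\alpha_j,2\varepsilon)$. For $\tilde a\in U$ and any $\omega\in\bbC$, take a reference polynomial, for example $b_0^\omega=\omega+\bar\omega\lambda^{g+1}$. It lies in $\calP^{g+1}_\bbR$, and $\Theta_{b_0^\omega}$ has the prescribed principal parts at $\lambda=0,\infty$. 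The differential $\Theta_{b_0^\omega}$ depends smoothly on $\tilde a$ away from the branch points. Its $A$-periods are therefore smooth on $U$: every $A$-cycle, including the neck cycles, stays at distance at least $\varepsilon$ from the roots.

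We then set $\Theta_{b^\omega}=\Theta_{b_0^\omega}-\sum_j \bigl(\int_{A_j}\Theta_{b_0^\omega}\bigr)\Omega_j$. The $\Omega_j=u_j\,\dd\lambda/\nu$ vary analytically in $\tilde a$ by the cited results of~\cite{Hu2020}, including across the nodal stratum. Hence $b^\omega=b_0^\omega-\lambda\sum_j(\int_{A_j}\Theta_{b_0^\omega})u_j$ is smooth in $(\tilde a,\omega)$ and real linear in $\omega$. We must check that this $b^\omega$ lies in $\calP^{g+1}_\bbR$. Since the $A$-periods of $\Theta_{b_0^\omega}$ are real and $\rho$ maps $A$-cycles to their negatives, applying $\rho^\ast$ shows that the corrected differential again satisfies the symmetry~\eqref{def:theta}. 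Uniqueness of a differential with vanishing $A$-periods and fixed principal parts then gives the reality condition. On $\calU^g_0$ this agrees with the previous $b^\omega$. At nodal curves it agrees with the continuous extension of Lemma~\ref{lem:b_omega continuous} by continuity. This proves that $\calB$ is a smooth trivial rank $2$ bundle, with frame $b^1,b^{\ci}$.

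For smoothness of $\BPer$, the $B$-cycles away from the nodes vary smoothly and avoid the roots, so only the chord part near each $\alpha_j$ needs attention. The chord runs through the branch cut $[\alpha,\alpha^*]$ or through the node. By Lemma~\ref{lem:b_omega continuous}, $b^\omega$ vanishes at the node in the limit. I would therefore expand near the midpoint $m=\frac12(\alpha+\alpha^*)$. Write $\nu^2=\lambda(\lambda-\alpha)(\lambda-\alpha^*)r(\lambda)$ with $(\lambda-\alpha)(\lambda-\alpha^*)=(\lambda-m)^2-\delta$, where $\delta=\frac14(\alpha-\alpha^*)^2$ is smooth in $\tilde a$. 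Then split the integrand as a term smooth in $(\tilde a,\lambda)$ times $\dd\lambda/\sqrt{(\lambda-m)^2-\delta}$. The integral of such a term along the chord is a standard logarithmic expression $c\log\delta+(\text{smooth})$, with $c$ a multiple of the $A$-period around the neck. Since the neck $A$-period vanishes on $\calB$, the logarithmic term drops out, and only smooth functions of $\delta$ and of the coefficients remain.

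This is the step I expect to be the main obstacle: making precise that the non-smooth part is exactly proportional to the neck period. I would first try the substitution $\lambda-m=\sqrt{\delta}\cosh t$, or equivalently a Picard--Lefschetz-type argument. Alternatively, one could cite the analytic dependence of normalized periods on nodal families from~\cite{Hu2020}. There, the $B$-periods of the $\Omega_j$ have $\log$ singularities only through the period matrix entries $\int_{B_j}\Omega_j$, and these do not enter $\BPer$ on $\calB$, which involves only differentials with zero $A$-periods and the fixed second-kind poles.

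Full rank follows by the argument of Lemma~\refinv{lem:differential period}{1.5} on the normalization. Suppose $\BPer_a(b)=0$ for some $b\in\calB_a$. Then all periods of $\Theta_b$ vanish: the ordinary periods, the neck periods, and the nodal $B$-periods. So $\Theta_b$ has a single-valued primitive on the normalization that also agrees at the two preimages of each node. That primitive is a meromorphic function with only simple poles over $\lambda=0,\infty$ and odd under $\sigma$. The same degree argument as in the nonsingular case then forces $b(0)=0$ and hence $b=0$. For $g\geq2$ this gives injectivity.
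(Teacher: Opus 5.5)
Your construction of $b^\omega$ is fine, and it is a genuine alternative to the paper's first step. The paper shows that $(a,b)\mapsto(a,b(0),\APer_a(b))$ is a local diffeomorphism: the neck periods force $\dot b(\alpha_j)=0$, after which $\dot\Theta_b$ is holomorphic on the normalization with zero $A$-periods. You instead write $b^\omega$ explicitly through the $\Omega_j$, at the price of relying on their analytic dependence from~\cite{Hu2020} already at this stage.

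The genuine gap is in the rank argument. The degree argument of Lemma~\refinv{lem:differential period}{1.5} excludes nonzero $\sigma$-odd functions with simple poles over $\lambda=0,\infty$ only on curves of genus at least $2$. Your primitive lives on the normalization, of genus $\hat g=g-n$. For $n\geq g-1$, already for $g=2$ with one node, such functions exist: $q=r\lambda^{-1}\hat\nu$ with $r\in\bbR$ if $\hat g=1$, and $q=\ci\hat c/\hat\nu$ with $\hat c\in\calP^1_\bbR$ if $\hat g=0$. What removes them is the condition you recorded but never used. By $\sigma$-oddness the nodal $B$-period is $q(\alpha_j^+)-q(\alpha_j^-)=2q(\alpha_j^+)$, so $q$ must vanish at every node. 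Since $\hat\nu(\alpha_j)\neq0$, one node forces $r=0$; for $\hat g=0$ and $g\geq2$, two distinct nodes force $\hat c=0$. For $n=g=1$ this is a single real condition on the plane of $\hat c$, leaving a one-dimensional kernel. So your conclusion ``forces $b(0)=0$ and hence $b=0$'' would make a map from a plane to $\bbR$ injective, which is impossible. The case analysis on $\hat g$ is needed both for the proof and for the correct statement: full rank, but injective only for $g\geq2$.

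The smoothness of the nodal $B$-periods, which you flag as the main obstacle, is also left unproved. Your local route can be completed, but its precise form is Picard--Lefschetz with complexified parameters. Going once around $\delta_j=0$ acts by $B_j\mapsto B_j\pm A_j$, so $\int_{B_j}\Theta_b\mp\frac{\log\delta_j}{2\pi\ci}\int_{A_j}\Theta_b$ is single-valued with at most logarithmic growth. It therefore extends holomorphically across $\delta_j=0$, and the second term vanishes on $\calB$.

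Your fallback via~\cite{Hu2020} is not right as stated. In your own decomposition $\Theta_{b^\omega}=\Theta_{b^\omega_0}-\sum_k r_k\Omega_k$, the singular diagonal entry $\int_{B_j}\Omega_j$ does enter, with the generally nonzero coefficient $r_j=\int_{A_j}\Theta_{b^\omega_0}$. It only cancels against the logarithm in $\int_{B_j}\Theta_{b^\omega_0}$.

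The paper avoids all local analysis with the reciprocity law for $\Theta_b$ and $\Omega_j$. Because the $A$-periods of $\Theta_b$ vanish and the $\Omega_j$ are normalized, one gets $-\int_{B_j}\Theta_b=2\pi\ci\bigl(-2u_j(0)\omega+2u_j(\infty)\bar\omega\bigr)$. At a node the same identity holds on the normalization, where the third-kind $\Omega_j$ contributes the boundary term $q_b(\alpha_j^+)-q_b(\alpha_j^-)=\int_{B_j}\Theta_b$. The right-hand side is analytic in the coefficients of $a$, which settles smoothness in one line using the differentials you already have.
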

\begin{proof}
When $a$ has only simple roots, this is Lemma~\refinv{lem:differential period}{1.5} or its extension to $\calU^g_0$.
So consider $a$ with nodes $\alpha_1,\dots,\alpha_n \in \bbS^1$.
Let $U \ni a$ be a distinct root neighborhood, with its smoothly varying canonical basis of homology, constructed above.
The first statement of the lemma follows from the claim that the following map is smooth and a local diffeomorphism at $(a,b)$
\begin{align*}
\calU^g \times \calP^{g+1}_{\bbR} &\to \calU^g \times \bbC \times \bbR^g, &
(a,b) &\mapsto \left(a, b(0), \left(\int_{A_1} \Theta_b, \dots, \int_{A_g} \Theta_b \right)\right).
\end{align*}
It is smooth on $U$ because the $A$-cycles do not pass through the roots of any $\tilde{a}\in U$.
Since both spaces are submanifolds of dimension $3g+1$, the map is a local diffeomorphism if the kernel of the derivative is trivial at $(a,b)$. 
Let $(\dot{a}, \dot{b})$ be an element of this kernel. 
From the first two components of the map, $\dot{a} = 0$ and $\dot{b}(0) = 0$. 
In a kernel direction, the $A$-periods are constant, hence
\begin{align*}
0
= \frac{\dd}{\dd t} \int_{A_j} \Theta_b
= \int_{A_j} \dot{\Theta}_{b} 
= \int_{A_j} \frac{1}{\lambda}\left( \dot{b} - \frac{b\, \dot{a}}{2a} \right) \frac{\dd\lambda}{\nu}
= \int_{A_j} \frac{\dot{b}}{\lambda}\,\frac{\dd\lambda}{\nu}
= 2\pi\ci\, \dot{b}(\alpha_j) \Res_{\lambda = \alpha_j} \left(\frac{\dd\lambda}{\lambda\nu} \right).
\end{align*}
This forces $\dot{b}$ to vanish at each double root $\alpha_j$ of $a$.
Since $\dot{b}(0) = 0$ too, $\dot{\Theta}_b$ is a holomorphic 1-form on the normalization of $\Sigma_a$. 
It vanishes, since its periods over the remaining $A$-cycles, which are the $A$-cycles of the normalization, are zero. 
This completes the proof of the first statement.

For the second statement, we must further show that $\BPer$ is a smooth function.
We use the $B$-cycles through the chords of Figure~\ref{fig:chord}.
It follows from Lemma~\ref{lem:b_omega continuous} that $\Theta_b$ does not have poles on the normalization over the nodes on $\bbS^1$.
In particular, the integral of $\Theta_b$ along each $B$-cycle is well defined even when $\tilde{a}$ has double roots.
To see that it is smooth, we apply the reciprocity law~\cite[p.~241]{Griffiths1994} to $\Theta_b$ and one of the normalized differentials $\Omega_j = u_j\,\frac{\dd\lambda}{\nu}$ from Remark~\ref{rem:normalized differentials}, with $u_j\in\bbC^{g-1}[\lambda]$.
The reciprocity law works by cutting the surface into a simply connected polygon on which $q_b = \int \Theta_b$ is well-defined.
Then
\begin{align*}
\labelthis{eq:B period residue}
- \int_{B_j}\Theta_b
= \sum_i \left( \int_{A_i}\Theta_b \int_{B_i} \Omega_j - \int_{B_i}\Theta_b \int_{A_i} \Omega_j \right)
&= 2\pi\ci \Res \left( \Omega_j q_b \right) \\
&= 2\pi\ci \left(-2 u_j(0) \omega + 2 u_j(\infty) \bar{\omega}\right),
\end{align*}
where $\omega = b(0)$ and $u_j(\infty)$ means the coefficient of $\lambda^{g-1}$ (the highest order generically).
This formula holds for nonsingular curves and on singular curves for the cycles $B_j$ that do not pass through a node.
To handle these remaining `periods', we pull back to the normalization of $\Sigma_a$, where $B_j$ is a path between the two points $\alpha^\pm$ over the node $\alpha$ and the corresponding differential $\Omega_j$ is a normalized differential of the third kind.
Reciprocity gives in this case
\begin{align*}
0 = 2\pi\ci \Res \left( \Omega_j q_b \right)
&= 2\pi\ci \left(-2\omega u_j(0) + 2 \bar{\omega} u_j(\infty)\right)
+ q_b(\alpha^+) - q_b(\alpha^-)\\
&= 2\pi\ci \left(-2\omega u_j(0) + 2 \bar{\omega} u_j(\infty)\right)
+ \int_{B_j} \Theta_b.
\end{align*}
This formula agrees with~\eqref{eq:B period residue}; it holds for nonsingular and singular curves alike.
Therefore the $B$-periods are smooth, even at $a$, as the $u_j$ are analytic functions of the coefficients of $a$ (see Remark~\ref{rem:normalized differentials}).
This smoothness argument is essentially the same as~\cite[p.~86]{Ercolani1993}.

Finally we compute the rank of $\BPer_a|_{\calB_a}$ at $a \in \calU^g_n$.
Let $(\hat{a},\hat{b})$ be obtained from $(a,b)$ by removing the factors at all $n$ nodes and rotating, as in~\eqref{eq:normalization}, so that $\Sigma_{\hat{a}}$ is the normalization of $\Sigma_a$, of genus $\hat{g}=g-n$, and $\Theta_{\hat{b}}$ is the pullback of $\Theta_b$.
Suppose $b\in\calB_a$ has $\BPer_a(b)=0$.
Then $\Theta_b$ is exact, as is $\Theta_{\hat{b}}=\dd q$, where $q$ is a meromorphic function on $\Sigma_{\hat{a}}$ with $\sigma^\ast q=-q$ and simple poles over $\lambda=0,\infty$.
The $B$-period of $\Theta_b$ corresponding to the node $\alpha_j$ is that along the path from $\alpha_j^-$ to $\alpha_j^+$, and equals $q(\alpha_j^+)-q(\alpha_j^-)=2q(\alpha_j^+)$.
As in the proof of Lemma~\refinv{lem:differential period}{1.5}, there are three cases for the existence of $q$.
If $\hat{g}\geq2$, which covers $1 \leq n \leq g-2$, there is no such $q$ other than $0$.
Next, if $\hat{g}=1$, that is, $n = g-1$, then $q$ is a real multiple of $\lambda^{-1}\hat{\nu}$.
A single node already forces $q=0$, since $\lambda^{-1}\hat{\nu}\neq0$ at $\alpha_j$.
Lastly, if $\hat{g}=0$, that is, $n = g$, then $q=\ci\hat{c}/\hat{\nu}$ for some $\hat{c}\in\calP^1_\bbR$.
So for $2 \leq n=g$ distinct nodes, evaluation at two of them forces $\hat{c}=0$.
In all the cases thus far, the kernel is trivial.
However, for the remaining case $n=g=1$ there is only the single condition $\hat{c}(\alpha_1)=0$, which leaves a one-dimensional kernel, and the rank is $1=\dim \bbR^g$.
Therefore in all cases $\BPer_a|_{\calB_a}$ has full rank, as claimed.
\end{proof}

The results of Section~\refinv{sec:isoperiodic sets}{2} on isoperiodic sets are stated there for $\calH^g_0$, but they hold on $\calU^g_0$ with the same proofs.
Those proofs use only the following properties of $a$: its roots are simple; it lies in an open subset of $\{a\in\calP^{2g}_\bbR \mid a(0)=1\}$; the $A$-periods of $\Theta_b$ are real and the real parts of its $B$-periods are determined by them; and $\BPer$ is smooth on $\calB$ and injective on $\calB_a$ for $g\geq2$.
The first two hold on $\calU^g_0$ by definition, the third by the discussion of $A$-cycles on $\calU^g_0$ above, and the last by Lemma~\ref{lem:b_omega smooth node}.
Lemma~\refinv{lem:whitham kernel}{2.2(iii)} treats a common root of $\calB_a$.
At nodal curves we do not consider common roots of $\calB_a$ other than the nodes themselves, so that case does not really arise in this paper.
Nonetheless, Lemma~\refinv{lem:whitham kernel}{2.2(iii)} extends to $\calU^g_0$ under one additional hypothesis, as follows.
Its proof uses that the single common root $\beta$ of $b_1$ and $b_2$ lies on $\bbS^1$, and so is not a root of $a\in\calH^g_0$; this is what lets the argument of part~(ii) apply.
On $\calU^g_0$ the polynomial $a$ may have roots on $\bbS^1$, so there we assume in addition that $\beta$ is not a root of $a$.

Since $\calB$ and $\BPer$ are defined on all of $\calU^g$ by Lemma~\ref{lem:b_omega smooth node}, we define the isoperiodic sets $\calU^g(a)$ and $\calU^g(a,\tau)$ as in Definition~\refinv{def:isoperiodic sets}{2.1}, and likewise $\calU^g_0(a)$ and $\calU^g_0(a,\tau)$ in $\calU^g_0$.
Then Theorems~\numinv{thm:Whitham deformation}{2.3} and~\refinv{thm:Willmore derivative}{2.5} hold for $a\in\calU^g_0$, with $\calU^g_0(a)$ and $\calU^g_0(a,\tau)$ in place of $\calH^g_0(a)$ and $\calH^g_0(a,\tau)$, under the same additional assumption that a common root of $\calB_a$ is not a root of $a$.
In particular, $\calU^g_0(a)$ and $\calU^g_0(a,\tau)$ are $3$- and $1$-dimensional manifolds, respectively.
We also define the normalized Willmore energy $\willmore$ by~\eqref{eq:willmore} for all $a\in\calU^g$.
At a nodal curve, the argument that the residue in the numerator of~\eqref{eq:willmore} is real goes through on the normalization, on which $\Theta_{b_1}$ and $\Theta_{b_2}$ have poles only over $\lambda=0,\infty$ by Lemma~\ref{lem:b_omega continuous}.

\section{Isoperiodic Sets at Nodal Curves}
\label{sec:isoperiodic nodes}

The results of this section are extensions of those of Section~\refinv{sec:isoperiodic sets}{2} to nodal curves, that is, to polynomials $a$ with double roots on $\bbS^1$.
The main difference is that the Whitham equations~\eqref{eq:whitham ab} no longer completely encode the tangent directions to the isoperiodic set.
They must be augmented with conditions that ensure the constancy of periods along $B$-cycles that pass through a node.
Under reasonable assumptions, the isoperiodic sets extend smoothly to such $a$.

The strategy is to control the kernel of the derivative of the period map.
Lemma~\ref{lem:whitham kernel node}(ii) and (iii), like Lemma~\refinv{lem:whitham kernel}{2.2(ii) and (iii)}, say that the kernel of $\dd\Phi$ is $3$-dimensional, and parametrize it (as much as possible) by $Q\in\calP^2_\bbR$.
These results hold under a nondegeneracy condition $D_j\neq0$ at each node $\alpha_j$ that has no analogue in Lemma~\refinv{lem:whitham kernel}{2.2}.
In the smooth case, a common root of $\calB_a$ on $\bbS^1$ is an impediment to the isoperiodic flow.
At a node, $\calB_a$ is forced to have a common root, but this is not an impediment to the isoperiodic flow if the common root is first order and $D_j \neq 0$.
Theorem~\ref{thm:Whitham deformation node} extends Theorem~\refinv{thm:Whitham deformation}{2.3}: the isoperiodic sets $\calU^g(a)$ and $\calU^g(a,\tau)$ remain submanifolds of dimensions $3$ and $1$ at the nodal curves.
The isoperiodic sets are transverse to the subset of nodal curves, and we describe how the nodes open, whether each double root splits into a conjugate pair off $\bbS^1$ or into two simple roots on $\bbS^1$.
Corollary~\ref{cor:Willmore derivative node} extends Theorem~\refinv{thm:Willmore derivative}{2.5}: $\dot{\willmore}$ vanishes at a nodal curve if and only if $D_n = 0$.
Finally, Lemma~\ref{lem:closure S} shows that only nodal curves with $D_j = 0$ at some node can lie in the closure of $\calS^g$, which suggests the connection between the two parts~(iii), and between Theorem~\refinv{thm:Willmore derivative}{2.5} and Corollary~\ref{cor:Willmore derivative node}.

The closest comparison is with Kewlin's thesis~\cite{Kewlin2015thesis}, which reproves the existence result of Jaggy~\cite{Jaggy1994} with Whitham flows instead of the inverse function theorem.
Kewlin works with the spectral curves of CMC tori, preserving the conditions at the Sym point as well as the periods, and opens a single double point away from the Sym point.
He finds curves with a suitable double point by combining its position with a Whitham flow of the normalization~\cite[Lemma~4.7]{Kewlin2015thesis}, much as we do in Lemma~\ref{lem:insert node}.
He uses the Whitham equations, as polynomial equations, only to control the flows that keep the double point closed.
To open it, he needs an ordinary differential equation, whose solution he constructs as a formal power series and whose convergence he proves separately~\cite[Section~4.5]{Kewlin2015thesis}.
We instead add the conditions from the nodal $B$-periods to the Whitham equations (Lemmas~\ref{lem:nodal B period derivative} and~\ref{lem:whitham kernel node}), which are linear conditions on the same polynomials.
The opening flows then lie in the same polynomial framework as all the others, and the whole isoperiodic set passes through the nodal curve as a manifold.
This also allows any number of double points.
Unlike Kewlin, and the inverse function theorem arguments of~\cite{Ercolani1993,Jaggy1994}, we do not consider the preservation of the two further conditions needed for a torus, a common root of $\Theta_{b_1},\Theta_{b_2}$ at a Sym point $\lambda_0\in\bbS^1$ and integral periods along a path between the two points over $\lambda_0$ (conditions (P4) and (P5) of \refinv{sec:differentials}{Section~1.2}), although this should be feasible within our framework.

We first show that the Whitham equations still characterize the directions that preserve the non-nodal periods, and then compute the derivative of the nodal periods.
Let us consider a path $(a_t,b_t)$ in $\calB$ that has a nodal curve only at time $t=0$.
That is, $a_0 \in \calU^g_n$ but $a_t \in \calU^g_0$ for $t \neq 0$.
Call the nodes $\alpha_1, \dots, \alpha_n \in \bbS^1$.
We assume that the path opens each node to first order, that is, $\dot{a}(\alpha_j) \neq 0$ for $j = 1,\dots,n$.
We choose a basis of homology as constructed before Remark~\ref{rem:normalized differentials}; in particular, for each node $j=1,\dots,n$ we take the neck cycle $A_j$ as an $A$-cycle in the basis, and the $B_j$-cycle is a path from $\alpha_j^-$ to $\alpha_j^+$.
Since we are considering a single path $(a_t,b_t)$, we may suppose that the cycles are constant in the $\lambda$-plane, in contrast to that construction, where we needed cycles that vary over a neighborhood of $a \in \calU^g$.
Let $p_{\alpha_j}(\lambda) = \ci \alpha_j^{-1/2}(\lambda - \alpha_j)$ and $p(\lambda) = p_{\alpha_1}(\lambda)\cdots p_{\alpha_n}(\lambda)$.
At $t=0$ we have $a = p^2 \tilde{a}$ and $b = p \tilde{b}$.
On the normalization $\Sigma_{\tilde{a}}$, the cycles $A_{n+1},\dots,A_g,B_{n+1},\dots,B_g$ pull back to a homology basis of $\Sigma_{\tilde{a}}$ and $A_j$ pulls back to a loop around one of the points over $\alpha_j$.
We know that $\dot{\Theta}_{b_t}$ pulled back to the normalization may have double poles over the roots of $\lambda a(\lambda)$.
At the branch points, the residues are necessarily zero, and at the points over a node, the residues are negatives of each other.
Because $b_t \in \calB_{a_t}$, its $A$-periods are always zero. 
Therefore the $A$-periods of $\dot{\Theta}_{b_t}$ are also zero, and hence the residues are zero.
Together this proves that $\dot{\Theta}_{b_t}$ is a differential of the second kind.

Let us first show how to control the periods of one differential at a nodal curve.
Specifically, we consider the non-nodal period map on $\calB$
\[
(a,b) \mapsto \left( \int_{B_j} \Theta_{b} \right)_{j=n+1,\dots,g}.
\]
By definition, $(\dot{a}_0,\dot{b}_0)$ belongs to the kernel of the derivative of this map if and only if
\[
0
= \frac{\dd}{\dd t}\left.\int_{B_j} \Theta_{b_t} \right|_{t=0} 
= \int_{B_j} \dot{\Theta}_{b_0}
\qquad\text{ for }\qquad
j=n+1,\dots,g.
\]
As $\dot{\Theta}_{b_0}$ is of the second kind (as shown above) and its periods are zero, it is exact.
The primitive has at most simple poles at the roots of $\lambda a(\lambda)$.
By the symmetries used at the start of the proof of Lemma~\refinv{lem:whitham kernel}{2.2}, it follows that
\[
\dot{\Theta}_{b_0} = \dd \left( \ci \frac{c(\lambda)}{\nu} \right)
\]
for $c \in \calP^{g+1}_\bbR$.
Conversely, if $\dot{\Theta}_{b_0}$ is an exact differential, then it follows that $(\dot{a},\dot{b})$ belongs to the kernel of the derivative.
Computing the $t$ and $\lambda$ derivatives, we arrive at the same Whitham equation as~\eqref{eq:whitham ab}.

We will need in several places certain relations among these polynomials at the nodes.
Take the Whitham equation~\eqref{eq:whitham ab}, differentiate once, and evaluate at $\alpha_j$:
\begin{equation}
\label{eq:whitham eval node}
\dot{a}(\alpha_j)b'(\alpha_j)
= \ci\alpha_j a''(\alpha_j)c(\alpha_j).
\end{equation}
Differentiating twice and then evaluating at $\alpha_j$ gives
\begin{equation}
\label{eq:whitham deriv eval node}
2a''(\alpha_j)\dot{b}(\alpha_j) - 2\dot{a}'(\alpha_j)b'(\alpha_j) - \dot{a}(\alpha_j)b''(\alpha_j)
= -\ci(\lambda a)'''(\alpha_j)\,c(\alpha_j).
\end{equation}

Now we handle two differentials along the path $a_t \in \calU^g$.
Suppose that $(\dot{a},\dot{b}_1,\dot{b}_2)$ lies in the kernel of the derivative of 
\[
(a,b_1,b_2) \mapsto \left( \int_{B_j} \Theta_{b_1}, \int_{B_j} \Theta_{b_2} \right)_{j=n+1,\dots,g}.
\] 
As in the proof of Lemma~\refinv{lem:whitham kernel}{2.2}, we have~\eqrefinv{eq:whitham antisymmetrize}{2.4}, from which we conclude that $a$ divides $\ci \lambda a' (b_2 c_1 - b_1 c_2)$.
Observe that $\tilde{a}$ is coprime to $\lambda a'$, so it divides $b_2 c_1 - b_1 c_2$.
However, $a = p^2\tilde{a}$ shares the factor $p$ with $a'$, so we cannot directly make the same conclusion for $p$.
Instead,~\eqref{eq:whitham eval node} tells us that 
\[
(b_2 c_1 - b_1 c_2)'(\alpha_j)
= (b_2 c_1' - b_1 c_2' + b_2' c_1 - b_1' c_2)(\alpha_j)
= 0.
\]
In other words, $\alpha_j$ is a root of $b_2 c_1 - b_1 c_2$ of at least second order, and thus $p^2$ does divide it.
This means that~\eqref{eq:whitham Q} also holds in this case.
In summary, the Whitham equations~\eqref{eq:whitham ab} and~\eqref{eq:whitham Q} characterize exactly the directions that preserve the $A$-periods and the $B_j$-periods for $j = n+1,\dots,g$, that is, the non-nodal $B$-periods.

Next, we develop a formula for how the nodal $B_j$-periods change.
We do so under the condition that $\dot{a}(\alpha_j) \neq 0$ at every double root $\alpha_j$.
We will see that this implies that the double root is deformed to two simple roots along the path $a_t$.
This is generically true, so the restriction is a technicality, not a fundamental obstacle; we remove it by linearity in the proof of Lemma~\ref{lem:whitham kernel node}.

\begin{lemma}
\label{lem:nodal B period derivative}
Let $a\in\calU^g_n$ have double roots $\alpha_1,\dots,\alpha_n \in \bbS^1$, and let $(a_t,b_t)$ be a smooth path in $\calB$ with $a_0=a$.
Suppose $\dot{\Theta}_{b_0} = \dd\left(\ci\frac{c}{\nu}\right)$ for some $c\in\calP^{g+1}_\bbR$ and $\dot{a}(\alpha_j) \neq 0$ for all $j=1,\dots,n$.
Let $\Omega_j = u_j\,\frac{\dd\lambda}{\nu}$ be the normalized differential of the third kind corresponding to the node $\alpha_j$, as in Remark~\ref{rem:normalized differentials}.
Then for $j=1,\dots,n$,
\[
\left.\frac{\dd}{\dd t}\int_{B_j} \Theta_{b_t} \right|_{t=0}
= - 4 \pi\ci \frac{u_{j}(\alpha_j)}{\alpha_j^2 a''(\alpha_j)} \left[ \dot{b}(\alpha_j) + \frac{b'(\alpha_j)}{a''(\alpha_j)}\dot{a}'(\alpha_j) - 2\ci\alpha_j c'(\alpha_j) + \ci c(\alpha_j) \right].
\]
\end{lemma}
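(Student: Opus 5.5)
The plan is to compare the nodal $B_j$-period at time $t$ with its value at $t=0$ by splitting $B_j$ into an outer part and an inner part. The outer part lies outside $B(\alpha_j,2\varepsilon)$, and the inner part is the chord segment inside $B(\alpha_j,2\varepsilon)$. On the outer part the curves $\Sigma_{a_t}$ are identified by projection to $\lambda$, so the derivative there is $\int\dot{\Theta}_{b_0}=\int\dd(\ci c/\nu)$. By Stokes this reduces to the boundary values $\ci c/\nu$ at the two endpoints of the chord on $\partial B(\alpha_j,2\varepsilon)$. Since $\nu$ vanishes to first order at the node on the normalization, these boundary values blow up like $1/\varepsilon$ as $\varepsilon\to0$, and must be cancelled by the inner part. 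The whole difficulty is concentrated in a local computation near the node.

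For the inner part I would write a local model of $a_t$ near $\alpha_j$. Taylor expand
\[
a_t(\lambda)=\tfrac12 a''(\alpha_j)\bigl((\lambda-\alpha_j-\delta_t)^2-s_t\bigr)\bigl(1+O(\lambda-\alpha_j)\bigr).
\]
Here the centre shift is $\delta_t=-t\,\dot{a}'(\alpha_j)/a''(\alpha_j)+O(t^2)$ and the splitting is $s_t=-2t\,\dot{a}(\alpha_j)/a''(\alpha_j)+O(t^2)$, which is nonzero to first order because $\dot a(\alpha_j)\neq0$. Similarly expand
\[
b_t/\lambda=\beta_0(t)+\beta_1(t)(\lambda-\alpha_j)+\dots,\qquad \beta_0(t)=t\,\dot b(\alpha_j)/\alpha_j+O(t^2).
\]
Then evaluate the chord integral of $\beta_0/\sqrt{x^2-s}$ and $\beta_1 x/\sqrt{x^2-s}$ explicitly, with $x=\lambda-\alpha_j-\delta_t$. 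The first produces a term $\beta_0\log s_t\sim t\log t$. This non-smooth term must be cancelled by the $A_j$-period condition, because the residue of $\Theta_{b_t}$ at the pinching neck is exactly $\beta_0/\sqrt{a''/2}$ up to $O(s)$ corrections. I would use $\int_{A_j}\Theta_{b_t}=0$ to solve for $\beta_0(t)$ in terms of $s_t$, $\delta_t$ and $b'(\alpha_j)$. This is how the combination $\dot b(\alpha_j)+b'(\alpha_j)\dot a'(\alpha_j)/a''(\alpha_j)$ (value at the shifted centre) should appear. The $\beta_1$ term then gives a finite contribution that matches the $1/\varepsilon$ boundary terms from the outer part, leaving the $c$-terms after using \eqref{eq:whitham eval node} and \eqref{eq:whitham deriv eval node} to eliminate $c(\alpha_j)$ against $\dot a(\alpha_j)b'(\alpha_j)$.

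I expect the outcome of this local computation not to come out in the clean form of the statement with the prefactor $u_j(\alpha_j)$. To produce that factor I would instead run the argument through the reciprocity law as in the proof of Lemma~\ref{lem:b_omega smooth node}. Pair $\Theta_{b_t}$ against the normalized differential $\Omega_j$ on $\Sigma_{a_t}$, differentiate in $t$, and on the normalization of $\Sigma_a$ apply reciprocity to the second-kind differential $\dot\Theta_{b_0}=\dd(\ci c/\nu)$ and the third-kind $\Omega_j$. The residue of $\Omega_j\cdot \ci c/\nu$ at $\alpha_j^\pm$ involves $u_j(\alpha_j)$ times the first Laurent coefficients of $c/\nu$. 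Using $\nu^2=\lambda a\approx\tfrac12\alpha_j a''(\alpha_j)(\lambda-\alpha_j)^2$, these coefficients give the factor $1/(\alpha_j^2a''(\alpha_j))$ and the combination $-2\ci\alpha_jc'(\alpha_j)+\ci c(\alpha_j)$. The $\dot b$ and $\dot a'$ terms then enter as the correction from the moving node, that is, from the jump term $q(\alpha_j^+)-q(\alpha_j^-)$ and from the $t$-dependence of the splitting. These are controlled by the local model above.

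The main obstacle is the careful bookkeeping at the node: showing that the $t\log t$ terms cancel by the vanishing of the neck period, and tracking the first-order shift of the centre $\tfrac12(\alpha+\alpha^*)$ of the chord. After that, matching constants (the factor $-4\pi\ci$ and signs from orientation conventions) is routine.
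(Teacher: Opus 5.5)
There is a genuine gap. The route you settle on to get the stated form fails at its first step, and the decisive computation is never carried out.

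You propose to apply reciprocity on the normalization at $t=0$ to $\dot\Theta_{b_0}=\dd(\ci c/\nu)$ and $\Omega_j$. But \eqref{eq:whitham eval node} gives $c(\alpha_j)=\dot a(\alpha_j)b'(\alpha_j)/(\ci\alpha_j a''(\alpha_j))$. Under the hypothesis $\dot a(\alpha_j)\neq0$ this is nonzero whenever $b'(\alpha_j)\neq0$. Then $\dot\Theta_{b_0}$ has double poles exactly at the endpoints $\alpha_j^\pm$ of the nodal $B_j$-path, so $\int_{B_j}\dot\Theta_{b_0}$ diverges and the $t$-derivative cannot be taken under the integral at $t=0$.

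Reciprocity at $t=0$ then only says that the residues of the rational differential $\ci u_j c\,\dd\lambda/(\lambda a)$ sum to zero. Its residue at $\alpha_j^\pm$ involves $u_j'(\alpha_j)$ and $a'''(\alpha_j)$, neither of which occurs in the formula. Differentiating the identity \eqref{eq:B period residue} for $\Theta_{b_t}$ instead would require $\dot u_j$, which you have no handle on. So everything is pushed into the ``correction from the moving node'', which is the whole content of the lemma.

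The missing idea is to apply reciprocity at $t\neq0$ to $\dot\Theta_{b_t}$ itself. There the curve is smooth, $\dot\Theta_{b_t}$ is of the second kind with vanishing $A$-periods, and $B_j$ avoids the branch points. Smoothness of $\BPer$ (Lemma~\ref{lem:b_omega smooth node}) identifies the derivative at $0$ with $\lim_{t\to0}\int_{B_j}\dot\Theta_{b_t}$.

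The paper then adds the residue-free term $\Omega_{j,t}\,\ci c/\nu_t$, with $c$ frozen. This makes the relevant differential $-P_t\,\dd\lambda/(2\lambda a_t\nu_t)$ with $P_0\equiv0$, so the residue at a branch point $\alpha_t$ is $2u_{j,t}(\alpha_t)P_t(\alpha_t)/[(\lambda a_t)'(\alpha_t)]^2$.
\begin{itemize}
\item At branch points converging to simple roots of $\lambda a$, this tends to zero.
\item At the pairs colliding at a node $\alpha_k$, it is a $0/0$ limit, resolved by $t^{-1}a_t'(\alpha_t)^2\to-2a''(\alpha_k)\dot a(\alpha_k)$ and $t^{-1}P_t(\alpha_t)\to\dot P_0(\alpha_k)$.
\item Only $k=j$ survives, because $u_j(\alpha_k)=0$ for $k\neq j$.
\end{itemize}
Finally, \eqref{eq:whitham eval node} linearizes the leftover term $\ci\alpha_j\dot a'(\alpha_j)c(\alpha_j)/\dot a(\alpha_j)$.

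Your first, direct route could in fact be completed, and the prefactor is no obstacle. Since $\Omega_j$ has residue $\pm1/(2\pi\ci)$ at $\alpha_j^\pm$ and $\nu\sim\pm\kappa(\lambda-\alpha_j)$ there with $\kappa^2=\alpha_j a''(\alpha_j)/2$, one has $u_j(\alpha_j)=\kappa/(2\pi\ci)$. So the right-hand side of the lemma is the purely local quantity $-(\alpha_j\kappa)^{-1}[\cdots]$. Killing the $\log s$ terms with the vanishing neck period is also the right idea.

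However, the bookkeeping you anticipate is wrong in several places:
\begin{itemize}
\item The $1/\varepsilon$ parts of $\ci c/\nu$ at the two ends of the chord cancel each other, not against the inner integral.
\item The centre shift is $\dot\delta=-\dot a'/a''+\dot a\,a'''/(3a''^2)$ at $\alpha_j$, not $-\dot a'/a''$.
\item The value of $b_t$ at the shifted centre has derivative $\dot b+b'\dot\delta=\dot b-b'\dot a'/a''+\dots$, not the bracket's $\dot b+b'\dot a'/a''$. It enters the chord integral only through the neck period, so it drops out together with the logarithms.
\end{itemize}
The bracket appears only after eliminating terms with \eqref{eq:whitham eval node} and \eqref{eq:whitham deriv eval node}. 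As written, the proposal stops where the proof has to begin.
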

\begin{proof}
Above, we were able to pass the derivative inside the integral using the dominated convergence theorem. 
This is because the cycles $B_j$ were disjoint from the nodes for $j = n+1,\dots,g$.
This fails for $t=0$ and $j=1,\dots,n$, since then the poles of $\dot{\Theta}_{b_0}$ lie on the path of integration.
However, $\BPer$ is a smooth function on $\calB$ by Lemma~\ref{lem:b_omega smooth node}, so it suffices to take a limit
\[
\left.\frac{\dd}{\dd t}\int_{B_j} \Theta_{b_t} \right|_{t=0}
= \lim_{t \to 0} \int_{B_j} \dot{\Theta}_{b_t}.
\]
We have already argued that $\dot{\Theta}_{b_t}$ is a differential of the second kind with zero $A$-periods.
Equation~\eqref{eq:B period residue} uses only these properties of $\Theta_b$, so it applies equally to $\dot{\Theta}_{b_t}$ and its primitive $\dot{q}_{b_t}$:
\begin{align*}
\int_{B_j} \dot{\Theta}_{b_t}
= - 2\pi\ci \Res \left( \Omega_{j,t} \dot{q}_{b_t} \right)
= 2\pi\ci \Res \left( \Omega_{j,t} \left[ \ci \frac{c}{\nu_t} - \dot{q}_{b_t} \right] \right).
\end{align*}
For the second equality, we have used that adding $\ci \frac{c}{\nu_t} \Omega_{j,t}$ does not change the residue because it is a meromorphic differential.
This modification is a convenient trick that helps us to expand the right-hand side as a function in $t$ around $t=0$.
We introduce the polynomial $P_t(\lambda)$ defined by
\[
\dd \left[ \ci \frac{c}{\nu_t} - \dot{q}_{b_t} \right] = -\frac{P_t\,\dd\lambda}{2\lambda a_t\nu_t},
\qquad 
P_t=2a_t \dot{b}_t- \dot{a}_t b_t-\ci\big(2\lambda a_tc'-(\lambda a_t'+a_t)c\big).
\]
Note that this is not the Whitham equation~\eqref{eq:whitham ab} because $c$ is fixed along the path $a_t$, although it is obviously related.
At $t=0$, however, it is the Whitham equation, since $\dot{\Theta}_{b_0} = \dd\left( \ci \frac{c}{\nu_0} \right)$, and so $P_0 \equiv 0$.

Here $\Res$ denotes the sum of the residues at all poles, which are exactly the roots of $\nu_t$ (including $\lambda = \infty$).
Near $\alpha_t$, a root of $\nu_t = \sqrt{\lambda a_t(\lambda)}$, we may expand
\[
\dd\left[ \ci \frac{c}{\nu_t} - \dot{q}_{b_t} \right] = \frac{P_t(\alpha_t)}{(\lambda a_t)'(\alpha_t)}\,\dd\Big(\frac1{\nu_t}\Big)+\text{holomorphic near }\alpha_t,
\]
so that
\[
\Res_{\alpha_t} \left( \Omega_{j,t} \left[ \ci \frac{c}{\nu_t} - \dot{q}_{b_t} \right] \right)
= u_{j,t}(\alpha_t) \frac{P_t(\alpha_t)}{(\lambda a_t)'(\alpha_t)} \Res_{\alpha_t} \frac{\dd\lambda}{\lambda a_t(\lambda)}
= \frac{2 u_{j,t}(\alpha_t) P_t(\alpha_t) }{[(\lambda a_t)'(\alpha_t)]^2}.
\]
At a root of $\lambda a_t(\lambda)$ that does not tend to a double root of $a$, so that it tends to a simple root of $\lambda a(\lambda)$, every factor in this expression remains bounded and the denominator remains nonzero.
Then $P_0 \equiv 0$ forces these residues to tend to zero as $t \to 0$.
If $\alpha_t$ tends to a double root $\alpha_k$ of $a(\lambda) = a_0(\lambda)$, then the denominator tends to zero and we will have to balance it against the vanishing of $P_t$.
Specifically, using the continuity of $\Omega_{j,t}$ in $t$ from Remark~\ref{rem:normalized differentials}, we will calculate
\[
\lim_{t \to 0} \Res_{\alpha_t} \left( \Omega_{j,t} \left[ \ci \frac{c}{\nu_t} - \dot{q}_{b_t} \right]  \right)
= \frac{2 u_{j}(\alpha_k) \lim_{t\to 0} t^{-1} P_t(\alpha_t) }{\alpha_k^2 \lim_{t\to 0} t^{-1} [a_t'(\alpha_t)]^2}.
\]
There are two roots of $a_t$ that converge to $\alpha_k$; one is $\alpha_t$ and call the other $\alpha^*_t$.
We write 
\[
a_t(\lambda) 
= f_t(\lambda)\tilde{a}_{t}(\lambda)
\qquad\text{for}\qquad
f_t(\lambda) = p_{\alpha_t}(\lambda)p_{\alpha^*_t}(\lambda) = R_t(\lambda - \alpha_t)(\lambda - \alpha^*_t),
\]
where $R_t = -(\alpha_t\alpha^*_t)^{-1/2}$, with the square root chosen continuously so that $R_0 = -\alpha_k^{-1}$ and $f_0 = p_{\alpha_k}^2$.
Because it is a symmetric function of the roots $\alpha_t$ and $\alpha^*_t$, the factor $f_t$ is a smooth function of the coefficients of $a_t$, even when the roots come together, and therefore a smooth function of $t$.
In particular, $f_0'$ has a root at $\alpha_k$ and hence $f_t'(\alpha_k) = O(t)$.
We compute, using this fact in the last line:
\begin{align*}
\lim_{t \to 0} t^{-1} [a_t'(\alpha_t)]^2
&= \lim_{t \to 0} t^{-1} f_t'(\alpha_t)^2\, \tilde{a}_{t}(\alpha_t)^2 
= \tilde{a}(\alpha_k)^2 \lim_{t \to 0}  t^{-1} R_t^2[(\alpha_k - \alpha_t) - (\alpha_k - \alpha^*_t)]^2 \\
&= \tilde{a}(\alpha_k)^2 \lim_{t \to 0}  t^{-1} R_t^2\left[ [(\alpha_k - \alpha_t) + (\alpha_k - \alpha^*_t)]^2 - 4(\alpha_k - \alpha_t) (\alpha_k - \alpha^*_t) \right] \\
&= \tilde{a}(\alpha_k)^2 \lim_{t \to 0}  t^{-1} [f_t'(\alpha_k)]^2
- 4 R_0\tilde{a}(\alpha_k)^2 \lim_{t \to 0}  t^{-1} f_t(\alpha_k) \\
&= 0 - 4R_0\tilde{a}(\alpha_k)^2 \lim_{t \to 0}  t^{-1} f_t(\alpha_k).
\end{align*}
On the other hand, we can expand $a_t$ in $t$ and evaluate at $\alpha_k$:
\[
f_t(\alpha_k)\tilde{a}_t(\alpha_k)
= a_t(\alpha_k)
= a(\alpha_k) + t \dot{a}(\alpha_k) + O(t^2) 
= 0 + t \dot{a}(\alpha_k) + O(t^2).
\]
Putting these together yields
\[
\lim_{t \to 0} t^{-1} [a_t'(\alpha_t)]^2
= - 4R_0\tilde{a}(\alpha_k)^2 \lim_{t \to 0} t^{-1} \frac{t\dot{a}(\alpha_k)}{\tilde{a}_t(\alpha_k)}
= - 4R_0\tilde{a}(\alpha_k) \dot{a}(\alpha_k)
= - 2a''(\alpha_k) \dot{a}(\alpha_k),
\]
where the last equality is $a''(\alpha_k) = f_0''(\alpha_k)\tilde{a}(\alpha_k) = 2R_0\tilde{a}(\alpha_k)$.
This is nonzero by the assumption that $\alpha_k$ is a double root of $a$ and $\dot{a}(\alpha_k) \neq 0$.
For the numerator, $P_t$ is a polynomial of fixed degree in $\lambda$ whose coefficients are smooth in $t$ and vanish at $t=0$, so we Taylor expand each coefficient to first order in $t$:
\begin{align*}
\lim_{t \to 0} t^{-1} P_t(\lambda)
= 2a \ddot{b} + \dot{a} \dot{b} - \ddot{a} b - \ci\big(2\lambda \dot{a} c'-(\lambda \dot{a}'+ \dot{a})c\big).
\end{align*}
Therefore
\begin{align*}
\lim_{t \to 0} t^{-1} P_t(\alpha_t)
&= 0 + \dot{a}(\alpha_k) \dot{b}(\alpha_k) - 0 - \ci\big(2\alpha_k \dot{a}(\alpha_k) c'(\alpha_k)-(\alpha_k \dot{a}'(\alpha_k)+ \dot{a}(\alpha_k))c(\alpha_k)\big).
\end{align*}
In total we have
\[
\lim_{t \to 0} \Res_{\alpha_t} \left( \Omega_{j,t} \left[ \ci \frac{c}{\nu_t} - \dot{q}_{b_t} \right] \right)
= - \frac{u_{j}(\alpha_k)}{\alpha_k^2 a''(\alpha_k)} \left[ \dot{b}(\alpha_k) - 2\ci\alpha_k c'(\alpha_k) + \ci \alpha_k \frac{\dot{a}'(\alpha_k)}{\dot{a}(\alpha_k)}c(\alpha_k) + \ci c(\alpha_k) \right].
\]
For $k \neq j$ we have $u_j(\alpha_k) = 0$ by Remark~\ref{rem:normalized differentials}, so only the two roots that converge to $\alpha_j$ contribute, and they contribute equally. 
Therefore we have derived the formula
\[
\left.\frac{\dd}{\dd t}\int_{B_j} \Theta_{b_t} \right|_{t=0}
= - 4 \pi\ci \frac{u_{j}(\alpha_j)}{\alpha_j^2 a''(\alpha_j)} \left[ \dot{b}(\alpha_j) - 2\ci\alpha_j c'(\alpha_j) + \ci \alpha_j \frac{\dot{a}'(\alpha_j)}{\dot{a}(\alpha_j)}c(\alpha_j) + \ci c(\alpha_j) \right].
\]
Only the third term in the bracket is not linear in the vector $(\dot{a},\dot{b})$.
Substituting~\eqref{eq:whitham eval node} into the above removes the undesirable term and gives the formula for the derivative of the period.
\end{proof}

At a nodal curve the map~\eqref{eq:period map} is no longer defined, since the $B$-period of $\Theta_b$ along a cycle through a node diverges unless $b$ has a root there.
We therefore restrict to differentials in $\calB$ and consider
\[
\Phi=(\BPer,\BPer): \calB\oplus\calB \to \bbR^{2g},
\qquad
\calB\oplus\calB = \calU^g \times (\calB_a \oplus \calB_a),
\]
which is essentially~\eqref{eq:period map} extended to $a \in \calU^g \supset \calU^g_0$ but restricted to $b_k \in \calB_a \subset \calP^{g+1}_\bbR$.
It is smooth by Lemma~\ref{lem:b_omega smooth node}.

\begin{lemma}
\label{lem:whitham kernel node}
Let $a\in\calU^g_n$, with double roots $\alpha_1,\dots,\alpha_n \in \bbS^1$, and let $b_1,b_2\in\calB_a$ be linearly independent.
Assume that each $\alpha_j$ is a common root of $\calB_a$ of first order and that $\calB_a$ has no other common roots.
Write $a = p^2 \tilde{a}$ and $b_k = p \tilde{b}_k$ for $p \in \calP^n_\bbR$, and define
\[
D_j
=b_1'(\alpha_j)b_2''(\alpha_j)-b_2'(\alpha_j)b_1''(\alpha_j)
=2p'(\alpha_j)^2 \left[ \tilde{b}_1(\alpha_j)\tilde{b}_2'(\alpha_j)-\tilde{b}_2(\alpha_j)\tilde{b}_1'(\alpha_j) \right]
\]
for $j=1,\dots,n$.
We describe the kernel of $\dd\Phi$ at $(a,b_1,b_2)$.
\begin{enumerate}
\item
A tangent vector $(\dot{a},\dot{b}_1,\dot{b}_2)$ belongs to the kernel if and only if there are $c_1,c_2\in\calP^{g+1}_\bbR$ that solve the \emph{Whitham equations}:
\begin{align}
\label{eq:whitham ab node}
2p \tilde{a}\dot{b}_k - \dot{a}\tilde{b}_k = \ci(2\lambda p\tilde{a} c_k' - (2\lambda p' \tilde{a} + \lambda p\tilde{a}'+ p\tilde{a})c_k)
\end{align}
and
\begin{align}
\Delta_{k,j}
:= \dot{b}_k(\alpha_j) + \frac{b'_k(\alpha_j)}{a''(\alpha_j)}\dot{a}'(\alpha_j) - 2\ci\alpha_j c_k'(\alpha_j) + \ci c_k(\alpha_j) = 0
\end{align}
for $k=1,2$ and $j=1,\dots,n$.
Then there exists $Q\in\calP^2_\bbR$ such that the \emph{compatibility equation} holds:
\begin{align}
\label{eq:whitham Q node}
\tilde{b}_2c_1 - \tilde{b}_1c_2 = p\tilde{a}Q.
\end{align}
The polynomials $c_1,c_2$ and $Q$ are uniquely determined by $(\dot{a},\dot{b}_1,\dot{b}_2)$.
Further
\begin{equation}
\label{eq:node dot a}
2\ci\alpha_j\,a''(\alpha_j)^2\,Q(\alpha_j)
=
D_j\, \dot{a}(\alpha_j)
\end{equation}
for $j=1,\dots,n$.

\item
If $D_j \neq0$ for $j=1,\dots,n$, then $(\dot{a},\dot{b}_1,\dot{b}_2)\mapsto Q$ is an isomorphism from the kernel onto $\calP^2_\bbR$.
In particular the kernel is three-dimensional.

\item
If $D_j\neq0$ for $j=1,\dots,n-1$ but $D_n = 0$, then the kernel is three-dimensional and $Q$ maps it surjectively onto $\{Q\in\calP^2_\bbR\mid Q(\alpha_n)=0\}$.
The vectors with $Q=0$ form a line, along which $\dot{a}(\alpha_n)\neq0$.
\end{enumerate}
\end{lemma}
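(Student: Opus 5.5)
The plan is to assemble part~(i) from the computations that precede Lemma~\ref{lem:nodal B period derivative}, together with that lemma. Parts~(ii) and~(iii) then follow by the injectivity-plus-dimension-count argument of Lemma~\refinv{lem:whitham kernel}{2.2}, modified at the nodes.

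\emph{Part~(i): characterising the kernel.} Preserving the $A$-periods and the non-nodal $B$-periods is equivalent to $\dot{\Theta}_{b_k}=\dd(\ci c_k/\nu)$ with $c_k\in\calP^{g+1}_\bbR$, as shown above. This is the Whitham equation~\eqref{eq:whitham ab}. Substituting $a=p^2\tilde{a}$, $b_k=p\tilde{b}_k$ and $a'=2pp'\tilde{a}+p^2\tilde{a}'$, every term is divisible by $p$, and dividing by $p$ gives~\eqref{eq:whitham ab node}.

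Next, on the open dense set of $(\dot{a},\dot{b}_1,\dot{b}_2)$ with $\dot{a}(\alpha_j)\neq0$ for all $j$, Lemma~\ref{lem:nodal B period derivative} expresses the derivative of the nodal $B_j$-period. It equals a nonzero multiple of $\Delta_{k,j}$, since $u_j(\alpha_j)\neq0$ by Remark~\ref{rem:normalized differentials}. Both sides are linear in the tangent vector on the linear space of solutions of the Whitham equations: the map from the tangent vector to $c_k$ is linear, and the bracket in Lemma~\ref{lem:nodal B period derivative} is linear in $(\dot{a},\dot{b},c)$. Two linear maps that agree on a dense open set agree everywhere, so the vanishing of the nodal derivatives is equivalent to $\Delta_{k,j}=0$ with no restriction on $\dot{a}(\alpha_j)$.

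Uniqueness of $c_k$ follows because the primitive of an exact differential is determined up to a constant, and that constant is fixed by antisymmetry under $\sigma$.

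\emph{The compatibility equation.} Set $W=\tilde{b}_2c_1-\tilde{b}_1c_2$ and antisymmetrise~\eqref{eq:whitham ab node}. This shows that $p\tilde{a}$ divides $(2\lambda p'\tilde{a}+\lambda p\tilde{a}'+p\tilde{a})W$.
\begin{itemize}
\item Since $\tilde{a}$ has simple roots, none at $0$, we get $\tilde{a}\mid W$.
\item At each $\alpha_j$ we have $p'(\alpha_j)\neq0$ and $\tilde{a}(\alpha_j)\neq0$, so $p\mid W$.
\end{itemize}
Counting degrees, $\deg W\le 2g+2-n$ and $\deg(p\tilde{a})=2g-n$, so $Q$ has degree at most $2$. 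The reality condition on $Q$ follows from that of the factors.

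\emph{Relation~\eqref{eq:node dot a}.} Evaluating~\eqref{eq:whitham ab node} at $\alpha_j$ gives $c_k(\alpha_j)$ in terms of $\dot{a}(\alpha_j)\tilde{b}_k(\alpha_j)$. In particular $W(\alpha_j)=0$ holds automatically, so the information sits in $W'(\alpha_j)=p'(\alpha_j)\tilde{a}(\alpha_j)Q(\alpha_j)$. To evaluate $W'(\alpha_j)$ I proceed in three steps.
\begin{enumerate}
\item Use $\Delta_{k,j}=0$ to express $c_k'(\alpha_j)$ through $\dot{b}_k(\alpha_j)$, $\dot{a}'(\alpha_j)$ and $c_k(\alpha_j)$.
\item Use~\eqref{eq:whitham deriv eval node} to eliminate $\dot{b}_k(\alpha_j)$ in favour of $\dot{a}'(\alpha_j)$, $b_k''(\alpha_j)$ and $c_k(\alpha_j)$.
\item Substitute into $W'$ and antisymmetrise in $k$.
\end{enumerate}
The terms symmetric in $k$ should cancel, leaving $D_j\,\dot{a}(\alpha_j)$ up to the stated factor. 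This bookkeeping is the step I expect to be the main obstacle: the cancellation of the $\dot{a}'(\alpha_j)$ terms and the exact constant $2\ci\alpha_j a''(\alpha_j)^2$ must be checked carefully.

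\emph{Part~(ii).} Since $\dim(\calB\oplus\calB)=2g+3$ and the target is $\bbR^{2g}$, the kernel has dimension at least $3$. It therefore suffices to show that $Q=0$ forces the tangent vector to vanish. If $Q=0$ then $W=0$. As $\tilde{b}_1,\tilde{b}_2$ have no common root, $c_k=r\tilde{b}_k$ for a polynomial $r$ of degree at most $n$. By~\eqref{eq:node dot a} and $D_j\neq0$ we get $\dot{a}(\alpha_j)=0$. Then~\eqref{eq:whitham ab node} at $\alpha_j$ gives $c_k(\alpha_j)=0$, hence $r(\alpha_j)=0$, since the $\tilde{b}_k(\alpha_j)$ do not both vanish. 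So $r=sp$ with $s$ constant, and $c_k=s\,b_k$. This reduces to the situation of Lemma~\refinv{lem:whitham kernel}{2.2(ii)}, whose argument (comparing leading and constant coefficients in~\eqref{eq:whitham ab}) forces $s=0$ and then $\dot{a}=\dot{b}_k=0$. The injective map from a space of dimension at least $3$ into $\calP^2_\bbR$ is then an isomorphism.

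\emph{Part~(iii).} When $D_n=0$, relation~\eqref{eq:node dot a} forces $Q(\alpha_n)=0$, so the image of $Q$ lies in a $2$-dimensional space. Rerunning the argument of part~(ii) with $Q=0$, the roots of $r$ are now forced only at $\alpha_1,\dots,\alpha_{n-1}$. If moreover $\dot{a}(\alpha_n)=0$, the vector vanishes as before. So the vectors with $Q=0$ are determined by the value of $\dot{a}(\alpha_n)$, and they form a space of dimension at most $1$. Since the kernel has dimension at least $3$ and its image under $Q$ has dimension at most $2$, this space is exactly a line and $Q$ maps onto $\{Q\in\calP^2_\bbR\mid Q(\alpha_n)=0\}$. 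The kernel therefore has dimension exactly $3$, and $\dot{a}(\alpha_n)\neq0$ on this line, since a vector on it with $\dot{a}(\alpha_n)=0$ would vanish.
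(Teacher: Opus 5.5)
The genuine gap is in part~(ii), where you say that once $c_k=s\,b_k$ the situation reduces to Lemma~\refinv{lem:whitham kernel}{2.2(ii)}. Subtract $s$ times the trivial solution $\ci\bigl(\lambda a'-ga,\lambda b_k'-\tfrac{g+1}{2}b_k\bigr)$. The remainder solves~\eqref{eq:whitham ab node} with $c_k=0$, that is, $2p\tilde{a}\dot{b}_k=\dot{a}\tilde{b}_k$.

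For a nonsingular curve the analogous equation $2a\dot{b}_k=\dot{a}b_k$ forces $a\mid\dot{a}$ and leaves only the scaling $(2a,b_1,b_2)$. That is why comparing coefficients at $\lambda=0$ finishes the smooth case. At a nodal curve it only forces $p\tilde{a}\mid\dot{a}$. So the remainder is $w\,(2p\tilde{a},\tilde{b}_1,\tilde{b}_2)$ with $w\in\calP^n_\bbR$ arbitrary. This is an $(n+1)$-dimensional family, essentially the directions that slide the nodes with the normalization fixed.

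The condition $\dot{a}(0)=-\ci gs+2w(0)/p(0)=0$, at most two real equations, cannot remove this family. For example, take $s=1$ and any $w$ with $w(0)=\ci g\,p(0)/2$. The resulting vector is nonzero (its $c_k$ is $b_k$), has $Q=0$, and satisfies~\eqref{eq:node dot a} trivially because $\dot{a}(\alpha_j)=0$. Nothing you invoke excludes it, and the claim that coefficient comparison forces $s=0$ and then $\dot{a}=\dot{b}_k=0$ is false here.

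These vectors are excluded only by the part of the nodal conditions that~\eqref{eq:node dot a} does not see. The derivation of~\eqref{eq:node dot a} uses only $\tilde{b}_2\Delta_{1,j}-\tilde{b}_1\Delta_{2,j}$, which vanishes identically on this family. By contrast, $\Delta_{k,j}$ itself is $0$ on the trivial solution and equals $2w(\alpha_j)\tilde{b}_k(\alpha_j)$ on $w\,(2p\tilde{a},\tilde{b}_1,\tilde{b}_2)$. Imposing $\Delta_{k,j}=0$, and using that $\tilde{b}_1,\tilde{b}_2$ have no common root, gives $w(\alpha_j)=0$ for every $j$. Hence $w=s'p$ with $s'\in\bbR$, and only then does $\dot{a}(0)=-\ci gs+2s'=0$ give $s=s'=0$; this is the step the paper carries out. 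Your part~(iii) inherits the gap through \emph{the vector vanishes as before}. Once that is repaired, testing $\dot{a}(\alpha_n)$ instead of the paper's $c_1(\alpha_n)$ works equally well, since the two are tied by~\eqref{eq:whitham eval node}.

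Two smaller points. First, the bookkeeping you were worried about for~\eqref{eq:node dot a} does close. At $\alpha_j$, antisymmetrising $\Delta_{k,j}=0$ gives $\tilde{b}_2c_1'-\tilde{b}_1c_2'=(\tilde{b}_2\dot{b}_1-\tilde{b}_1\dot{b}_2)/(2\ci\alpha_j)$. Antisymmetrising~\eqref{eq:whitham deriv eval node} gives $\tilde{b}_2\dot{b}_1-\tilde{b}_1\dot{b}_2=\dot{a}\,(\tilde{b}_2\tilde{b}_1'-\tilde{b}_1\tilde{b}_2')/(2p'\tilde{a})$. With $c_k=\dot{a}\tilde{b}_k/(2\ci\alpha_jp'\tilde{a})$ these yield $p'\tilde{a}Q=W'=\dot{a}\,(\tilde{b}_1\tilde{b}_2'-\tilde{b}_2\tilde{b}_1')/(4\ci\alpha_jp'\tilde{a})$, which is~\eqref{eq:node dot a}.

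Second, in part~(i) the extension by linearity needs each functional $\dot{a}\mapsto\dot{a}(\alpha_j)$ to be nonzero on the space of solutions of the Whitham equations. Otherwise the dense set you extend from could be empty. You assert density without checking this. The paper exhibits the solution $c_k=u\tilde{b}_k$, $\dot{a}=\ci u(\lambda a'-ga)/p$ with $u(\alpha_j)\neq0$, corrected by the scaling and rotation solutions to achieve $\dot{a}(0)=0$.
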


Before we launch into the proof, let us provide some commentary on the definitions embedded in this statement.
The condition that there is a common root of $\calB_a$ of first order implies that $\tilde{b}_1,\tilde{b}_2$ do not have a common root at $\alpha_j$.
Together with the second hypothesis, $\tilde{b}_1$ and $\tilde{b}_2$ have no common roots at all.
For $g=n$ the polynomials $\tilde{b}_k$ are linear, so $D_j\neq0$ holds automatically and (ii) applies.
Because $g \geq n \geq 1$, there is no $g=0$ case to consider, as there was in Lemma~\refinv{lem:whitham kernel}{2.2}.
Under a change of basis $D_j$ is multiplied by the determinant, so whether $D_j$ vanishes does not depend on the basis of $\calB_a$.
For a basis with $b_1'(\alpha_j)\neq0$, l'Hôpital's rule at the simple common root $\alpha_j$ of $b_1,b_2$ gives $D_j=2b_1'(\alpha_j)^2f'(\alpha_j)$, where $f=b_2/b_1$ is the winding number function recalled in Section~\ref{sec:background}.
So the condition $D_j \neq 0$ says that $\frac{\dd}{\dd\lambda}\frac{b_2}{b_1}$ does not vanish at $\alpha_j$, which is the condition appearing in the second part of Theorem~\refinv{thm:Willmore derivative}{2.5}.
Alternatively, for each node $\alpha_j$ there is $b_3\in\calB_a$, unique up to scale, with $b_3'(\alpha_j)=0$, since $b\mapsto b'(\alpha_j)$ vanishes on a line in $\calB_a$.
Note, this gives different polynomials at different nodes.
Taking $b_2=b_3$ gives $D_j=b_1'(\alpha_j)b_3''(\alpha_j)$, so $D_j=0$ if and only if $b_3$ has a root of order at least three at $\alpha_j$.

\begin{proof}
The first two sentences of (i) are simply a summary of Lemma~\ref{lem:nodal B period derivative} and the discussion preceding it.
Specifically, there exists $(c_1,c_2,Q)$ solving~\eqref{eq:whitham ab} and~\eqref{eq:whitham Q} if and only if the derivatives of the $A$-periods and the $B_{n+1},\dots,B_g$-periods are zero.
After substituting in the factorizations $a = p^2 \tilde{a}$ and $b = p \tilde{b}$ and removing a factor of $p$ from both sides, we arrive at~\eqref{eq:whitham ab node} and~\eqref{eq:whitham Q node}.
The quantity $\Delta_{k,j}$ is exactly the bracket in Lemma~\ref{lem:nodal B period derivative}.
Since $u_j(\alpha_j)\neq0$ by Remark~\ref{rem:normalized differentials}, it is zero if and only if the derivative of the $B_j$-period of $\Theta_{b_k}$ is zero.
The polynomials $c_1,c_2$ and $Q$ are uniquely determined by $(\dot{a},\dot{b}_1,\dot{b}_2)$, by the same argument as Lemma~\refinv{lem:whitham kernel}{2.2}.
That argument does not use that the roots of $a$ are simple: the map $c\mapsto 2\lambda ac'-(\lambda a'+a)c = 2\nu^3 \frac{\dd}{\dd\lambda}(c/\nu)$ of~\eqrefinv{eq:c injective}{2.5} is injective on $\calP^{g+1}_\bbR$ because $c/\nu$ is constant only for $c=0$, and multiplication by $a$ is injective.

The vanishing of $\Delta_{k,j}$ imposes some conditions on $(c_1,c_2,Q)$ at $\alpha_j$, which lead to~\eqref{eq:node dot a}.
Evaluation of~\eqref{eq:whitham Q node} and its derivative at a node produces
\begin{gather*}
\tilde{b}_2 c_1 - \tilde{b}_1 c_2 \Big|_{\lambda=\alpha_j}
= p\tilde{a}Q\Big|_{\lambda=\alpha_j} = 0, \\
\tilde{b}'_2 c_1 - \tilde{b}'_1 c_2 + \tilde{b}_2 c'_1 - \tilde{b}_1 c'_2 \Big|_{\lambda=\alpha_j}
= (p\tilde{a}Q)'\Big|_{\lambda=\alpha_j} = p'\tilde{a}Q\Big|_{\lambda=\alpha_j}.
\end{gather*}
In terms of $\tilde{a}$ and $\tilde{b}_k$, and using~\eqref{eq:whitham eval node}, equation~\eqref{eq:whitham deriv eval node} reads
\[
2p'\tilde{a}\dot{b}_k - \dot{a}'\tilde{b}_k - \dot{a}\tilde{b}_k' \Big|_{\lambda=\alpha_j}
= - \ci\big( 3p'(\lambda\tilde{a})' + 2\lambda p''\tilde{a} \big)c_k \Big|_{\lambda=\alpha_j}.
\]
Antisymmetrizing gives
\begin{align*}
2p'\tilde{a}(\tilde{b}_2\dot{b}_1 - \tilde{b}_1\dot{b}_2) \Big|_{\lambda=\alpha_j}
&= 0 + \dot{a}(\tilde{b}_2\tilde{b}'_1 - \tilde{b}_1\tilde{b}'_2) - \ci\big(3p'(\lambda\tilde{a})' + 2\alpha_j p''\tilde{a}\big)(\tilde{b}_2 c_1 - \tilde{b}_1 c_2) \Big|_{\lambda=\alpha_j} \\
&= 2\ci\alpha_j p'\tilde{a}(c_2\tilde{b}'_1 - c_1\tilde{b}'_2) - 0 \Big|_{\lambda=\alpha_j},
\end{align*}
using~\eqref{eq:whitham eval node} for the first term and the first condition above for the second.
Since $b'_k/a'' = \tilde{b}_k/(2p'\tilde{a})$ at $\alpha_j$, antisymmetrizing $\Delta_{k,j}$ gives
\begin{align*}
2p'\tilde{a}(\tilde{b}_2 \Delta_{1,j} - \tilde{b}_1 \Delta_{2,j}) \Big|_{\lambda=\alpha_j}
&= 2p'\tilde{a}(\tilde{b}_2\dot{b}_1 - \tilde{b}_1\dot{b}_2)
+ 0 - 4\ci\alpha_j p'\tilde{a}(\tilde{b}_2 c'_1 - \tilde{b}_1 c'_2)
+ 0 \Big|_{\lambda=\alpha_j} \\
&= 2\ci\alpha_j p'\tilde{a} \left[ (c_2\tilde{b}'_1 - c_1\tilde{b}'_2)
- 2(\tilde{b}_2 c'_1 - \tilde{b}_1 c'_2) \right]\Big|_{\lambda=\alpha_j} \\
&= 2\ci\alpha_j p'\tilde{a} \left[ (\tilde{b}'_2 c_1 - \tilde{b}'_1 c_2)
- 2p'\tilde{a}Q \right]\Big|_{\lambda=\alpha_j},
\end{align*}
where the last equality substitutes the second condition above for $\tilde{b}_2 c'_1 - \tilde{b}_1 c'_2$.
Therefore we have
\[
\tilde{b}'_2(\alpha_j) c_1(\alpha_j) - \tilde{b}'_1(\alpha_j) c_2(\alpha_j)
= 2p'(\alpha_j)\tilde{a}(\alpha_j)Q(\alpha_j).
\]
Since $a''(\alpha_j)\neq0$, equation~\eqref{eq:whitham eval node} gives $c_k(\alpha_j)=\dot{a}(\alpha_j)b_k'(\alpha_j)/(\ci\alpha_ja''(\alpha_j))$.
Substituting this on the left, and using $b_k'=p'\tilde{b}_k$ and $a''=2{p'}^2\tilde{a}$ at $\alpha_j$, gives
\[
\frac{\dot{a}(\alpha_j)D_j}{2\ci\alpha_j p'(\alpha_j)a''(\alpha_j)}
= \frac{a''(\alpha_j)Q(\alpha_j)}{p'(\alpha_j)},
\]
which is~\eqref{eq:node dot a}.

Lemma~\ref{lem:nodal B period derivative}, which we used for the first claim of (i), carries the assumption that $\dot{a}(\alpha_j)\neq0$ for $j=1,\dots,n$.
We remove this assumption by linearity.
Consider the space of tangent vectors $(\dot{a},\dot{b}_1,\dot{b}_2)$ that are solutions of~\eqref{eq:whitham ab node}.
By uniqueness, $c_k$ is linear in $(\dot{a},\dot{b}_1,\dot{b}_2)$.
Therefore the derivative of the $B_j$-period of $\Theta_{b_k}$ and the right-hand side of the formula in Lemma~\ref{lem:nodal B period derivative} are both linear functionals on this space, and they agree outside the union of the subspaces $\{\dot{a}(\alpha_j)=0\}$, $j=1,\dots,n$.
If each of these subspaces is proper, the complement of their union is dense and the two functionals agree everywhere.
To see that each is a proper subspace, take $u\in\calP^n_\bbR$ with $u(\alpha_j)\neq0$ for $j=1,\dots,n$.
A product of factors $p_\beta$ with $\beta\in\bbS^1$ distinct from the nodes suffices.
Then $c_k=u\tilde{b}_k$ solves~\eqref{eq:whitham ab node} with
\[
\dot{a} = \ci u\,\frac{\lambda a'-ga}{p},
\qquad
\dot{b}_k = \frac{\ci}{2}\big(2\lambda c_k'-(g+1)c_k\big).
\]
Since $\lambda a'-ga$ vanishes at the double roots of $a$, it is divisible by $p$, and $\dot{a}\in\calP^{2g}_\bbR$ because a quotient of elements of $\calP^m_\bbR$ and $\calP^n_\bbR$ that is a polynomial lies in $\calP^{m-n}_\bbR$.
This vector has $\dot{a}(\alpha_j) = 2\ci\alpha_j p'(\alpha_j)\tilde{a}(\alpha_j)u(\alpha_j)\neq0$.
Adding real multiples of $(2a,b_1,b_2)$, which solves~\eqref{eq:whitham ab} with $c_k=0$ and has $\dot{a}(0)=2$, and of the trivial solution $\ci\big(\lambda a'-ga,\lambda b_1'-\tfrac{g+1}{2}b_1,\lambda b_2'-\tfrac{g+1}{2}b_2\big)$, which has $c_k=b_k$ and $\dot{a}(0)=-\ci g$, achieves $\dot{a}(0)=0$, and does not change $\dot{a}(\alpha_j)$ since $a$ and $\lambda a'-ga$ vanish at $\alpha_j$.
Therefore this vector lies outside of all the subspaces $\{\dot{a}(\alpha_j)=0\}$.
This completes the proof of (i).

Since $\dim(\calB\oplus\calB) - 2g = (2g-1) + 4 - 2g = 3$, the dimension of $\ker \dd\Phi$ is at least $3$.
To prove (ii) we will show that the map from the kernel to $Q \in \calP^2_\bbR$ is injective.
It will then follow that the dimension of the kernel is at most $\dim \calP^2_\bbR = 3$ and therefore exactly $3$.
Assume therefore that $(\dot{a},\dot{b}_1,\dot{b}_2) \in \ker \dd\Phi$ and that the corresponding $Q = 0$.
By assumption, $\tilde{b}_1$ and $\tilde{b}_2$ are coprime.
Hence the solutions $(c_1,c_2)$ of~\eqref{eq:whitham Q node} are $v (\tilde{b}_1,\tilde{b}_2)$ for $v \in \calP^n_\bbR$.
Since $D_j \neq 0$, equation~\eqref{eq:node dot a} gives $\dot{a}(\alpha_j)=0$, and then~\eqref{eq:whitham eval node} shows that $(c_1,c_2)$ have a root at $\alpha_j$.
Thus $v(\alpha_j) = 0$ for $j=1,\dots,n$ implies that $v$ is a real multiple of $p$, and $(c_1,c_2) = r (b_1,b_2)$ for $r \in \bbR$.

The solutions of the Whitham equations with $(c_1,c_2) = r (b_1,b_2)$ are
\[
(\dot{a},\dot{b}_1,\dot{b}_2)
= r\ci\Big(\lambda a'-ga,\lambda b_1'-\tfrac{g+1}{2}b_1,\lambda b_2'-\tfrac{g+1}{2}b_2\Big) + w (2p\tilde{a},\tilde{b}_1,\tilde{b}_2)
\]
for some $w \in \calP^n_\bbR$ (compare~\cite[Lemma~4.6]{Carberry2019}).
Imposing $\Delta_{k,j}=0$ yields
\[
0 = \Delta_{k,j}
= r\ci\big(\alpha_j b_k' + \alpha_j b_k' - 2\alpha_j b_k' + 0\big) + \big(w\tilde{b}_k + w\tilde{b}_k\big) \Big|_{\lambda=\alpha_j}
= 2w(\alpha_j)\tilde{b}_k(\alpha_j).
\]
Since $\tilde{b}_1$ and $\tilde{b}_2$ have no common root, $w(\alpha_j)=0$ for $j=1,\dots,n$.
Again, we conclude that $w = s p$ for some $s \in \bbR$.
Hence $(\dot{a},\dot{b}_1,\dot{b}_2) = r\ci\big(\lambda a'-ga,\lambda b_1'-\tfrac{g+1}{2}b_1,\lambda b_2'-\tfrac{g+1}{2}b_2\big) + s(2a,b_1,b_2)$.
Since $a(0)=1$, evaluating the first component at $\lambda=0$ gives
\[
0 = \dot{a}(0) = -\ci g r + 2s.
\]
As $g\geq1$, both $r,s\in\bbR$ vanish, and therefore $(\dot{a},\dot{b}_1,\dot{b}_2)=0$.
This proves that the map from the kernel to $Q$ is injective, and hence (ii).

For (iii), suppose that $D_j \neq 0$ for $j=1,\dots,n-1$ but $D_n = 0$.
Since $\tilde{b}_1$ and $\tilde{b}_2$ have no common root, we assume $\tilde{b}_1(\alpha_n) \neq 0$ without loss of generality.
Since $a''(\alpha_n)\neq0$, equation~\eqref{eq:node dot a} at $\alpha_n$ shows that $Q(\alpha_n) = 0$.
By the reality condition, $\alpha_n^{-1}Q(\alpha_n)$ is real, so this is a single real condition and $\{Q\in\calP^2_\bbR \mid Q(\alpha_n)=0\}$ is two-dimensional.
So $Q$ alone cannot be injective on the kernel.
Instead we show that the map
\[
(\dot{a},\dot{b}_1,\dot{b}_2) \mapsto \big(\alpha_n^{-(g+1)/2}c_1(\alpha_n), Q\big)
\in \bbR \times \{Q\in\calP^2_\bbR \mid Q(\alpha_n)=0\}
\]
is injective on the kernel, where $\alpha_n^{-(g+1)/2}c_k(\alpha_n)$ is real by the reality condition $c \in \calP^{g+1}_\bbR$ for any fixed choice of the square root.

Suppose therefore that $c_1(\alpha_n) = 0$ and $Q = 0$.
As in (ii), $(c_1,c_2) = v(\tilde{b}_1,\tilde{b}_2)$ with $v(\alpha_j) = 0$ for $j=1,\dots,n-1$.
The only step that changes is at $\alpha_n$, where $D_n\neq0$ is no longer available.
Instead, we have directly $v(\alpha_n)\tilde{b}_1(\alpha_n) = c_1(\alpha_n) = 0$, and $\tilde{b}_1(\alpha_n) \neq 0$ gives $v(\alpha_n) = 0$.
Hence $v$ is again a real multiple of $p$, and the rest of the proof of (ii) applies verbatim.
Since the kernel has dimension at least $3$, this injective map is an isomorphism onto the $3$-dimensional space $\bbR \times \{Q\in\calP^2_\bbR \mid Q(\alpha_n)=0\}$.
In particular $Q$ maps the kernel onto $\{Q\in\calP^2_\bbR \mid Q(\alpha_n)=0\}$, and the vectors with $Q=0$ form a line on which $c_1(\alpha_n)\neq0$.
By~\eqref{eq:whitham eval node}, $\dot{a}(\alpha_n)b_1'(\alpha_n) = \ci\alpha_n a''(\alpha_n)c_1(\alpha_n)$, so $\dot{a}(\alpha_n)\neq0$ along this line.
\end{proof}

Let us discuss the implications of Lemma~\ref{lem:whitham kernel node} for transversality to the set of nodal curves.
Near a nodal curve $a \in \calU^g_n \subset \calU^g$, which includes the case $a \in \calH^g_n$, for each node $\alpha_j$ the polynomials that keep a double root near $\alpha_j$ form a hypersurface $\calN_j$.
On one side of $\calN_j$ the two roots near $\alpha_j$ form a conjugate pair off $\bbS^1$, and on the other side they are two roots on $\bbS^1$, compare Figure~\ref{fig:chord}.
Its tangent space is $T_a\calN_j=\{\dot{a}\mid\dot{a}(\alpha_j)=0\}$.
This can be seen by differentiating $a_t(\alpha_t)=0$ along a path in $\calN_j$ with double root $\alpha_t$; one obtains $\dot{a}(\alpha_j)=0$, because $a'(\alpha_j)=0$.
For $j$ with $D_j\neq0$, equation~\eqref{eq:node dot a} shows that a vector of $\ker \dd\Phi$ is tangent to $\calN_j$ if and only if $Q(\alpha_j)=0$.
Suppose first that $D_j\neq0$ for $j=1,\dots,n$.
By Lemma~\ref{lem:whitham kernel node}(ii), $(\dot{a},\dot{b}_1,\dot{b}_2)\mapsto Q$ is an isomorphism $\ker \dd\Phi\to\calP^2_\bbR$, under which $\ker \dd\Phi\cap T_a\calN_j$ corresponds to the plane $\{Q\in\calP^2_\bbR\mid Q(\alpha_j)=0\}$.
So $\ker \dd\Phi$ is transverse to every $\calN_j$.
If instead $D_n=0$, as in Lemma~\ref{lem:whitham kernel node}(iii), then $(\dot{a},\dot{b}_1,\dot{b}_2)\mapsto Q$ maps $\ker \dd\Phi$ onto $\{Q\in\calP^2_\bbR\mid Q(\alpha_n)=0\}$.
For $j<n$ this plane is not $\{Q\in\calP^2_\bbR\mid Q(\alpha_j)=0\}$, so $\ker \dd\Phi\not\subset T_a\calN_j$; and for $j=n$ Lemma~\ref{lem:whitham kernel node}(iii) gives a vector with $\dot{a}(\alpha_n)\neq0$.
So in both cases $\ker \dd\Phi$ is transverse to every $\calN_j$.

Along a path in $\calN_1\cap\dots\cap\calN_n$, that is, a path that keeps all $n$ double roots on $\bbS^1$, the factorizations $a = p^2\tilde{a}$ and $b_k = p\tilde{b}_k$ persist, with $p$ now varying through the positions of the double roots.
Hence $\dot{a} = 2p\dot{p}\tilde{a} + p^2\dot{\tilde{a}}$ and $\dot{b}_k = \dot{p}\tilde{b}_k + p\dot{\tilde{b}}_k$.
Since $\dot{a}(\alpha_j) = 0$, equation~\eqref{eq:whitham eval node} gives $c_k(\alpha_j) = 0$ for every $j$, so $c_k = p\tilde{c}_k$ with $\tilde{c}_k \in \calP^{\hat{g}+1}_\bbR$.
Substituting into~\eqref{eq:whitham ab}, a factor of $p^3$ cancels from both sides and we obtain
\[
2\tilde{a}\dot{\tilde{b}}_k - \dot{\tilde{a}}\tilde{b}_k = \ci\big(2\lambda\tilde{a}\tilde{c}_k' - (\lambda\tilde{a}' + \tilde{a})\tilde{c}_k\big),
\]
and similarly~\eqref{eq:whitham Q} becomes $\tilde{b}_2\tilde{c}_1 - \tilde{b}_1\tilde{c}_2 = \tilde{a}Q$.
These are the Whitham equations for $(\tilde{a},\tilde{b}_1,\tilde{b}_2)$, with the same $Q$.
After the rotation~\eqref{eq:normalization}, they become the Whitham equations~\eqref{eq:whitham ab} and~\eqref{eq:whitham Q} for $(\hat{a},\hat{b}_1,\hat{b}_2)$ on the normalization.
This explains intuitively why the nodal $B$-periods are `invisible' to the Whitham equations.

\begin{theorem}
\label{thm:Whitham deformation node}
Let $g\geq2$ and let $a\in\calU^g_n$ with $1\leq n\leq g$; that is, $a$ has exactly $n$ double roots $\alpha_1,\dots,\alpha_n\in\bbS^1$ and otherwise simple roots.
Further suppose that each $\alpha_j$ is a common root of $\calB_a$ of first order, that $\calB_a$ has no other common roots, and that $D_j\neq0$ for $j=1,\dots,n-1$.
\begin{enumerate}
\item
The isoperiodic set $\calU^g(a)$ is locally at $a$ a three-dimensional submanifold of $\calU^g$, transverse to every $\calN_j$.
\item
The isoperiodic set $\calU^g(a,\tau)$ is locally at $a$ a one-dimensional submanifold of $\calU^g$, transverse to $\calN_n$.
For each $j=1,\dots,n-1$, it is transverse to $\calN_j$ if and only if $D_n\neq0$.
\end{enumerate}
\end{theorem}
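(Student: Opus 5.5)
We argue as in the proof of Theorem~\refinv{thm:Whitham deformation}{2.3}, with Lemma~\ref{lem:whitham kernel node} in place of Lemma~\refinv{lem:whitham kernel}{2.2}. Consider the smooth map $\Phi=(\BPer,\BPer)\colon\calB\oplus\calB\to\bbR^{2g}$, which is smooth by Lemma~\ref{lem:b_omega smooth node}. Its domain has dimension $(2g-1)+4$. By Lemma~\ref{lem:whitham kernel node}(ii) or (iii), according as $D_n\neq0$ or $D_n=0$, the kernel of $\dd\Phi$ at $(a,b_1,b_2)$ is exactly three-dimensional. Hence $\dd\Phi$ is surjective, and the level set of $\Phi$ through $(a,b_1,b_2)$ is locally a $3$-dimensional submanifold. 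Because $\BPer_{\tilde a}|_{\calB_{\tilde a}}$ is injective for $g\geq2$ (Lemma~\ref{lem:b_omega smooth node}), this level set is locally a graph over its projection to $\calU^g$: the frame $(b_1,b_2)$ with prescribed periods is uniquely determined by $\tilde a$. Its projection is $\calU^g(a)$. To see that the projection is an immersion, note that a kernel vector with $\dot a=0$ has $\dot b_k\in\calB_a$ with vanishing $B$-periods, so $\dot b_k=0$ by injectivity. Thus $\calU^g(a)$ is locally a $3$-manifold. Transversality to every $\calN_j$ is then exactly the discussion following Lemma~\ref{lem:whitham kernel node}: in case (ii) via $Q$ and the planes $Q(\alpha_j)=0$, and in case (iii) via the extra vector with $\dot a(\alpha_n)\neq0$. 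This proves (i).

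For (ii), we use the frame of constant periods to define $\tau=b_2(0)/b_1(0)$ on $\calU^g(a)$. The relation \eqref{eq:dot tau Q} follows from the constant coefficients of \eqref{eq:whitham ab} and \eqref{eq:whitham Q}, which are unchanged at nodal curves since \eqref{eq:whitham ab node} and \eqref{eq:whitham Q node} are just these equations divided by $p$. So $\dd\tau$ is a nonzero multiple of $Q(0)$ on $T_a\calU^g(a)$. In both cases (ii) and (iii) of the lemma, the image of $Q$ contains an element with $Q(0)\neq0$: in case (iii) the image is $\{Q(\alpha_n)=0\}$, which contains $\lambda^2-\alpha_n^2$ suitably rotated to be real, since $\alpha_n\neq0$. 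Hence $\tau$ is a submersion, and $\calU^g(a,\tau)$ is locally a $1$-manifold whose tangent line consists of the kernel vectors with $Q\in\bbR\lambda$.

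The remaining step is to identify that tangent line and test it against each $T_a\calN_j=\{\dot a(\alpha_j)=0\}$. If $D_n\neq0$, the line corresponds to $Q=\lambda$. For every $j$, \eqref{eq:node dot a} gives $D_j\dot a(\alpha_j)=2\ci\alpha_j^2a''(\alpha_j)^2\neq0$, so the line is transverse to all $\calN_j$. If $D_n=0$, then $Q(\alpha_n)=0$ together with $Q\in\bbR\lambda$ forces $Q=0$. So the tangent line is the line with $Q=0$ from Lemma~\ref{lem:whitham kernel node}(iii), along which $\dot a(\alpha_n)\neq0$, giving transversality to $\calN_n$. For $j<n$ we have $D_j\neq0$ and $Q(\alpha_j)=0$, so \eqref{eq:node dot a} yields $\dot a(\alpha_j)=0$, and the line is tangent to $\calN_j$. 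This gives the stated equivalence.

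The main obstacle I expect is verifying that the projection to $\calU^g$ is a local embedding, rather than merely an immersion, near nodal $a$. This should follow from the injectivity of $\BPer$ on $\calB_{\tilde a}$ uniformly in a neighbourhood, which comes from the smooth bundle structure and the full-rank statement of Lemma~\ref{lem:b_omega smooth node}.
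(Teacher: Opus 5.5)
Your argument is the paper's proof in all essentials. You use the three-dimensional kernel from Lemma~\ref{lem:whitham kernel node}(ii) or (iii), and project the level set of $\Phi$ injectively and immersively to $\calU^g$ using injectivity of $\BPer$ on $\calB_{\tilde a}$. You read off transversality from the discussion after that lemma. For part (ii) you use formula \eqref{eq:dot tau Q} followed by the same case analysis with $Q=\lambda$ and $Q=0$.

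One step is not justified as written. Since $\tau$ is complex-valued, $\calU^g(a,\tau)$ is one-dimensional only if $\dd\tau$ has real rank $2$ on the three-dimensional space $T_a\calU^g(a)$. That means $Q(0)$ must range over all of $\bbC$ on the image of $Q$. Exhibiting a single element of that image with $Q(0)\neq0$ (your rotated $\lambda^2-\alpha_n^2$) gives only rank at least $1$, which leaves $\ker\dd\tau$ possibly two-dimensional.

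The repair is already in your own text:
\begin{itemize}
\item For $D_n\neq0$ the image is all of $\calP^2_\bbR$, so $Q(0)$ is an arbitrary complex number.
\item For $D_n=0$, an element of $\{Q\in\calP^2_\bbR\mid Q(\alpha_n)=0\}$ with $Q(0)=0$ equals $Q'(0)\lambda$, and is therefore zero. Hence $Q\mapsto Q(0)$ maps this plane isomorphically onto $\bbC$. This is exactly the computation in your last paragraph, and it is how the paper argues.
\end{itemize}
Equivalently, your final paragraph shows in both cases that $\ker\dd\tau\cap T_a\calU^g(a)$ is a line (the vectors with $Q\in\bbR\lambda$, respectively with $Q=0$), and rank--nullity then gives rank $2$. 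You only need to place that computation before, rather than after, the assertion that $\tau$ is a submersion.
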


\begin{proof}
\emph{(i).}
By Lemma~\ref{lem:whitham kernel node}(ii) if $D_n \neq 0$, or by Lemma~\ref{lem:whitham kernel node}(iii) if $D_n = 0$, the kernel $\ker \dd\Phi$ is three-dimensional.
By the constant rank theorem, the isoperiodic set of $(a,b_1,b_2)$ is locally a three-dimensional submanifold of $\calB\oplus\calB$.
By the same argument as Theorem~\refinv{thm:Whitham deformation}{2.3}, with Lemma~\ref{lem:b_omega smooth node} in place of Lemma~\refinv{lem:differential period}{1.5}, the projection $\pi$ from this isoperiodic set to $\calU^g$ is a proper injective immersion.
Hence $\calU^g(a)$ is a three-dimensional submanifold.
Its tangent space $\dd\pi(\ker \dd\Phi)$ is transverse to every $\calN_j$, as shown in the discussion before the theorem.

\emph{(ii).}
As in the proof of Theorem~\refinv{thm:Whitham deformation}{2.3}, $\calU^g(a,\tau)$ is the subset of $\calU^g(a)$ on which $\tau=b_2(0)/b_1(0)$ is constant.
The formula~\eqref{eq:dot tau Q} for $\dot{\tau}$ only uses~\eqref{eq:whitham ab} and~\eqref{eq:whitham Q} at $\lambda=0$.
These follow from~\eqref{eq:whitham ab node} and~\eqref{eq:whitham Q node} on multiplying by $p$, so by Lemma~\ref{lem:whitham kernel node}(i) the formula holds on $T_a\calU^g(a)$.
If $D_n\neq0$, then $Q$ ranges over $\calP^2_\bbR$, so $Q(0)$ ranges over $\bbC$, and the kernel of $\dd\tau$ is $\{Q\in\bbR\lambda\}$, by the reality condition.
For nonzero $Q\in\bbR\lambda$ we have $Q(\alpha_j)\neq0$, and hence $\dot{a}(\alpha_j)\neq0$ by~\eqref{eq:node dot a}, for every $j$.
Thus $\calU^g(a,\tau)$ is not tangent to any $\calN_j$, which for a curve is the same as being transverse.
If $D_n=0$, then $Q$ ranges over $\{Q\in\calP^2_\bbR\mid Q(\alpha_n)=0\}$.
If moreover $Q(0)=0$, then $Q=Q'(0)\lambda$ with $Q'(0)\alpha_n=0$, so $Q=0$.
Hence $Q\mapsto Q(0)$ maps $\{Q\in\calP^2_\bbR\mid Q(\alpha_n)=0\}$ isomorphically onto $\bbC$, and the kernel of $\dd\tau$ is the kernel of $(\dot{a},\dot{b}_1,\dot{b}_2)\mapsto Q$, which by Lemma~\ref{lem:whitham kernel node}(iii) is spanned by a vector with $\dot{a}(\alpha_n)\neq0$.
So $\calU^g(a,\tau)$ is not tangent to $\calN_n$.
Since this vector has $Q=0$ and $D_j\neq0$ for $j<n$, equation~\eqref{eq:node dot a} gives $\dot{a}(\alpha_j)=0$, so $\calU^g(a,\tau)$ is tangent to $\calN_j$ for every $j<n$.
In both cases $\dd\tau$ is surjective on $T_a\calU^g(a)$, so $\calU^g(a,\tau)$ is a one-dimensional submanifold, with the stated transversality.
\end{proof}

We now turn our attention from whether the nodes open to how they open.
Assume the hypotheses of Lemma~\ref{lem:whitham kernel node} with $D_j\neq0$ for $j=1,\dots,n$, and let $(\dot{a},\dot{b}_1,\dot{b}_2)\in\ker \dd\Phi$.
This direction opens every node if and only if $Q(\alpha_j)\neq0$ for all $j$, which holds outside finitely many planes in $\calP^2_\bbR$.
A double root may split into a conjugate pair off $\bbS^1$ or two simple roots on $\bbS^1$.
For $n\leq3$, the at most three planes $\{Q\in\calP^2_\bbR\mid Q(\alpha_j)=0\}$ are in general position in the $3$-dimensional space $\calP^2_\bbR$, and so there are directions that split all double roots into conjugate pairs.
For $n\geq4$ this is not true in general.

We can describe these directions more concretely.
Since $\alpha_j$ is a local minimum of the real function $\lambda^{-g}a$ on $\bbS^1$, the sign of $\alpha_j^{-g}\dot{a}(\alpha_j)$ decides how the node $\alpha_j$ opens to first order: if it is positive then the double root splits into a conjugate pair off $\bbS^1$, and if it is negative then it splits into two simple roots on $\bbS^1$.
For each node $\alpha_j$, we may rewrite~\eqref{eq:node dot a} as
\begin{equation}
\label{eq:node opening}
\alpha_j^{-g}\dot{a}(\alpha_j)=\sigma_j\,\alpha_j^{-1}Q(\alpha_j),
\end{equation}
where we fix a basis with $b_1'(\alpha_j)\neq0$ for all $j=1,\dots,n$.
Such a basis exists because each $\alpha_j$ is a first-order common root of $\calB_a$, so $b\mapsto b'(\alpha_j)$ vanishes only on a proper subspace of $\calB_a$.
Then
\begin{align*}
\sigma_j
=\frac{2\ci\alpha_j^{2-g}a''(\alpha_j)^2}{D_j}
=\frac{\ci\alpha_j^{2-g}a''(\alpha_j)^2}{b_1'(\alpha_j)^2f'(\alpha_j)}
=\frac{\big(-\alpha_j^{2-g}a''(\alpha_j)\big)^2}{\big(-\alpha_j^{1-g}b_1'(\alpha_j)^2\big)\,\ci\alpha_jf'(\alpha_j)}
=\frac{|a''(\alpha_j)|^2}{|b_1'(\alpha_j)|^2\,\ci\alpha_jf'(\alpha_j)}.
\end{align*}
The second equality is $D_j=2b_1'(\alpha_j)^2f'(\alpha_j)$, from the discussion after Lemma~\ref{lem:whitham kernel node}.
For the last, write $\lambda = \alpha_je^{\ci\phi}$.
Then $-\alpha_j^{2-g}a''(\alpha_j)$ is the second derivative in $\phi$ at $\phi=0$ of the real function $\lambda^{-g}a$, which is positive since $\alpha_j$ is a nondegenerate local minimum.
Similarly $-\alpha_j^{1-g}b_1'(\alpha_j)^2$ is the square of the first derivative in $\phi$ of the real function $\lambda^{-(g+1)/2}b_1$ at its root $\phi=0$, which is positive.
The factor $\ci\alpha_jf'(\alpha_j)$ is real, since $f$ is real along $\bbS^1$.
Hence $\sigma_j$ is a nonzero real number.
Changing the basis $(b_1,b_2)$ multiplies $D_j$ and $Q$ by its determinant, and so can only preserve or reverse all the signs of $\sigma_j$ simultaneously.
Recall that the condition $D_j\neq0$ is that $\dd f$ does not vanish at $\alpha_j$.
Consequently, all $\sigma_j$ have the same sign if $\dd f$ has no roots on $\bbS^1$.

Hence the double root $\alpha_j$ splits into a conjugate pair off $\bbS^1$ if and only if $\alpha_j^{-1}Q(\alpha_j)$ has the sign of $\sigma_j$.
The pattern of signs of $\alpha_j^{-1}Q(\alpha_j)$, $j=1,\dots,n$, is rather rigid.
The function $\lambda^{-1}Q$ has up to $2$ roots on $\bbS^1$, so it partitions $\bbS^1$ into two open arcs (possibly empty): one on which it is positive and one on which it is negative.
Ordering the nodes $\alpha_j$ cyclically, the signs are partitioned into two (possibly empty) blocks.
Therefore if the signs of $\sigma_j$, which depend only on $(a,b_1,b_2)$, are not an arc of positives followed by an arc of negatives, there is no $Q$ that splits all double roots into conjugate pairs.
For $a \in \calH^g_n$, this means we cannot reach $\calH^g_0$ along $\calU^g(a)$, at least infinitesimally.
Along $\calU^g(a,\tau)$ the situation is even more rigid.
Its tangent has $Q\in\bbR\lambda$, so $\alpha_j^{-1}Q(\alpha_j)=Q'(0)$ for every $j$.
Therefore we can open $\calH^g_n$ into $\calH^g_0$ along $\calU^g(a,\tau)$ if and only if all $\sigma_j$ have the same sign.

\begin{corollary}
\label{cor:Willmore derivative node}
Let $a$ be as in Theorem~\ref{thm:Whitham deformation node}, and evaluate $\willmore$ in a frame with constant periods along $\calU^g(a,\tau)$, as in Theorem~\refinv{thm:Willmore derivative}{2.5}.
Then $\dot{\willmore}=0$ at $a$ if and only if $\frac{\dd}{\dd\lambda}\frac{b_2}{b_1}$ vanishes at $\alpha_n$.
\end{corollary}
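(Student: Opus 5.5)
The plan is to reduce the statement to the formula $\dot{\willmore}\operatorname{Area}=4Q'(0)$ recalled in Section~\ref{sec:background}, and then read off from Lemma~\ref{lem:whitham kernel node} when the tangent line of $\calU^g(a,\tau)$ has $Q'(0)\neq0$. First I will check that this formula still holds at the nodal curve $a$. Along $\calU^g(a,\tau)$ we use a frame $(b_1,b_2)$ with constant periods, so the area in the denominator of~\eqref{eq:willmore} is constant. The numerator involves only the coefficients $b_k(0)$ and $b_k'(0)$. Its derivative is computed, exactly as in~\cite{Hasse2026a}, by comparing the constant and linear coefficients of~\eqref{eq:whitham ab} and~\eqref{eq:whitham Q}. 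These equations hold on the whole kernel of $\dd\Phi$ at $a$ by Lemma~\ref{lem:whitham kernel node}(i), after multiplying~\eqref{eq:whitham ab node} and~\eqref{eq:whitham Q node} by $p$, just as in the proof of Theorem~\ref{thm:Whitham deformation node}(ii). Moreover $\willmore$ is smooth on $\calU^g$, by Lemma~\ref{lem:b_omega smooth node}. So $\dot{\willmore}\operatorname{Area}=4Q'(0)$ holds for tangent vectors at $a$, and $\operatorname{Area}\neq0$ by the full rank statement there.

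Next I will identify the tangent line of $\calU^g(a,\tau)$, following the proof of Theorem~\ref{thm:Whitham deformation node}(ii), and split into two cases.

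If $D_n\neq0$, then all $D_j\neq0$, and by Lemma~\ref{lem:whitham kernel node}(ii) the map to $Q$ is an isomorphism. The tangent line is then $\{Q\in\bbR\lambda\}$, and a nonzero tangent vector has $Q'(0)\neq0$, so $\dot{\willmore}\neq0$.

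If $D_n=0$, the proof of Theorem~\ref{thm:Whitham deformation node}(ii) shows that the tangent line is the kernel of $(\dot{a},\dot{b}_1,\dot{b}_2)\mapsto Q$ from Lemma~\ref{lem:whitham kernel node}(iii). There $Q=0$, hence $Q'(0)=0$ and $\dot{\willmore}=0$.

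Thus $\dot{\willmore}=0$ if and only if $D_n=0$. Finally I will translate this condition using the discussion after Lemma~\ref{lem:whitham kernel node}. Choose a basis with $b_1'(\alpha_n)\neq0$; then $D_n=2b_1'(\alpha_n)^2f'(\alpha_n)$ with $f=b_2/b_1$, so $D_n=0$ exactly when $\frac{\dd}{\dd\lambda}\frac{b_2}{b_1}$ vanishes at $\alpha_n$. Vanishing of $D_n$ is basis independent, so the condition is well posed. If the given frame happens to have $b_1'(\alpha_n)=0$, the quotient should be read via the basis-independent $D_n$, or after a change of basis.

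The main point needing care is the first step: checking that the derivation of $\dot{\willmore}\operatorname{Area}=4Q'(0)$ in~\cite{Hasse2026a} uses only the Whitham equations near $\lambda=0$ and never simplicity of the roots of $a$. I expect this to be routine, since $a(0)=1$ and the nodes lie on $\bbS^1$, away from $\lambda=0$.
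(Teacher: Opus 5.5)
Your proposal is correct and follows essentially the same route as the paper. You verify that $\dot{\willmore}\operatorname{Area}=4Q'(0)$ survives at the nodal curve, since it uses only low-order coefficients of the Whitham equations, which hold at $a$ by Lemma~\ref{lem:whitham kernel node}(i) after multiplying by $p$; then you split into $D_n\neq0$ (tangent with $Q\in\bbR\lambda\setminus\{0\}$) and $D_n=0$ (tangent with $Q=0$), and translate via $D_n=2b_1'(\alpha_n)^2f'(\alpha_n)$, with your remarks on $\operatorname{Area}\neq0$ and on the choice of basis being harmless refinements of points the paper leaves implicit.
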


\begin{proof}
The computation $\dot{\willmore}\operatorname{Area}=4Q'(0)$ of~\eqrefinv{eq:Willmore derivative}{2.10} only involves the coefficients of $\lambda^0$ and $\lambda^1$ in~\eqref{eq:whitham ab} and~\eqref{eq:whitham Q}.
These hold at $a$ by Lemma~\ref{lem:whitham kernel node}(i).
By the proof of Theorem~\ref{thm:Whitham deformation node}(ii), along $\calU^g(a,\tau)$ we know $Q\in\bbR\lambda$ if $D_n\neq0$, and $Q=0$ if $D_n=0$.
In the first case $Q'(0)\neq0$, since the tangent is nonzero and $(\dot{a},\dot{b}_1,\dot{b}_2)\mapsto Q$ is injective on $\ker \dd\Phi$ by Lemma~\ref{lem:whitham kernel node}(ii), so $\dot{\willmore}\operatorname{Area}=4Q'(0)\neq0$ by~\eqrefinv{eq:Willmore derivative}{2.10}.
In the second case $\dot{\willmore}=0$.
Finally, by the discussion after Lemma~\ref{lem:whitham kernel node}, $D_n=0$ if and only if $\frac{\dd}{\dd\lambda}\frac{b_2}{b_1}$ vanishes at $\alpha_n$.
\end{proof}

This is the extension of Theorem~\refinv{thm:Willmore derivative}{2.5} to nodal curves.
In the nonsingular case, $\dot{\willmore}$ vanishes along $\calH^g_0(a,\tau)$ exactly when $\calB_a$ has a common root on $\bbS^1$, that is, $a \in \calS^g$.
By continuity, $\dot{\willmore}$ vanishes on nodal curves that lie in the closure of $\calS^g$.
In contrast, $\calB_a$ is forced to have a common root at a node, yet $\dot{\willmore}$ does not have to vanish.
The common roots at nodes are in a sense ``freebies''.
The following lemma makes this precise: a nodal curve with $D_j\neq0$ at every node does not lie in the closure of $\calS^g$, consistent with Corollary~\ref{cor:Willmore derivative node}.
We expect a converse (under a suitable nondegeneracy condition), that a nodal curve with $D_n=0$ lies in the closure of $\calS^g$, and hope to consider this in future work.

\begin{lemma}
\label{lem:closure S}
Let $a\in\calU^g_n$ be such that each node $\alpha_j$ is a common root of $\calB_a$ of first order, $\calB_a$ has no other common roots, and $D_j\neq0$ for $j=1,\dots,n$.
Then no $\tilde{a}\in\calU^g_0$ near $a$ has a common root of $\calB_{\tilde{a}}$.
In particular, $a$ does not lie in the closure of $\calS^g$.
\end{lemma}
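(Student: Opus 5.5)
We argue by contradiction with a limiting argument. Suppose there is a sequence $a_m\in\calU^g_0$ with $a_m\to a$ such that $\calB_{a_m}$ has a common root $\beta_m$. By Lemma~\ref{lem:b_omega smooth node}, $\calB$ is a smooth rank~$2$ bundle over a neighbourhood of $a$. We can therefore choose frames $(b_{1,m},b_{2,m})$ converging to a basis $(b_1,b_2)$ of $\calB_a$. Passing to a subsequence, $\beta_m\to\beta\in\CP^1$. Since $b_k(0)$ and the leading coefficient are bounded away from zero for a basis, $\beta\ne0,\infty$. By continuity, $\beta$ is a common root of $b_1,b_2$. Because $\calB_a$ has no common roots other than the nodes, $\beta=\alpha_j$ for some $j$. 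Thus the whole problem is local at a node: we must show that for $\tilde a$ near $a$, the two polynomials $b_{1},b_{2}\in\calB_{\tilde a}$ have no common root in a small ball $B(\alpha_j,\varepsilon)$.

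To study this, consider the resultant-type function $F(\tilde a,\lambda)=b_1(\lambda)b_2'(\lambda)-b_2(\lambda)b_1'(\lambda)$ together with $b_1,b_2$ near $(a,\alpha_j)$. We use a basis with $b_1'(\alpha_j)\ne0$, which exists as in the discussion after Lemma~\ref{lem:whitham kernel node}. By the implicit function theorem, $b_1$ has a unique root $\beta_1(\tilde a)$ near $\alpha_j$, depending smoothly on $\tilde a$. A common root near $\alpha_j$ is then exactly a zero of the smooth function $G(\tilde a)=b_2(\beta_1(\tilde a))$, equivalently of $f=b_2/b_1$ having a pole cancellation. At $\tilde a=a$ we have $G(a)=0$, so the set $\{G=0\}$ is generically a hypersurface through $a$. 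The claim is that this hypersurface lies inside $\calN_j$, so it contains no point of $\calU^g_0$. Note that $G$ is a single real function, by the reality conditions, once $\beta_1\in\bbS^1$. If $\beta_1\notin\bbS^1$, then $\rho$-symmetry gives a second root, but near $\alpha_j$ only one root exists, so $\beta_1$ stays on $\bbS^1$.

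The key step, and the expected main obstacle, is to compute $\dd G$ at $a$ and show it is a nonzero multiple of $\dot a\mapsto\dot a(\alpha_j)$. This amounts to proving that $\{G=0\}$ and $\calN_j$ are hypersurfaces with the same tangent space. Since $\calN_j\subset\{G=0\}$, because nodes are forced common roots by Lemma~\ref{lem:b_omega continuous}, equality of these smooth hypersurfaces near $a$ follows once $\dd G\ne0$.

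For the derivative we write
\[
\dot G = \dot b_2(\alpha_j)+b_2'(\alpha_j)\dot\beta_1, \qquad \dot\beta_1=-\dot b_1(\alpha_j)/b_1'(\alpha_j).
\]
We then need $\dot b_k(\alpha_j)$ for a general variation $\dot a$ in the bundle $\calB$ with fixed $b_k(0)$, not an isoperiodic one. We would handle this by the nodal period analysis of Lemma~\ref{lem:nodal B period derivative}. The $A$-periods vanish, so $\dot\Theta_{b_k}$ is of the second kind, and the residue computation at the two roots converging to $\alpha_j$ relates $\dot b_k(\alpha_j)$ to $\dot a(\alpha_j)$ and the $B_j$-period derivatives. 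Alternatively, and more simply, we first test on tangent vectors to $\calN_j$, where $\dot G=0$ automatically, and then need only one transverse direction with $\dot G\ne0$. For this we take the isoperiodic direction of Lemma~\ref{lem:whitham kernel node}(ii) with $Q(\alpha_j)\ne0$, which has $\dot a(\alpha_j)\ne0$ by~\eqref{eq:node dot a}. On such a vector, $\dot b_k(\alpha_j)$ is given by $\Delta_{k,j}=0$ in terms of $c_k$ and $\dot a'$. Antisymmetrizing as in the proof of Lemma~\ref{lem:whitham kernel node} should express $\dot G$ as a multiple of $\tilde b_2c_1-\tilde b_1c_2$ and its derivative at $\alpha_j$, which yields a nonzero multiple of $Q(\alpha_j)$ divided by $\tilde b_1(\alpha_j)$. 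I expect that $D_j\ne0$ is exactly what prevents this from vanishing, since $D_j=0$ is the case where a genuine common root could split off, in accordance with Corollary~\ref{cor:Willmore derivative node}. Once $\dd G\ne0$ is established, $\{G=0\}=\calN_j$ near $a$. Hence the $a_m\in\calU^g_0$ cannot have common roots near $\alpha_j$, which gives the contradiction. The statement about the closure of $\calS^g$ follows immediately, since $\calS^g\subset\calU^g_0$ consists of curves with a common root.
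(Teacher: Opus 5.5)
Your proposal is correct. Its skeleton matches the paper's proof: reduce to a neighbourhood of each node by continuity of $\calB$, take the real function ``$b_2$ at the root of $b_1$ near $\alpha_j$'', note that it vanishes on $\calN_j$, show that its differential at $a$ is nonzero, and conclude that its zero set is $\calN_j$ near $a$. You are also more careful than the paper about the common roots not escaping to $0$ or $\infty$.

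The real difference is the transverse direction used to get $\dd G\neq0$.

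\textbf{The paper's route.} It uses the explicit vector from the proof of Lemma~\ref{lem:whitham kernel node}(i), with $c_k=u\tilde b_k$, which only solves the Whitham equations. From \eqref{eq:whitham eval node} and the antisymmetrized \eqref{eq:whitham deriv eval node} alone it gets $\alpha_j^{(g+1)/2}\dot\varphi_j=\dot a(\alpha_j)D_j/(2a''(\alpha_j)b_1'(\alpha_j))$.

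\textbf{Your route.} You take an isoperiodic vector in $\ker\dd\Phi$ with $Q(\alpha_j)\neq0$, which exists by Lemma~\ref{lem:whitham kernel node}(ii), and use $\Delta_{k,j}=0$. The computation you left as ``should'' does go through. Substituting $\Delta_{k,j}=0$, the $\dot a'$ terms cancel, and at $\lambda=\alpha_j$
\[
\dot b_2b_1'-\dot b_1b_2'
= p'\bigl[2\ci\alpha_j(c_2'\tilde b_1-c_1'\tilde b_2)-\ci(c_2\tilde b_1-c_1\tilde b_2)\bigr]
= 2\ci\alpha_j{p'}^2\tilde aQ
= \ci\alpha_j a''Q .
\]
The second term vanishes by \eqref{eq:whitham Q node} at $\alpha_j$. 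The identity $c_2'\tilde b_1-c_1'\tilde b_2=p'\tilde aQ$ at $\alpha_j$ comes from differentiating \eqref{eq:whitham Q node} and using $\tilde b_2'c_1-\tilde b_1'c_2=2p'\tilde aQ$ at $\alpha_j$, which was established in the proof of Lemma~\ref{lem:whitham kernel node}(i). Hence $\dot G=\ci\alpha_ja''(\alpha_j)Q(\alpha_j)/b_1'(\alpha_j)$, which by \eqref{eq:node dot a} equals the paper's expression.

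\textbf{Two small points to add.}
\begin{itemize}
\item Your formula for $\dot G$ is written with the derivative of a fixed frame, for which $\dot b_k(0)=0$. You then insert the $\dot b_k$ of a kernel vector, which in general has $\dot b_k(0)\neq0$. This substitution needs the remark that the two differ by an element of $\calB_a$, and such elements vanish at $\alpha_j$.
\item $G$ should be normalized as $\beta_1^{-(g+1)/2}b_2(\beta_1)$ to be genuinely real-valued.
\end{itemize}

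\textbf{Comparison.} In your formula $D_j$ does not appear at all. Contrary to your expectation, $D_j\neq0$ is not what stops $\dot G$ from vanishing. It enters only through Lemma~\ref{lem:whitham kernel node}(ii), which guarantees a kernel vector with $Q(\alpha_j)\neq0$.

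The paper's route is lighter. It needs neither Lemma~\ref{lem:nodal B period derivative} nor any hypothesis at the other nodes, and it shows directly that $\dd\varphi_j\neq0$ if and only if $D_j\neq0$.

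Your route gives a different interpretation: along isoperiodic directions, the forced common root at $\alpha_j$ disappears to first order exactly when the node opens, since $\dot G\propto Q(\alpha_j)\propto\dot a(\alpha_j)$.
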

\begin{proof}
For each node $\alpha_j$, we express the condition that $\calB_{\tilde{a}}$ has a common root near $\alpha_j$ as the vanishing of a smooth function $\varphi_j$, and compute the derivative of $\varphi_j$ along a Whitham direction that opens the node.
Fix $j$ and a smooth frame $(b_1,b_2)$ of $\calB$ near $a$ with $b_1'(\alpha_j)\neq0$, which exists by Lemma~\ref{lem:b_omega smooth node}.
For $\tilde{a}\in\calU^g$ near $a$, the polynomial $b_1$ has a unique root $\beta$ near $\alpha_j$, which is a smooth function of $\tilde{a}$.
It lies on $\bbS^1$, since its reflection under $\lambda\mapsto\bar{\lambda}^{-1}$ is also a root of $b_1$ near $\alpha_j$.
Hence $\varphi_j=\beta^{-(g+1)/2}\,b_2(\beta)$, the evaluation of $b_2$ at the root of $b_1$, is a smooth real function near $a$ (for a continuous choice of the square root).
The condition to have a common root of $\calB_{\tilde{a}}$ near $\alpha_j$ is exactly $\varphi_j=0$.
In particular $\varphi_j$ vanishes on the hypersurface $\calN_j$, whose tangent space is $\{\dot{a}\mid\dot{a}(\alpha_j)=0\}$ by the discussion before Theorem~\ref{thm:Whitham deformation node}.

We compute therefore $\dot{\varphi}_j$ in a direction transverse to $\calN_j$.
Let $(\dot{a},\dot{b}_1,\dot{b}_2)$ be the vector constructed in the proof of Lemma~\ref{lem:whitham kernel node}(i), which solves the Whitham equations~\eqref{eq:whitham ab} with $c_k=u\tilde{b}_k$ and has $\dot{a}(\alpha_j)\neq0$.
As $\dot{\Theta}_{b_k}$ is exact, the $A$-periods are preserved, so $\dot{b}_k$ differs from the derivative of the frame along $\dot{a}$ by an element of $\calB_a$.
Such an element vanishes at $\alpha_j$, so $\dot{\beta}=-\dot{b}_1(\alpha_j)/b_1'(\alpha_j)$.
Recall from the proof of Lemma~\ref{lem:whitham kernel node} that antisymmetrizing~\eqref{eq:whitham deriv eval node} gives $\tilde{b}_2\dot{b}_1-\tilde{b}_1\dot{b}_2=\ci\alpha_j(c_2\tilde{b}_1'-c_1\tilde{b}_2')$ at $\alpha_j$, using only the Whitham equations.
Therefore, since $b_2(\alpha_j)=0$ and evaluating everything at $\alpha_j$,
\begin{align*}
\alpha_j^{(g+1)/2}\dot{\varphi}_j
= \dot{b}_2+b_2'\dot{\beta}
= \frac{\dot{b}_2b_1'-\dot{b}_1b_2'}{b_1'}
= -\frac{\ci\alpha_jp'(c_2\tilde{b}_1'-c_1\tilde{b}_2')}{b_1'}
= \frac{\dot{a}\,D_j}{2a''\,b_1'}.
\end{align*}
This calculation also holds at every point of $\calN_j$ near $a$, since the hypotheses are open conditions.

As $D_j\neq0$ near $a$ for $j=1,\dots,n$, this formula shows that $\dd\varphi_j$ has rank $1$ at every point of $\calN_j$ near $a$.
By the constant rank theorem, near $a$ the zero set of $\varphi_j$ is a submanifold of codimension $1$.
It contains the hypersurface $\calN_j$, and so the two coincide near $a$.
Thus for $\tilde{a}\in\calU^g_0$ near $a$, the plane $\calB_{\tilde{a}}$ has no common root near any node.
Since $\calB$ is continuous and $\calB_a$ has no common roots other than the nodes, it has no common root elsewhere either.
As $\calS^g\subset\calU^g_0$, in particular $a$ does not lie in the closure of $\calS^g$.
\end{proof}

\section{Isoperiodic Invariants at Nodal Curves}
\label{sec:invariants nodes}

We recall the discrete invariant of~\cite[Section~3]{Hasse2026a}.
A change of canonical basis acts on $\BPer_a[\calB_a]\subset\bbR^g$ through a matrix $X\in\GL(g,\bbZ)$~\eqrefinv{eq:symplectic transformation}{3.1}.
By Lemma~\refinv{lem:CA adapted basis}{3.2}, two adapted canonical bases differ by $X\in\CA(g)=\pi_2^{-1}[\Perm(g,\bbF_2)]$, where $\pi_2$ is reduction modulo $2$.
In the doubly periodic case, $\Pi=\BPer_a[\calB_a]\in\Gr(2,\bbQ^g)$, so that $\Lambda=\Pi\cap\bbZ^g$ is a lattice of rank $2$.
Theorem~\refinv{thm:CAg orbits of Pi}{3.4} states that the $\CA(g)$-orbits of such $\Pi$ correspond to the triples $(n_1,n_2,n_3)$ that tally the following:
for a basis $v_1,v_2$ of $\Lambda$,
how often each nonzero value of $\bbF_2^2$ occurs among the binary coordinate vectors $(\pi_2(v_{1,i}),\pi_2(v_{2,i}))\in\bbF_2^2$ for $i=1,\dots,g$.
The tally is independent of the basis $v_1,v_2$, although a change of basis may permute the nonzero values of $\bbF_2^2$.
Because the binary coordinate vectors span $\bbF_2^2$, at least two counts are nonzero.
Listing the counts from largest to smallest, we have the conditions $n_1\geq n_2\geq n_3\geq0$, $n_2\geq1$ and $n_1+n_2+n_3\leq g$.
By definition, $\BPer_a[\calB_a]$, and hence the triple, is constant along isoperiodic sets; it is an invariant of $\calH^g_0(a)$.
The definition uses only $\BPer_a$ on $\calB_a$ and adapted canonical bases.
By Lemma~\ref{lem:b_omega smooth node} and the bases of homology of Section~\ref{sec:extending singular}, it therefore extends to every $a\in\calU^g$ in the doubly periodic case, for $g\geq2$, and the triple is constant on $\calU^g(a)$.

Our goal for this section is to show that every triple that meets the three inequalities of the previous paragraph actually occurs.
Lemma~\ref{lem:invariants normalization} relates the lattice of a nodal curve to that of its normalization.
Lemma~\ref{lem:insert node} builds on that, showing how inserting one node into a doubly periodic curve changes the triple.
Theorem~\ref{thm:all triples} combines this with Theorem~\ref{thm:Whitham deformation node} to prove it.
We close with a short discussion of $\tau$, the namesake invariant of $\calU^g(a,\tau)$.

\begin{lemma}
\label{lem:invariants normalization}
Let $a\in\calU^g_n$ with $g-n\geq2$, and let $\hat{a}\in\calU^{g-n}_0$ be its normalization~\eqref{eq:normalization}.
Choose a basis of homology as in Section~\ref{sec:extending singular}, so that the first $n$ $A$-cycles are the neck cycles, and let $\mathrm{pr}\colon\bbR^g\to\bbR^{g-n}$ forget the first $n$ coordinates.
\begin{enumerate}
\item
The projection $\mathrm{pr}$ restricts to a complex linear isomorphism from $\Pi=\BPer_a[\calB_a]$ onto $\hat{\Pi}=\BPer_{\hat{a}}[\calB_{\hat{a}}]$.
\item
If $a$ is in the doubly periodic case, then so is $\hat{a}$, and $\mathrm{pr}[\Lambda]$ is a sublattice of finite index in $\hat{\Lambda}=\hat{\Pi}\cap\bbZ^{g-n}$.
\item
If $\hat{a}$ is in the doubly periodic case, then $a$ is in the doubly periodic case if and only if the nodal $B$-periods of the elements of $\calB_a$ corresponding to a basis of $\hat{\Lambda}$ are rational.
\end{enumerate}
\end{lemma}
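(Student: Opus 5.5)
The plan is to compare the periods of $\Theta_b$ on $\Sigma_a$ with those of $\Theta_{\hat{b}}$ on $\Sigma_{\hat{a}}$, using the natural isomorphism $\calB_a\to\calB_{\hat{a}}$, $b\mapsto\hat{b}$, constructed before Remark~\ref{rem:normalized differentials}. By construction $\Theta_{\hat{b}}$ is the pullback of $\Theta_b$ under normalization followed by rotation. The homology basis of Section~\ref{sec:extending singular} has $A_{n+1},\dots,A_g,B_{n+1},\dots,B_g$ avoiding the nodes, and these pull back to a canonical basis of $\Sigma_{\hat{a}}$, rotated by $\lambda\mapsto e^{-\ci\phi}\lambda$. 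I would first check that this basis is adapted: the rotation commutes with $\lambda\mapsto\bar{\lambda}^{-1}$, so $A$-cycles go to $A$-cycles. Then, for $j>n$, the $B_j$-period of $\Theta_b$ equals the corresponding $B$-period of $\Theta_{\hat{b}}$. It follows that $\mathrm{pr}\circ\BPer_a=\BPer_{\hat{a}}\circ(b\mapsto\hat{b})$ on $\calB_a$.

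For (i), both $\BPer_a|_{\calB_a}$ and $\BPer_{\hat{a}}|_{\calB_{\hat{a}}}$ are injective by Lemma~\ref{lem:b_omega smooth node}, since $g\geq2$ and $g-n\geq2$. Also $b\mapsto\hat{b}$ is an isomorphism. Hence $\mathrm{pr}|_\Pi$ is an isomorphism $\Pi\to\hat{\Pi}$. It is complex linear because the complex structures on $\Pi$ and $\hat{\Pi}$ are transported from $J$ on $\calB_a$ and $\calB_{\hat{a}}$, and $b\mapsto\hat{b}$ is complex linear, acting by $\omega\mapsto e^{\ci\phi/2}\omega$.

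For (ii), I use that the doubly periodic case means $\Pi\in\Gr(2,\bbQ^g)$, equivalently that $\Lambda=\Pi\cap\bbZ^g$ has rank $2$. Then $\mathrm{pr}[\Lambda]\subset\hat{\Pi}\cap\bbZ^{g-n}$, and by (i) it still has rank $2$. So $\hat{\Pi}$ is spanned by rational vectors, and $\hat{\Lambda}$ is a lattice of rank $2$ in the plane $\hat{\Pi}$. Any two rank-$2$ lattices in a plane with one contained in the other have finite index.

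For (iii), take a basis $\hat{v}_1,\hat{v}_2$ of $\hat{\Lambda}$ and let $b_1,b_2\in\calB_a$ be the preimages under $\mathrm{pr}\circ\BPer_a$. Then $\BPer_a(b_k)=(x_k,\hat{v}_k)$, where $x_k\in\bbR^n$ are the nodal $B$-periods. Suppose $x_1,x_2$ are rational. Then $\Pi$ is spanned by rational vectors, so $\Pi\in\Gr(2,\bbQ^g)$ and $a$ is doubly periodic. Conversely, suppose $a$ is doubly periodic. By (ii), $\mathrm{pr}[\Lambda]$ has finite index $N$ in $\hat{\Lambda}$. Hence $N\hat{v}_k=\mathrm{pr}(w_k)$ for some $w_k\in\Lambda\subset\bbZ^g$, and injectivity of $\mathrm{pr}|_\Pi$ gives $w_k=N\BPer_a(b_k)$. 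So $x_k\in\frac{1}{N}\bbZ^n$ is rational.

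The main obstacle is the bookkeeping in the first step rather than any single hard estimate. The sign conventions ($\pm\hat{\nu}$), the rotation, and the orientation of cycles all have to match, so that the non-nodal $B$-periods agree exactly and not merely up to sign or a unimodular factor. If a consistent global sign $\pm$ arises, it is harmless: it only changes the isomorphisms by $-1$, and rationality and lattice statements are unaffected.
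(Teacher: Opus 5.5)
Your proposal is correct and follows essentially the same route as the paper: the identity $\mathrm{pr}\circ\BPer_a=\BPer_{\hat a}\circ(b\mapsto\hat b)$ on $\calB_a$, coming from the pulled-back non-nodal cycles, gives (i) via the injectivity from Lemma~\ref{lem:b_omega smooth node}, and (ii) and (iii) then follow by the same rationality arguments as in the paper. The only cosmetic difference is in the converse of (iii). There you use the finite index $N$ from (ii) to get the explicit statement that the nodal $B$-periods lie in $\frac{1}{N}\bbZ^n$, whereas the paper simply notes that the inverse of $\mathrm{pr}|_\Pi$ is defined over $\bbQ$.
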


\begin{proof}
The last $g-n$ cycles of the basis avoid the nodes, and they pull back to an adapted canonical basis of homology of $\Sigma_{\hat{a}}$.
Recall the isomorphism $\calB_a\to\calB_{\hat{a}}$, $b\mapsto\hat{b}$, described after~\eqref{eq:normalization}: it is complex linear, and $\Theta_{\hat{b}}$ is the pullback of $\Theta_b$ to the normalization, up to rotation.
Hence $\BPer_{\hat{a}}(\hat{b})=\mathrm{pr}(\BPer_a(b))$.
As $g-n\geq2$, the map $\BPer_{\hat{a}}$ is injective on $\calB_{\hat{a}}$ by Lemma~\ref{lem:b_omega smooth node}, and so $\mathrm{pr}$ restricts to an isomorphism $\Pi\to\hat{\Pi}$.
It is complex linear because $b\mapsto\hat{b}$ is, which proves (i).

Recall that a plane $\Pi\subset\bbR^g$ meets $\bbZ^g$ in a lattice of rank $2$ if and only if it is rational, that is, $\Pi\in\Gr(2,\bbQ^g)$; so $a$ is in the doubly periodic case if and only if $\Pi$ is rational.
If $\Pi$ is rational then so is $\hat{\Pi}=\mathrm{pr}[\Pi]$, and $\mathrm{pr}[\Lambda]\subset\hat{\Lambda}$ is a subgroup of rank $2$ of a lattice of rank $2$, which proves (ii).

For (iii), let $\hat{w}_1,\hat{w}_2$ be a basis of $\hat{\Lambda}$, and let $w_k\in\Pi$ be their preimages under $\mathrm{pr}$.
The first $n$ coordinates of $w_k$ are the nodal $B$-periods $-\ci\int_{\alpha_j^-}^{\alpha_j^+}\Theta_{b_k}$ of the corresponding $b_k\in\calB_a$, and the remaining coordinates are integers.
If the nodal $B$-periods are rational, then $\Pi$ is spanned by the rational vectors $w_1,w_2$.
Conversely, if $\Pi$ is rational then the inverse of $\mathrm{pr}|_\Pi$ is defined over $\bbQ$, and so $w_1,w_2$ are rational.
\end{proof}

As a matter of cosmic history, it has always been easier to destroy than to create~\cite{Meyer82}; one must insert nodes carefully to maintain double periodicity.
By Lemma~\ref{lem:invariants normalization}(iii), we need to arrange that the nodal $B$-periods are rational.
With one node they form a vector in $\bbR^2$, and the following lemma shows that they move freely as the node slides along $\bbS^1$ and the normalization moves along its $1$-dimensional isoperiodic set.

\begin{lemma}
\label{lem:insert node}
Let $\hat{g}\geq2$, and let $\hat{a}\in\calH^{\hat{g}}_0$ be in the doubly periodic case, with triple $(\hat{n}_1,\hat{n}_2,\hat{n}_3)$, such that $\calB_{\hat{a}}$ has no common roots.
Let $(n_1,n_2,n_3)$ be either $(\hat{n}_1,\hat{n}_2,\hat{n}_3)$ or obtained from it by increasing one entry by one and reordering.
Then there is $a\in\calH^{\hat{g}+1}_1$ in the doubly periodic case with triple $(n_1,n_2,n_3)$, whose node is a common root of $\calB_a$ of first order with $D_1\neq0$, and such that $\calB_a$ has no other common roots.
\end{lemma}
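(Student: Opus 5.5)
We construct $a$ from $\hat a$ by multiplying in a node: for $\alpha\in\bbS^1$ not a root of $\hat a$ and $\hat a$ rotated suitably, set $a=p_\alpha^2\tilde a$ with $\tilde a$ the inverse rotation of~\eqref{eq:normalization}, chosen so that $a(0)=1$. Then $a\in\calH^{\hat g+1}_1$, since $\lambda^{-1}p_\alpha^2=|p_\alpha|^2\geq0$ on $\bbS^1$ and the double root at $\alpha$ is a local minimum. By the isomorphism $\calB_a\cong\calB_{\hat a}$ after~\eqref{eq:normalization}, every $b\in\calB_a$ is $p_\alpha\tilde b$. Since $\calB_{\hat a}$ has no common roots, $\alpha$ is a first-order common root of $\calB_a$ and there are no others, provided $\tilde b_1,\tilde b_2$ have no common root at $\alpha$; this holds automatically. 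The nondegeneracy $D_1\neq0$ means $f'(\alpha)\neq0$, where $f=\hat b_2/\hat b_1$ is the winding number function of the normalization transported by rotation. Since $f$ is nonconstant and real-analytic on $\bbS^1$ (after removing the finitely many zeros of $\hat b_1$), this excludes only finitely many $\alpha$. So the node can be inserted at almost every $\alpha$, and the problem reduces to arranging double periodicity and the desired triple.

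By Lemma~\ref{lem:invariants normalization}(iii), $a$ is doubly periodic if and only if the two nodal $B$-periods
\[
N(\alpha,\hat a)=\Bigl(-\ci\int_{\alpha^-}^{\alpha^+}\Theta_{\hat b_1},\,-\ci\int_{\alpha^-}^{\alpha^+}\Theta_{\hat b_2}\Bigr)\in\bbR^2
\]
are rational, where $\hat b_1,\hat b_2$ correspond to a basis $\hat w_1,\hat w_2$ of $\hat\Lambda$. Write $N=2(\hat q_1,\hat q_2)(\alpha^+)$ with $\hat q_k$ the $\sigma$-odd primitives (multivalued, with periods in the lattice). We vary $(\alpha,\hat a)$ over the two-dimensional family $\bbS^1\times\calH^{\hat g}_0(\hat a,\tau)$, which by Theorem~\refinv{thm:Whitham deformation}{2.3} is a surface of doubly periodic curves with fixed $\hat\Lambda$ and a constant-period frame. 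The key step, and the main obstacle, is to show that $N$ is a local diffeomorphism at some point. The $\alpha$-derivative is $2(\Theta_{\hat b_1},\Theta_{\hat b_2})/\dd\lambda$, evaluated at $\alpha^+$. The derivative along the Whitham flow is $2(\ci\hat c_1/\hat\nu,\ci\hat c_2/\hat\nu)(\alpha^+)$, using $\dot\Theta=\dd(\ci \hat c/\hat\nu)$. The determinant is proportional to $\hat b_1\hat c_2-\hat b_2\hat c_1=-\hat a Q$ with $Q\in\bbR\lambda$ nonzero on the tangent of $\calH^{\hat g}_0(\hat a,\tau)$. This is nonzero at $\alpha$ because $\hat a(\alpha)\neq0$ and $Q(\alpha)\neq0$. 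Hence $N$ is open, and we can choose $(\alpha,\hat a')$ with $N$ in any prescribed open set of rational values, keeping $D_1\neq0$ and the absence of common roots (both open conditions). We also keep the normalization in $\calH^{\hat g}_0$ with triple $(\hat n_1,\hat n_2,\hat n_3)$, since the triple is constant on isoperiodic sets.

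To control the triple, note that in the nodal basis of Section~\ref{sec:extending singular} the lattice $\Lambda$ contains the vectors $(N_k,\hat w_k)$ when $N\in\bbZ^2$, and equals their span then. The first coordinate contributes one extra binary coordinate vector $(\pi_2 N_1,\pi_2 N_2)\in\bbF_2^2$. If this vector is $0$, the triple is unchanged. If it is one of the three nonzero values, the corresponding count increases by one. Since $N$ ranges over an open set of $\bbR^2$, we may choose $N\in\bbZ^2$ in any residue class mod $2$. (If we land only near nonintegral rationals, we instead rescale the lattice by a different choice, but integral points suffice because open sets in $\bbR^2$ contain integer points only if large; so we first use the scaling of $\calB$ along the family, or alternatively choose $N\in\frac1m\bbZ^2$ and pass to $\Lambda=\Pi\cap\bbZ^{g}$ and compute its reduction.) The cleanest route is to find a point where $N$ is integral in each required residue class. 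If openness near one point is insufficient, we use that $\alpha$ winds around $\bbS^1$ and shifts $\hat q_k(\alpha^+)$ by $\frac12$ of the $A$/$B$-periods, giving all residue classes by monodromy. This parity bookkeeping, together with the local diffeomorphism computation, is the part requiring the most care.
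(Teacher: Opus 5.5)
The setup and the key computation match the paper's proof. That is the two-parameter family $(\alpha,t)\in\bbS^1\times\calH^{\hat g}_0(\hat a,\hat\tau)$, the derivatives $2\hat b_k(\alpha)/\hat\nu(\alpha^+)$ and $2\hat c_k(\alpha)/\hat\nu(\alpha^+)$, and the Jacobian proportional to $\hat a(\alpha)\hat Q(\alpha)\neq0$. The genuine gap is in the triple.

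You reduce to finding a point where $N\in\bbZ^2$ lies in a prescribed class mod $2$. The local diffeomorphism only gives you an open set of values of $N$ near the curve $\alpha\mapsto N(\alpha,0)$, and that set in general contains no integer point, let alone one in every parity class. None of your repairs closes this:
\begin{itemize}
\item Rescaling the frame is not allowed. The periods of $(\hat b_1,\hat b_2)$ must remain a basis $\hat w_1,\hat w_2$ of $\hat\Lambda$ for $\Lambda=\mathrm{span}\{(N_k,\hat w_k)\}$ to hold.
\item The monodromy in $\alpha$ cannot change the parity of $N$. Moving $\alpha$ once around $\bbS^1$ changes $N_k$ by $-\ci\bigl(\int_{\gamma^+}-\int_{\sigma_\ast\gamma^+}\bigr)\Theta_{\hat b_k}=-2\ci\int_{\gamma^+}\Theta_{\hat b_k}$, where $\gamma^+$ is a lift of $\bbS^1$. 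Since $A$-periods vanish on $\calB$ and $\gamma^+$ is an integral cycle, this is twice an integer combination of the integral entries of $\hat w_k$. So $N$ shifts by an even integer vector.
\item Passing to $N\in\frac1m\bbZ^2$ with unspecified $m$ does not control the triple. For even $m$ the sublattice $\mathrm{pr}[\Lambda]$ can have even index in $\hat\Lambda$, and then the last $\hat g$ binary coordinate vectors are no longer those of $\hat\Lambda$. For example, $N=(\tfrac12,0)$ gives $\mathrm{pr}[\Lambda]=\mathrm{span}\{2\hat w_1,\hat w_2\}$, which removes every contribution of $\hat w_1$ to the tally.
\end{itemize}

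The missing idea is to use rational values with \emph{odd} denominator.
\begin{itemize}
\item For each $\varepsilon\in\bbF_2^2$, the points $(r_1/s,r_2/s)$ with $s$ odd and $(r_1,r_2)\equiv\varepsilon\bmod 2$ are dense in $\bbR^2$. So the open image contains one, and by Lemma~\ref{lem:invariants normalization}(iii) the resulting curve is doubly periodic.
\item Then $\mathrm{pr}[\Lambda]=\{m_1\hat w_1+m_2\hat w_2\mid m_1r_1+m_2r_2\equiv0\bmod s\}$ contains $s\hat\Lambda$. Hence its index in $\hat\Lambda$ is odd, and reduction mod $2$ maps it onto $\pi_2[\hat\Lambda]$.
\item Since $\GL(2,\bbZ)\to\GL(2,\bbF_2)$ is surjective, choose a basis of $\mathrm{pr}[\Lambda]$ whose coordinates in $\hat w_1,\hat w_2$ reduce to the standard basis mod $2$. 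Its lifts to $\Lambda$ have the same last $\hat g$ binary coordinate vectors as $\hat w_1,\hat w_2$.
\item The first binary coordinate vector of these lifts is $\pi_2(r_1,r_2)=\varepsilon$, because $s$ is odd.
\end{itemize}
This gives exactly the tally of $\hat\Lambda$ with one more occurrence of $\varepsilon$, which is what the lemma requires.

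A minor point: your finiteness argument for $D_1\neq0$ via $f'$ needs $\hat b_1(\alpha)\neq0$. It is cleaner to note that $D_1\neq0$ excludes only the roots on $\bbS^1$ of the nonzero polynomial $\hat b_1\hat b_2'-\hat b_2\hat b_1'$.
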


\begin{proof}
We construct a two-parameter family of curves with one node, show that the nodal $B$-periods are local coordinates on the parameter space, and choose the parameters so that the periods are rational with odd denominator.
Let $\hat{a}_t$, $t\in(-1,1)$, parametrize $\calH^{\hat{g}}_0(\hat{a},\hat{\tau})$ near $\hat{a}_0=\hat{a}$, and let $(\hat{b}_1,\hat{b}_2)$ be a frame of $\calB$ along it with constant periods, such that $\BPer_{\hat{a}}(\hat{b}_1),\BPer_{\hat{a}}(\hat{b}_2)$ is a basis $\hat{w}_1,\hat{w}_2$ of $\hat{\Lambda}$.
For $\alpha\in\bbS^1$, let $a_{\alpha,t}\in\calH^{\hat{g}+1}_1$ have a node at $\alpha$ and normalization $\hat{a}_t$; up to a rotation, which changes none of the quantities below, $a_{\alpha,t}=p_\alpha^2\hat{a}_t$.
Consider the nodal $B$-periods $N_k(\alpha,t)=-\ci\int_{\alpha^-}^{\alpha^+}\Theta_{\hat{b}_k}$, $k=1,2$, where $\alpha^\pm$ are the two points over $\alpha$.
Writing $\alpha=e^{\ci\phi}$ and using $\sigma^\ast\Theta_{\hat{b}_k}=-\Theta_{\hat{b}_k}$ at the two endpoints, we have $\partial_\phi N_k=-2\ci\cdot\ci\alpha\,\hat{b}_k(\alpha)/(\alpha\hat{\nu}(\alpha^+))=2\hat{b}_k(\alpha)/\hat{\nu}(\alpha^+)$.
Along $\hat{a}_t$ we have $\dot{\Theta}_{\hat{b}_k}=\dd(\ci\hat{c}_k/\hat{\nu})$ with $\hat{b}_2\hat{c}_1-\hat{b}_1\hat{c}_2=\hat{a}\hat{Q}$ and $\hat{Q}\in\bbR\lambda\setminus\{0\}$, by the description of the tangent line of $\calH^{\hat{g}}_0(\hat{a},\hat{\tau})$ in Section~\ref{sec:background}.
Hence $\partial_tN_k=2\hat{c}_k(\alpha)/\hat{\nu}(\alpha^+)$.
The Jacobian determinant of $(N_1,N_2)$ in $(\phi,t)$ is therefore
\[
\frac{4\big(\hat{b}_1\hat{c}_2-\hat{b}_2\hat{c}_1\big)(\alpha)}{\hat{\nu}(\alpha^+)^2}
= -\frac{4\,\hat{a}(\alpha)\hat{Q}(\alpha)}{\hat{\nu}(\alpha^+)^2}
\neq0,
\]
since $\hat{a}$ has no roots on $\bbS^1$ and a nonzero $\hat{Q}\in\bbR\lambda$ does not vanish on $\bbS^1$.
So $(N_1,N_2)$ is a local diffeomorphism near every $(\alpha,0)$.

Fix $\alpha_0\in\bbS^1$ at which $D_1\neq0$; this excludes only the finitely many roots on $\bbS^1$ of $\hat{b}_1\hat{b}_2'-\hat{b}_2\hat{b}_1'$, compare the commentary after Lemma~\ref{lem:whitham kernel node}.
The image of a small neighborhood of $(\alpha_0,0)$ is open, so it contains a point $(r_1/s,r_2/s)$ with $r_1,r_2\in\bbZ$, $s$ odd, and $(r_1,r_2)$ of any prescribed parity $\varepsilon\in\bbF_2^2$.
Let $a$ be the corresponding curve $a_{\alpha,t}$.
For $(\alpha,t)$ near $(\alpha_0,0)$ we still have $D_1\neq0$, and $\calB_{\hat{a}_t}$ still has no common roots.
Hence the node is a common root of $\calB_a$ of first order, and $\calB_a$ has no other common roots.
By Lemma~\ref{lem:invariants normalization}(iii), $a$ is in the doubly periodic case.

It remains to compute the triple.
An element $m_1\hat{w}_1+m_2\hat{w}_2$ of $\hat{\Lambda}$ lifts to an element of $\Pi$ whose nodal $B$-period is $(m_1r_1+m_2r_2)/s$, so $\mathrm{pr}[\Lambda]=\{m_1\hat{w}_1+m_2\hat{w}_2\mid m_1r_1+m_2r_2\equiv0\bmod s\}$.
This contains $s\hat{\Lambda}$, so its index in $\hat{\Lambda}$ is odd, and reduction modulo $2$ maps $\mathrm{pr}[\Lambda]$ onto $\pi_2[\hat{\Lambda}]$.
Since $\GL(2,\bbZ)\to\GL(2,\bbF_2)$ is surjective, we may choose a basis $m^1,m^2$ of $\mathrm{pr}[\Lambda]$, in the coordinates of $\hat{w}_1,\hat{w}_2$, that reduces to the standard basis modulo $2$.
Let $v_1,v_2\in\Lambda$ be its lifts.
Their last $\hat{g}$ binary coordinate vectors agree with those of $\hat{w}_1,\hat{w}_2$, and the first is $\pi_2\big((m^k_1r_1+m^k_2r_2)/s\big)_{k=1,2}=\pi_2(r_1,r_2)=\varepsilon$, as $s$ is odd.
So the tally of $\Lambda$ is that of $\hat{\Lambda}$ with one more occurrence of $\varepsilon$.
Choosing $\varepsilon=0$, or the value whose tally is to be increased, gives the triple $(n_1,n_2,n_3)$.
\end{proof}

\begin{theorem}
\label{thm:all triples}
Let $g\geq2$.
Every triple $(n_1,n_2,n_3)$ of integers with $n_1\geq n_2\geq n_3\geq0$, $n_2\geq1$ and $n_1+n_2+n_3\leq g$ is the triple of some $a\in\calH^g_0$ in the doubly periodic case.
\end{theorem}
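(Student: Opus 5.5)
We argue by induction on the genus, inserting one node at a time with Lemma~\ref{lem:insert node} and then opening it with Theorem~\ref{thm:Whitham deformation node}. Since the triple is constant on $\calU^g(a)$, opening a node does not change it.

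\emph{Base case $g=2$.} The admissible triples are $(1,1,0)$, $(2,0,0)$ is excluded by $n_2\geq1$, and then $(1,1,0)$ and $(1,1,1)$ are not both allowed since $n_1+n_2+n_3\leq2$. So the only triple is $(1,1,0)$. Every doubly periodic $a\in\calH^2_0$ has a triple satisfying the inequalities, so it suffices to exhibit one such curve. Spectral curves of genus $2$ CMC tori (or of doubly periodic sinh-Gordon solutions), known from~\cite{Ercolani1993,Jaggy1994}, will do. We also want $\calB_{\hat a}$ to have no common roots, so we perturb slightly along the isoperiodic set if necessary, using that $\calS^2$ is a proper subset. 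This last point needs a short argument from~\cite{Hasse2026a}.

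\emph{Inductive step.} Suppose every admissible triple for genus $\hat g\geq2$ is realized by some $\hat a\in\calH^{\hat g}_0$ in the doubly periodic case with $\calB_{\hat a}$ free of common roots. Take an admissible triple $(n_1,n_2,n_3)$ for $g=\hat g+1$. If $n_1+n_2+n_3\leq\hat g$, it is realized at genus $\hat g$ and we insert a node with $\varepsilon=0$. Otherwise decrease one entry by one and reorder so that the result is still admissible, then insert a node increasing that entry. Decreasing $n_3$ if $n_3\geq1$ works, and if $n_3=0$ we decrease $n_1$ (if $n_1\geq2$). The case $(1,1,0)$ has sum $2\leq\hat g$, so it falls under the first case. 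Lemma~\ref{lem:insert node} then gives $a\in\calH^g_1$ in the doubly periodic case with the desired triple. Its node is a first-order common root of $\calB_a$ with $D_1\neq0$, and there are no other common roots.

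\emph{Opening the node.} Now apply Theorem~\ref{thm:Whitham deformation node} with $n=1$; the condition on $D_j$ for $j<n$ is vacuous. Since $D_1\neq0$, the discussion after that theorem shows that along $\calU^g(a,\tau)$ the node opens with $\alpha_1^{-g}\dot a(\alpha_1)=\sigma_1Q'(0)$, where $Q'(0)\neq0$. Choosing the direction of the curve so that this sign is positive splits the double root into a conjugate pair off $\bbS^1$. The other roots stay simple and off $\bbS^1$ nearby, so nearby points lie in $\calH^g_0$. The triple is constant on $\calU^g(a)$, and so is double periodicity, since the plane $\Pi$ is fixed. Finally, Lemma~\ref{lem:closure S} ensures that nearby $\tilde a\in\calU^g_0$ have no common roots of $\calB_{\tilde a}$, which keeps the induction hypothesis intact.

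The main obstacle is maintaining the auxiliary hypothesis that there are no common roots throughout the induction. Lemma~\ref{lem:closure S} handles this in the inductive step, so the real work is at the base case, where we need a genus $2$ doubly periodic curve outside $\calS^2$.
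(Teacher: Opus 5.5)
Your inductive step is the paper's argument, with two harmless variations. First, you reduce the triple by lowering $n_3$ when $n_3\geq1$, while the paper always lowers $n_1$; both give a permissible triple because the sum is at least $3$. Second, you open the node along the curve $\calU^g(a,\tau)$ using $\alpha_1^{-g}\dot a(\alpha_1)=\sigma_1Q'(0)$ with $Q'(0)\neq0$, while the paper only uses that the three-dimensional $\calU^g(a)$ is transverse to $\calN_1$. Both routes work, and Lemma~\ref{lem:closure S} maintains the no-common-roots hypothesis exactly as you say.

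The gap is in the base case, and it is exactly the point you leave open. What the induction needs at $g=2$ is a doubly periodic $a\in\calH^2_0$ outside $\calS^2$. Knowing that $\calS^2$ is a proper subset does not give this, since a proper closed subset can still contain a neighbourhood of the curve you start from. Your starting curves are in fact always in $\calS^2$. The spectral curve of a CMC torus satisfies the Sym point condition, which is a common root of $\calB_a$ on $\bbS^1$. So the perturbation is not optional ``if necessary''; it is the whole content of the base case.

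The paper closes this in two steps:
\begin{enumerate}
\item For $g=2$ the image $\BPer_a[\calB_a]$ is all of $\bbR^2$, by the injectivity in Lemma~\ref{lem:b_omega smooth node}. So every $a\in\calH^2_0$ is in the doubly periodic case with triple $(1,1,0)$, and the appeal to CMC tori in \cite{Ercolani1993,Jaggy1994} is unnecessary.
\item For $g=2$ the plane $\calB_a$ has at most one common root, which must then lie on $\bbS^1$. Hence the curves with a common root form the hypersurface $\calS^2$ of $\calH^2_0$ \cite[Lemma~1.1]{Carberry2023arxiv}. This has empty interior, so one may simply choose $a$ off it.
\end{enumerate}
With these two facts in place of the missing ``short argument'', the rest of your proof goes through.
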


\begin{proof}
We induct on $g$, producing examples for which $\calB_a$ has no common roots.
For $g=2$, every $a\in\calH^2_0$ is in the doubly periodic case, since $\Pi=\bbR^2$, and its triple is $(1,1,0)$, which is the only permissible triple.
For $a\in\calH^2_0$, the plane $\calB_a$ has at most one common root, which therefore lies on $\bbS^1$, so the curves for which $\calB_a$ has a common root form the hypersurface $\calS^2$ of $\calH^2_0$~\cite[Lemma~1.1]{Carberry2023arxiv}.
We may choose $a$ outside it.

Suppose the claim holds for $g$, and let $(n_1,n_2,n_3)$ be a permissible triple for $g+1$.
If $n_1+n_2+n_3\leq g$, it is permissible for $g$.
Otherwise $n_1+n_2+n_3=g+1\geq3$, and decreasing $n_1$ by one and reordering gives a permissible triple for $g$, from which $(n_1,n_2,n_3)$ is obtained by increasing one entry by one.
In either case, by the induction hypothesis and Lemma~\ref{lem:insert node}, there is $a\in\calH^{g+1}_1$ in the doubly periodic case with triple $(n_1,n_2,n_3)$, satisfying the hypotheses of Theorem~\ref{thm:Whitham deformation node} with $n=1$ and $D_1\neq0$.
By that theorem, $\calU^{g+1}(a)$ is a submanifold transverse to $\calN_1$, so it contains points $\tilde{a}$ arbitrarily close to $a$ on the side of $\calN_1$ where the double root splits into a conjugate pair, and these lie in $\calH^{g+1}_0$.
As $\tilde{a}\in\calU^{g+1}(a)$, it has the same triple as $a$, namely $(n_1,n_2,n_3)$ as desired.
By Lemma~\ref{lem:closure S}, $\calB_{\tilde{a}}$ has no common roots for $\tilde{a}$ close enough to $a$, so $\tilde{a}$ can serve in the next step of the induction.
\end{proof}

Recall the continuous invariant $\tau=\varphi(w_2)/\varphi(w_1)$ of~\eqrefinv{eq:def tau}{3.3}, for a basis $w_1,w_2$ of $\Lambda$ and a complex linear isomorphism $\varphi\colon(\Pi,J)\to\bbC$.
By Theorem~\refinv{thm:CAg orbits of τ}{3.6}, up to the stabilizer of $\Pi$ in $\CA(g)$, it takes values in a fundamental domain $\calF_1$, $\calF_3$ or $\calF_6$, according to whether three, two or none of the $n_i$ coincide, and it is constant on $\calH^g_0(a,\tau)$.
Like the triple, it is defined purely in terms of the data $\BPer_a[\calB_a]$ and $J$, and so it extends to $\calU^g$ in the doubly periodic case.
By Lemma~\ref{lem:invariants normalization}(i), $\mathrm{pr}$ is a complex linear isomorphism $\Pi\to\hat{\Pi}$ that maps $\Lambda$ into $\hat{\Lambda}$.
So $\tau$ of $\Lambda$ is determined by $\hat{\tau}$ of $\hat{\Lambda}$ and the position of the sublattice $\mathrm{pr}[\Lambda]$ in $\hat{\Lambda}$.
For one node, as in the proof of Lemma~\ref{lem:insert node}, $\hat{\Lambda}/\mathrm{pr}[\Lambda]$ is isomorphic to the image of $(m_1,m_2)\mapsto m_1r_1+m_2r_2$ in $\bbZ/s\bbZ$, a cyclic group of some odd order $m$ dividing $s$.
Then $\mathrm{pr}[\Lambda]$ is spanned by $\hat{w}_1$ and $m\hat{w}_2$ for a suitable basis of $\hat{\Lambda}$, and $\tau=m\hat{\tau}$.

Locally, $\tau$ can be changed arbitrarily within $\calU^g(a)$, including at nodal curves.
Under the hypotheses of Theorem~\ref{thm:Whitham deformation node}, the proof of its part (ii) shows, by~\eqref{eq:dot tau Q}, that $\dd\tau$ is surjective on $T_a\calU^g(a)$, since $Q(0)$ ranges over $\bbC$.
Hence for a fixed triple the set of values of $\tau$ that occur is open.

\bibliographystyle{alpha}
\bibliography{cmc_moduli_gg_g2.bib}

\bigskip
{\small
Ross Ogilvie, Universit\"at Mannheim\\
\emph{Email address:} \href{mailto:r.ogilvie@uni-mannheim.de}{\texttt{r.ogilvie@uni-mannheim.de}}
\par}
\end{document}